\newcommand{\Rmnum}[1]{\expandafter\@slowromancap\romannumeral #1@}
\newcommand{\id}{\operatorname{id}}
\newcommand{\Hom}{\operatorname{Hom}}
\newcommand{\Aut}{\operatorname{Aut}}
\newcommand{\ch}{\operatorname{char}}
\newcommand{\Spec}{\operatorname{Spec}}
\newcommand{\uIsom}{\underline{\operatorname{Isom}}}
\newcommand{\bIsom}{\operatorname{\mathbf{Isom}}}
\newcommand{\Proj}{\operatorname{Proj}}
\newcommand{\InfAut}{\operatorname{InfAut}}
\newcommand{\Def}{\operatorname{Def}}
\newcommand{\Obs}{\operatorname{Obs}}
\newcommand{\Sch}{\text{Sch}}
\newcommand{\Pic}{\text{Pic}}
\newcommand{\etale}{\'{e}tale}
\newcommand{\Mustata}{Musta\cb{t}\u{a}}
\newcommand{\CC}{\mathbb{C}}
\newcommand{\QQ}{\mathbb{Q}}
\newcommand{\ZZ}{\mathbb{Z}}
\newcommand{\PP}{\mathbb{P}}
\newcommand{\bA}{\mathbb{A}}
\newcommand{\sA}{\mathcal{A}}
\newcommand{\sB}{\mathcal{B}}
\newcommand{\sC}{\mathcal{C}}
\newcommand{\sD}{\mathcal{D}}
\newcommand{\sH}{\mathcal{H}}
\newcommand{\sM}{\mathcal{M}}
\newcommand{\sMbar}{\overline{\mathcal{M}}}
\newcommand{\Mbar}{\overline{M}}
\newcommand{\sO}{\mathcal{O}}
\newcommand{\sL}{\mathcal{L}}
\newcommand{\sS}{\mathcal{S}}
\newcommand{\sU}{\mathcal{U}}
\newcommand{\sV}{\mathcal{V}}
\newcommand{\sR}{\mathcal{R}}
\newcommand{\sZ}{\mathcal{Z}}
\newcommand{\gt}{\mathfrak{t}}
\newcommand{\ra}{\rightarrow}
\newcommand{\hra}{\hookrightarrow}
\newcommand{\ol}{\overline}
\newcommand{\abs}[1]{| #1 | }
\newcommand{\ras}[1]{ \stackrel{ #1 }{\ra} }
\newtheorem{theorem}{Theorem}[section]
\newtheorem{lemma}[theorem]{Lemma}
\newtheorem{corollary}[theorem]{Corollary}
\theoremstyle{definition}
\newtheorem{definition}[theorem]{Definition}
\newtheorem{lemma-definition}[theorem]{Lemma-Definition}
\newtheorem{remark}[theorem]{Remark}
\begin{document}

\title[Alternate compactifications of the moduli space of genus one maps]{Alternate compactifications of the moduli space of genus one maps}
\date{}

\author{Michael Viscardi}
\email{viscardi@post.harvard.edu}

\begin{abstract}
We extend the definition of an $m$-stable curve introduced by Smyth to the setting of maps to a projective variety $X$, generalizing the definition of a Kontsevich stable map in genus one.  We prove that the moduli problem of $n$-pointed $m$-stable genus one maps of class $\beta$ is representable by a proper Deligne-Mumford stack $\sMbar_{1,n}^m(X,\beta)$ over $\Spec \ZZ[1/6]$.  For $X = \PP^r$, we explicitly describe all of the irreducible components of $\sMbar_{1,n}(\PP^r,d)$ and $\sMbar_{1,n}^m(\PP^r,d)$, and in particular deduce that $\sMbar_{1,n}^m(\PP^r,d)$ is irreducible for $m \geq \min(r,d) + n$.  We show that $\sMbar_{1,n}^m(\PP^r,d)$ is smooth if $d+n \leq m \leq 5$.
\end{abstract}

\maketitle

\def\thepart{\Roman{part}}

\tableofcontents

\section{Introduction}

\subsection{Background}
Let $\sM_{g,n}$ be the moduli stack of smooth, genus $g$, $n$-pointed curves over an algebraically closed field $k$.  In \cite{DM}, Deligne and Mumford compactified this space by introducing the notion of a stable curve:
\begin{definition}
Let $C$ be a reduced, connected, proper curve of arithmetic genus $g$, and let $p_1, \ldots, p_n \in C$ be smooth, distinct points.  Then $(C, p_1, \ldots, p_n)$ is said to be \emph{stable} if the following conditions hold:
\begin{enumerate}
	\item $C$ has only nodes as singularities.
	\item $C$ has no non-trivial infinitesimal automorphisms identical on $p_1, \ldots, p_n$.
\end{enumerate}
\end{definition}
Deligne and Mumford proved that the stack $\sMbar_{g,n}$ of stable curves, in addition to being proper, is irreducible and smooth over $\Spec \ZZ$.

Motivated by predictions from mirror symmetry on the enumerative geometry of rational curves, Kontsevich extended the definition of a stable curve to the setting of maps into a scheme $X$.  Kontsevich introduced the notion of a stable map \cite{Ko}:
\begin{definition}
Let $C$ be a reduced, connected, proper curve of arithmetic genus $g$, and let $p_1, \ldots, p_n \in C$ be smooth, distinct points.  A map $\mu: C \ra X$ is said to be \emph{stable} if the following conditions hold:
\begin{enumerate}
	\item $C$ has only nodes as singularities.
	\item $\mu$ has no non-trivial infinitesimal automorphisms identical on $p_1, \ldots, p_n$ and $X$.
\end{enumerate}
\end{definition}
For simplicity, we assume that $k = \CC$ whenever discussing stable maps in the remainder of the introduction; see Section \ref{prelim: m-stable maps} for the setup in general.  For any homology class $\beta \in H_2(X,\ZZ)$, let
\[
\sMbar_{g,n}(X,\beta)
\]
\noindent be the stack of stable maps $\mu: C \ra X$ from curves of arithmetic genus $g$ with $n$ marked points to $X$ such that $\mu_*([C]) = \beta$.  For $X$ projective, $\sMbar_{g,n}(X,\beta)$ is a proper Deligne-Mumford stack of finite type.  For $g=0$ and $X$ a smooth convex variety (a class of varieties which includes all homogeneous spaces), $\sMbar_{0,n}(X,\beta)$ is irreducible and smooth.  However, in general $\sMbar_{g,n}(X,\beta)$ is not well-behaved: even when $X$ is projective space, $\sMbar_{g,n}(X,\beta)$ can have multiple singular irreducible components of varying dimensions if $g > 0$.

Recent work of Hassett and others on the minimal model program for $\sMbar_{g,n}$ has led to interest in alternate compactifications of $\sM_{g,n}$.  One goal of this program is to describe the log canonical models of $\sMbar_{g,n}$, defined by
\[
\Mbar_{g,n}(\alpha) := \Proj \oplus_{m \geq 0} H^0( \sMbar_{g,n}, m(K_{\sMbar_{g,n}} + \alpha \delta) ),
\]
\noindent where $\delta$ is the boundary divisor on $\sMbar_{g,n}$ and $0 \leq \alpha \leq 1$ is rational.  A remarkable feature of these spaces is that they are often modular:  that is, they coarsely represent functors defined by moduli problems.  For instance, $\Mbar_{g,0}(9/11)$ is isomorphic to $\sMbar_{g,0}^{ps}$, the space of pseudo-stable curves constructed by Schubert via geometric invariant theory \cite{Sch, HH}.  In the case $g=1$, Smyth has constructed a series of compactifications $\sMbar_{1,n}(m)$ of $\sM_{1,n}$ for every integer $m \geq 1$ using a generalized notion of stability \cite{S1}:
\begin{definition}
Let $C$ be a reduced, connected, proper curve of arithmetic genus one, and let $p_1, \ldots, p_n \in C$ be smooth, distinct points.  Then $(C, p_1, \ldots, p_n)$ is said to be \emph{$m$-stable} if the following conditions hold:
\begin{enumerate}
	\item $C$ has only nodes and elliptic $l$-fold points, $l \leq m$, as singularities (see Definition \ref{intro: elliptic} below).
	\item For any connected subcurve $E \subset C$ of arithmetic genus one,
	\[
	\abs{ \{E \cap \ol{C \backslash E} \} \cup \{p_i: p_i \in E\} } > m.
	\]
	\item $C$ has no non-trivial infinitesimal automorphisms identical on $p_1, \ldots, p_n$.
\end{enumerate}
\end{definition}
The spaces $\sMbar_{1,n}(m)$ are proper, irreducible, Deligne-Mumford stacks over $\Spec \ZZ[1/6]$ (see Remark \ref{remark: extra automorphisms} for an explanation of the restriction $\ch k \neq 2,3$).  After further allowing the marked points $p_i$ to collide suitably \cite{S1, H}, Smyth has shown that every log canonical model $\sMbar_{1,n}(\alpha)$ is isomorphic to the normalization of the coarse moduli space of one of the above compactifications \cite{S2}.

\subsection{Statement of the main results}
It is natural to consider analogues of the above alternate compactifications in the setting of maps to a projective variety $X$.  For any homology class $\beta \in H_2(X, \ZZ)$, let $\sM_{1,n}(X,\beta)$ be the stack of maps from smooth curves of genus one with $n$ marked points to $X$ such that $\mu_*([C]) = \beta$.  In this paper, we construct a series of alternate compactifications of $\sM_{1,n}(X,\beta)$ by introducing the notion of an $m$-stable map:
\begin{definition}
Let $C$ be a reduced, connected, proper curve of arithmetic genus one, and let $p_1, \ldots, p_n \in C$ be smooth, distinct points.  A map $\mu: C \ra X$ is said to be \emph{$m$-stable} if the following conditions hold:
\begin{enumerate}
	\item $C$ has only nodes and elliptic $l$-fold points, $l \leq m$, as singularities.
	\item For any connected subcurve $E \subset C$ of arithmetic genus one on which $\mu$ is constant,
	\[
	\abs{ \{E \cap \ol{C \backslash E} \} \cup \{p_i: p_i \in E\} } > m.
	\]
	\item $\mu$ has no non-trivial infinitesimal automorphisms identical on $p_1, \ldots, p_n$ and $X$.
\end{enumerate}
\end{definition}
The definition of an $m$-stable map of class $\beta$ extends to a moduli functor in the usual way, and the main result of Part I of this paper (Theorem \ref{theorem: construction}) is:
\begin{theorem}
The functor of $m$-stable maps of class $\beta$ is representable by a proper Deligne-Mumford stack $\sMbar_{1,n}^m(X,\beta)$ of finite type.
\end{theorem}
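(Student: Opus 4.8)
The plan is to adapt the Fulton--Pandharipande construction of the space of Kontsevich stable maps, invoking Smyth's theory of $m$-stable curves at the two points where one must allow curves worse than nodal. The first task is boundedness. Fix a projective embedding $X \subset \PP^r$ and set $d = \deg\beta$. For an $m$-stable map $\mu\colon (C,p_1,\dots,p_n)\to X$ of class $\beta$, put $L_\mu := \omega_C(p_1+\dots+p_n)^{\otimes 3}\otimes\mu^*\sO_X(3)$. On a component of $C$ on which $\mu$ is non-constant, the factor $\mu^*\sO_X(3)$ contributes positive degree; on a component on which $\mu$ is constant, conditions (2) and (3) force such a component to carry enough marked points and nodes that $\omega_C(\sum p_i)$ has positive degree there, exactly as in Smyth's proof that $\omega_C(\sum p_i)$ is relatively ample for $m$-stable curves. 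Hence $L_\mu$ is ample, and since $\deg\omega_C = 0$ the quantity $\chi(L_\mu^{\otimes N})$, as a polynomial in $N$, depends only on $(\beta,n)$. Using that elliptic $l$-fold points have $l\le m$ and that the dual graph of a genus-one curve of degree $d$ ranges over a finite set, one finds $N_0 = N_0(X,\beta,n,m)$ so that $L_\mu^{\otimes N}$ is very ample with vanishing higher cohomology for every $N\ge N_0$; fixing such an $N$, its sections together with the graph of $\mu$ embed $C$ into $\PP^M\times\PP^r$ for a fixed $M$.

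Next, let $H$ be the Hilbert scheme of $\PP^M\times\PP^r$ with the resulting constant bidegree Hilbert polynomial, projective over the base, and let $H^\circ\subseteq H$ be the locus of subschemes $C$ that are reduced, connected, of arithmetic genus one with only nodes and elliptic $l$-fold points ($l\le m$) as singularities, whose projection to $\PP^r$ factors through $X$ and identifies $\sO_{\PP^M}(1)|_C$ with the $N$th power of $\omega_C(\sum p_i)^{\otimes 3}\otimes\mu^*\sO_X(3)$ for some $n$ disjoint sections $p_i$ in the smooth locus, and that satisfy the numerical condition (2) and the infinitesimal rigidity (3). Condition (1) is open by the deformation theory of Gorenstein curves, (2) is open since it concerns proper subcurves of a flat family, and (3) is open by upper semicontinuity of $\InfAut$; so $H^\circ$ is locally closed in $H$, hence quasi-projective and of finite type. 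The group $\operatorname{PGL}_{M+1}$ acts on $H^\circ$ with orbits the isomorphism classes of $m$-stable maps, the choices of basis of $H^0(L_\mu^{\otimes N})$ accounting for the ambiguity, so $\sMbar^m_{1,n}(X,\beta) = [H^\circ/\operatorname{PGL}_{M+1}]$ is an algebraic stack of finite type, independent of the choices made. It is Deligne--Mumford: over a field, the stabilizer of a point of $H^\circ$ is the automorphism group scheme of the corresponding $m$-stable map, a closed subgroup scheme of $\operatorname{PGL}_{M+1}$ with vanishing Lie algebra by (3), hence finite and reduced.

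It remains to verify the valuative criteria. Separatedness is the uniqueness of $m$-stable limits: if two $m$-stable maps over a discrete valuation ring $R$ agree over the fraction field $K$, their underlying families of curves are canonically identified by Smyth's uniqueness theorem for $m$-stable curves (into which the hypothesis $\ch\ne 2,3$, built into the notion of elliptic $l$-fold point, enters as in \cite{S1}), and the two maps then agree because a morphism to the separated scheme $X$ is determined on the components where it is non-constant and, being constant on the contracted components, is determined there by its values at the attaching nodes. For existence of limits, start with an $m$-stable map over $\Spec K$ with $R$ complete: resolve the elliptic $l$-fold point of the generic fibre, when present, into a minimal genus-one subcurve, on which $\mu$ is then constant, and add auxiliary marked points to produce a Kontsevich stable map over $\Spec K$; extend it, after a finite base change, to a stable map over $\Spec R$ by properness of $\sMbar_{1,n'}(X,\beta)$; then contract, in the special fibre of this stable limit, the genus-one subcurve prescribed by Smyth's $m$-stable reduction. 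Since $\mu$ is constant on that subcurve (no contraction being needed when $\mu$ is non-constant on the genus-one core, condition (2) then being vacuous), the map descends along the contraction, and forgetting the auxiliary marked points yields an $m$-stable map over $\Spec R$ extending the given one.

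The crux is this last step, a stable reduction theorem for $m$-stable maps: one must check that Smyth's contraction, carried out on the total space in the presence of $\mu$, still produces a flat family whose special fibre is a genus-one curve with a single elliptic $l'$-fold point, $l'\le m$, and that $\mu$ genuinely factors through the contraction. Once this and the boundedness of the first step are established, the remainder is a routine transcription of the Fulton--Pandharipande construction.
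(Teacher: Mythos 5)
Your overall architecture (polarize, embed via a Hilbert scheme of $\PP^M\times\PP^r$, quotient by $\operatorname{PGL}$, get Deligne--Mumford from finiteness of automorphisms, then check the valuative criteria) matches the paper's, but the two places where you lean on Smyth's results both have genuine gaps. For separatedness you invoke ``Smyth's uniqueness theorem for $m$-stable curves'' applied to the underlying families of curves; but the underlying pointed curve of an $m$-stable \emph{map} is not in general an $m$-stable \emph{curve} -- conditions (2) and (3) are only imposed on subcurves/components where $\mu$ is constant, so e.g.\ a rational component with two distinguished points on which $\mu$ is non-constant is allowed, and a genus-one core with few distinguished points is allowed when $\mu$ is non-constant on it. The paper repairs this (Lemma \ref{lemma: reduction}) by pulling back $m+1$ transverse hyperplanes to add $d(m+1)$ auxiliary smooth sections, after which the pointed curves \emph{are} $m$-stable and Smyth's separatedness applies; you use this device in the existence step but omit it exactly where it is needed. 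The same device is also what makes condition (2) deformation-open, which you assert without argument when carving $H^\circ$ out of the Hilbert scheme.

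The existence argument has two further problems. First, your claim that when the generic fibre has an elliptic $l$-fold point the map ``is then constant'' on the genus-one core is false (e.g.\ three lines through a planar triple point in $\PP^2$), and ``resolving'' the elliptic point does not produce a genus-one nodal curve, so your route to a Kontsevich-stable family over $K$ does not go through. The paper instead first splits off the rational tails (where Kontsevich properness applies) and then shows that any $m$-stable map from a minimal elliptic curve is \emph{smoothable} -- via $H^1(C,\mu^*\sO(1))=0$, using $\omega_C\cong\sO_C$ -- which reduces everything to families with smooth generic fibre, where nodal reduction is available. Second, a single contraction of the genus-one core of the Kontsevich limit does not suffice: after contracting, the new minimal elliptic subcurve may again be contracted by $\mu$ with too few distinguished points, so one needs Smyth's full iterated blow-up/contraction process, together with a termination argument (either the level eventually exceeds $m$, or $\mu$ eventually becomes non-constant on the core because $d>0$) and, crucially, control of the regularity of the total space at the nodes being contracted -- which is why the paper works with the nodal reduction with smooth total space rather than with an arbitrary stable limit over $\Spec R$, whose total space may have $A_k$ singularities at those nodes. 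You correctly identify this step as ``the crux,'' but it is precisely the content that must be supplied, not just transcribed from Fulton--Pandharipande.
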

In fact, we prove this over any algebraically closed field $k$ with $\ch k \neq 2,3$, but need an additional condition in the definition of $m$-stability (condition (4) in Definition \ref{def: m-stable map}); the same issue arises with Kontsevich's spaces of stable maps for $\ch k > 0$.  The proof of Theorem \ref{theorem: construction} is fairly standard except for the valuative criterion for properness, which we show using a modification of the method in \cite{S1}.  Given a 1-parameter family of $m$-stable curves with smooth generic fiber, Smyth considers the semi-stable reduction of the family, then performs a prescribed sequence of blow-ups and contractions on the central fiber until it eventually becomes $m$-stable.  In order to apply an analogue of semi-stable reduction in our setting, we must first reduce to the case of families with smooth domain in the generic fiber; this extra step is necessary because $\sMbar_{1,n}^m(X,\beta)$ is reducible in general.  This step is also slightly more subtle than in the Kontsevich-stable case (as carried out in, e.g., \cite{FP}) since it is less clear how to normalize families of curves with elliptic $m$-fold points.  We then apply Smyth's sequence of blow-ups and contractions on the domain of the central fiber; however, since we cannot contract any component on which the map is non-constant, we potentially have to terminate the process earlier than in \cite{S1}.

In Part II, we study the irreducible components and smoothness of $\sMbar_{1,n}^m(X,\beta)$ in the case $X = \PP^r$.  As mentioned above, $\sMbar_{1,n}(X,\beta)$ can have multiple irreducible components even for $X = \PP^r$ and $\beta = d[\text{line}]$ (abbreviated $\beta = d$).  In the case $(X,\beta) = (\PP^r,d)$, these can be divided into a principal component whose generic element is a map with smooth domain (this locus is reducible in general for $g > 1$) and a number of ``extra'' components.  In Section \ref{section: irreducible components}, we show that the spaces of $m$-stable maps gradually ``cut away'' the extra components of $\sMbar_{1,n}(\PP^r,d)$ as $m$ increases, and eventually become irreducible:
\begin{theorem}
For $m \geq \min(r,d) + n$, $\sMbar_{1,n}^m(\PP^r,d)$ is irreducible.
\end{theorem}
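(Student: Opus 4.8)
The plan is to analyze the irreducible components of $\sMbar_{1,n}^m(\PP^r,d)$ by understanding which ``extra'' components of $\sMbar_{1,n}(\PP^r,d)$ survive the passage to $m$-stability, and then to show that for $m$ large enough only the principal component remains. First I would describe all irreducible components of $\sMbar_{1,n}(\PP^r,d)$ explicitly (this is the content of Section~\ref{section: irreducible components}, which I may assume): a general point of an extra component is a map whose domain is a nodal union of an arithmetic-genus-one subcurve $E$ (a ``Vine curve'' or nodal/cuspidal cubic, or a smooth elliptic curve) contracted to a point of $\PP^r$, together with a tree of $\PP^1$'s attached at $E$ carrying the whole degree $d$; the component is indexed by the number $k$ of points of attachment (plus marked points) lying on $E$, together with the distribution of degree and marked points among the branches, and the dimension of such a component grows as $k$ decreases (fewer constraints on the contracted elliptic tail). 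Each such component requires $E$ to meet the rest of the curve, together with its marked points, in exactly $k$ points.

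Next I would invoke the defining condition~(2) of $m$-stability: any connected genus-one subcurve $E$ on which $\mu$ is constant must satisfy $|\{E\cap\ol{C\bs E}\}\cup\{p_i\in E\}|>m$. An extra component indexed by $k$ points of attachment can therefore only appear in $\sMbar_{1,n}^m(\PP^r,d)$ if $k>m$; equivalently, once $m\ge k$, every map in that component becomes unstable and must be replaced (via the blow-up/contraction procedure from the proof of Theorem~\ref{theorem: construction}) by a map lying in the closure of the principal component or of a component with more points of attachment. So I must show: (i) every extra component of $\sMbar_{1,n}(\PP^r,d)$ has $k\le \min(r,d)+n$, and (ii) for $m\ge\min(r,d)+n$ the locus of $m$-stable maps whose domain is \emph{not} generically smooth lies in the closure of the principal component. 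For (i), the key bound is that a tree of $\PP^1$'s carrying total degree $d$ and attached to $E$ at $k$ nodes, with the attaching monomials spanning enough of $\PP^r$, forces $k\le d$ trivially (each branch has degree $\ge 1$, except branches carrying only marked points, and those contribute at most $n$ to the count) and $k\le r+$something by a linear-algebra/convexity argument on where the branches can attach; combining gives $k\le\min(r,d)+n$. The marked-point contribution is exactly the $+n$.

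For the irreducibility conclusion itself, once (i) is established I would argue that $\sMbar_{1,n}^m(\PP^r,d)$ for $m\ge\min(r,d)+n$ is covered by: the principal component, plus components whose general map still has a contracted genus-one subcurve $E$ but now with $>m\ge\min(r,d)+n$ special points on $E$ — and I would show no such component exists, because having more than $\min(r,d)+n$ branches of positive degree (or marked points) attached to a single contracted elliptic curve is incompatible with the total degree being $d$ and the target being $\PP^r$, by the same bound as in~(i) run in reverse. Hence the non-principal locus is empty as a component, i.e.\ it is contained in the closure of the principal component, which is itself irreducible (it is dominated by the irreducible space of maps from a fixed smooth genus-one curve, or more precisely by $\sMbar_{1,n}(\PP^r,d)$'s principal component, whose irreducibility is part of the structural description). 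Therefore $\sMbar_{1,n}^m(\PP^r,d)$ is irreducible.

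The main obstacle I anticipate is step~(ii): controlling the limits. It is not enough to know the extra components disappear set-theoretically; I must verify that the $m$-stable \emph{reduction} of a family degenerating into (the old) extra locus actually lands in the closure of the principal component rather than producing a new component of the same dimension. This requires tracking Smyth's contraction procedure carefully — when the $k\le m$ special points on $E$ force $E$ to be contracted to an elliptic $l$-fold point with $l=k$, one must check that the resulting stable map deforms to one with smooth domain, i.e.\ that the elliptic $m$-fold singularity smooths out within the moduli space while keeping the map fixed to first order. This smoothing statement, together with a dimension count showing the locus of such degenerate maps has codimension $\ge 1$ in the principal component (so it cannot be its own component), is the technical heart of the argument and will likely use the deformation theory of elliptic $l$-fold points together with the explicit description of the principal component's tangent space.
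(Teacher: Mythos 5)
Your overall strategy is the paper's: enumerate the irreducible components explicitly, observe that the extra ones are indexed by configurations with $k$ positive-degree tails and $j$ marked points on a contracted genus-one subcurve subject to $k \le \min(r,d)$ and $k+j > m$, and conclude that for $m \ge \min(r,d)+n$ no such configuration survives. Your bounds in step (i) are the right ones: $k \le d$ because each attached tree must carry positive degree (else it would be absorbed into the maximal contracted genus-one subcurve), and $k \le r$ because the generic point of such a component has linearly independent tangent directions at $\mu(Z)$, of which there can be at most $r$.

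The gap is in your step (ii), and it is not the obstacle you anticipate. When $\min(r,d)=r<d$ and $r+n\le m<d+n$, $m$-stable maps contracting the minimal elliptic subcurve $Z$ do still exist: your claim that more than $\min(r,d)+n$ special points on a contracted subcurve $E$ is ``incompatible with \dots the target being $\PP^r$'' is false, since nothing prevents attaching more than $r$ branches at a single point of $\PP^r$. What is true is that more than $r$ tangent vectors in the $r$-dimensional space $T_{\mu(E)}\PP^r$ are automatically linearly dependent, and the paper's Lemma \ref{lemma: smoothing the elliptic m-fold point} (proved by first smoothing $Z$ itself, which is harmless since $\mu$ is constant there, and then running a limit-linear-series argument on the resulting compact-type nodal curve) shows that linear dependence of those tangent vectors forces the map to be smoothable, hence to lie in the principal component. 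Together with Lemma \ref{main component} (if $\mu$ is non-constant on $Z$ it is smoothable, via $H^1(C,\mu^*\sO_X(1))=0$ and cohomology and base change), this is exactly what closes the argument once the extra open loci are empty; conflating ``the extra loci are empty'' with ``every $m$-stable map lies in $S_0$'' skips this. Finally, your proposed route for controlling limits --- tracking the blow-up/contraction procedure applied to degenerating families --- presupposes a stable-reduction morphism from the Kontsevich space to $\sMbar_{1,n}^m(\PP^r,d)$, which is neither constructed in this paper nor needed: the component analysis is purely pointwise (which maps are smoothable, via Lemmas \ref{main component}, \ref{lemma: smoothing the elliptic m-fold point}, and \ref{extra components}) and does not invoke the properness argument at all.
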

We actually obtain a much more precise result:  we give explicit descriptions of all of the irreducible components of the spaces $\sMbar_{1,n}(\PP^r,d)$ and $\sMbar_{1,n}^m(\PP^r,d)$ in Theorem \ref{theorem: irreducible components, n>0}.  The main result which makes this possible is an explicit description of the principal component of $\sMbar_{1,n}^m(\PP^r,d)$.  Note that such a description was stated in the case of genus one Kontsevich-stable maps in \cite{VZ1}.

Given that $\sMbar_{1,n}^m(\PP^r,d)$ is irreducible for $m \geq \min(r,d) + n$, we naturally ask if it is smooth.  In most cases the answer is no, but in Section \ref{section: smoothness} we show that the answer is yes if $d+n \leq m \leq 5$.  In order to describe this in more detail, we first go back and give the definition of an elliptic $m$-fold point introduced by Smyth:
\begin{definition}
\label{intro: elliptic}
We say that $p$ is an \emph{elliptic $m$-fold point} of a curve $C$ if
\[
\hat{\sO}_{C,p} \cong
\begin{cases}
k[[x,y]]/(y^2-x^3) 							& m=1 			\ \ \text{(ordinary cusp)} 				\\
k[[x,y]]/(y^2-x^2 y) 						& m=2 			\ \ \text{(ordinary tacnode)}			\\
k[[x,y]]/(x^2 y-x y^2) 					& m=3				\ \ \text{(planar triple point)}  \\
k[[x_1, \ldots, x_{m-1}]]/I_m		& m \geq 4, \, \text{($m$ general lines through the origin in $\bA^{m-1}$)},
\end{cases}
\]
\[
\hspace{-75pt} I_m := (x_h x_i - x_h x_j: i,j,h \in \{ 1, \ldots, m-1 \} \text{ distinct}).
\]
\end{definition}
We will need a result on the deformation theory of the elliptic $m$-fold point proved by Smyth in \cite{S2}: namely, that deformations of the elliptic $m$-fold point are unobstructed if and only if $m \leq 5$.  The local structure of $\sMbar_{1,n}(m)$ at a point $[C]$ is governed by the miniversal deformation space of $C$, and the conclusion is that $\sMbar_{1,n}(m)$ is smooth if and only if $m \leq 5$.  In the setting of $m$-stable maps, the key fact is that for $m \geq d+n$, an $m$-stable map cannot be constant on any genus one subcurve of the domain (Lemma \ref{lemma: stabilization}).  A simple cohomological argument then gives that deformations of $m$-stable maps to $\PP^r$ are trivially unobstructed for $m \leq 5$, and the conclusion (Theorem \ref{theorem: smoothness}) is:
\begin{theorem}
The stack $\sMbar_{1,n}^m(\PP^r,d)$ is smooth if $d+n \leq m \leq 5$.
\end{theorem}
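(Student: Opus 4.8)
The plan is to prove smoothness by verifying that the deformation functor of an $m$-stable map to $\PP^r$ is unobstructed at every geometric point of $\sMbar_{1,n}^m(\PP^r,d)$; since the stack is of finite type over the base, vanishing of all obstruction spaces yields formal smoothness and hence smoothness. Fix a geometric point $[\mu\colon C\to\PP^r,\,p_1,\dots,p_n]$ and consider the forgetful morphism to the moduli stack of $n$-pointed arithmetic genus one curves whose only singularities are nodes and elliptic $l$-fold points with $l\le m$. The relative deformation theory of this morphism is governed by $R\Gamma(C,\mu^*T_{\PP^r})$, with tangent space $H^0(C,\mu^*T_{\PP^r})$ and obstruction space $H^1(C,\mu^*T_{\PP^r})$, and since $C$ is one-dimensional we have $H^2(C,\mu^*T_{\PP^r})=0$. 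The standard long exact sequence comparing the total, relative, and base deformation theories then gives an exact sequence $H^1(C,\mu^*T_{\PP^r})\to\Obs(\mu,C,p_i)\to\Obs(C,p_i)\to 0$, where $\Obs(C,p_i)$ is the obstruction space of $(C,p_1,\dots,p_n)$ as a pointed curve. It therefore suffices to establish the two vanishings $\Obs(C,p_i)=0$ and $H^1(C,\mu^*T_{\PP^r})=0$.

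The first vanishing is where the bound $m\le 5$ is used. The singularities of $C$ are nodes, which are local complete intersections and impose no obstructions, together with a single elliptic $l$-fold point with $l\le m\le 5$; by Smyth's analysis of the deformations of the elliptic $l$-fold point in \cite{S2}, this singularity is unobstructed exactly when $l\le 5$. Since the local obstruction sheaves (and the local $\mathrm{Ext}^1$ sheaves) are skyscrapers supported at the singular points and hence have no higher cohomology, a local-to-global spectral sequence argument upgrades this to $\Obs(C,p_i)=0$.

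The second vanishing is where the bound $d+n\le m$ is used. By Lemma \ref{lemma: stabilization}, the hypothesis $m\ge d+n$ forces $\mu$ to be non-constant on every connected genus one subcurve of $C$, in particular on the elliptic core $Z$, i.e.\ the minimal subcurve of arithmetic genus one. Write $\mathcal{F}:=\mu^*T_{\PP^r}$; by the Euler sequence $\mathcal{F}$ is globally generated, so on every rational component it is a sum of line bundles of non-negative degree. I first peel off the rational tails of $C$, one leaf $R\cong\PP^1$ at a time: writing $C'=\overline{C\setminus R}$ and $q=C'\cap R$, the sequence $0\to\mathcal{F}\to\mathcal{F}|_{C'}\oplus\mathcal{F}|_R\to\mathcal{F}|_q\to 0$ has $H^1(R,\mathcal{F}|_R)=0$ and $H^0(R,\mathcal{F}|_R)\to\mathcal{F}|_q$ surjective (both since $\mathcal{F}|_R$ is non-negative on $\PP^1$), so $H^1(C',\mathcal{F}|_{C'})=0$ implies $H^1(C,\mathcal{F})=0$; iterating reduces the problem to $H^1(Z,\mathcal{F}|_Z)=0$. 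On $Z$ the Euler sequence and $H^2(Z,\sO_Z)=0$ reduce this to $H^1(Z,L)=0$ for the line bundle $L:=\mu^*\sO_{\PP^r}(1)|_Z$, and since the dualizing sheaf of the elliptic core is trivial, Serre duality identifies $H^1(Z,L)$ with $H^0(Z,L^{-1})^*$. As $\mu$ is non-constant on $Z$, the bundle $L^{-1}$ has non-positive degree on each component of $Z$ and strictly negative degree on at least one; a global section therefore vanishes on the negative components and is constant on the remaining ones, and propagating this along the nodes of the connected curve $Z$ forces it to vanish identically. Hence $H^1(Z,L)=0$ and the second vanishing follows.

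I expect the main obstacle to be the cohomology computation on the core $Z$ in the second step: treating the reducible cases (a cycle of $\PP^1$'s, or $m$ concurrent lines at an elliptic $m$-fold point) requires the triviality of its dualizing sheaf together with the section-propagation argument run carefully through its dual graph, and one must track the reduction of $C$ to $Z$ without losing control of the rational tails or of the stability condition on contracted rational components. The use of Lemma \ref{lemma: stabilization} is essential here: without the hypothesis $d+n\le m$ the map $\mu$ could be constant on $Z$, in which case $\mathcal{F}|_Z$ contains a trivial summand, $H^1(C,\mathcal{F})\ne 0$ in general, and an obstruction can genuinely appear.
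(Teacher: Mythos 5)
Your proposal is correct and follows essentially the same route as the paper: the tangent--obstruction exact sequence splits the problem into $\Obs(C)=0$ (Smyth's unobstructedness of the elliptic $l$-fold point for $l\le 5$) and $H^1(C,\mu^*T_{\PP^r})=0$ (Euler sequence combined with the non-constancy on the minimal elliptic subcurve from Lemma \ref{lemma: stabilization}). The only cosmetic difference is that the paper obtains the cohomology vanishing directly on $C$ via Serre duality with $\omega_C$ and the fundamental decomposition (the proof of Lemma \ref{main component}), whereas you first peel off the rational tails to reduce to the minimal elliptic subcurve $Z$, where $\omega_Z\cong\sO_Z$; both computations are valid.
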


Of course, these are rather restrictive conditions for smoothness.  However, we note that Smyth has proven in \cite{S2} that the normalizations of the coarse moduli spaces of $n$-pointed $m$-stable curves are projective and $\QQ$-factorial, and is therefore able to carry out intersection theory with $\QQ$-coefficients on these normalized coarse moduli spaces.  We expect to prove analogous results for our spaces in future work.

\subsection{Outline of the paper}
In Section \ref{section: prelim} we summarize some necessary facts from \cite{S1} and reformulate the condition that a map have no non-trivial infinitesimal automorphisms in terms of distinguished points on irreducible components of the domain curve.  We then define $m$-stable maps of class $\beta$ over any algebraically closed field $k$ with $\ch k \neq 2, 3$.  In Section \ref{section: construction} we prove that the space of $m$-stable maps $\sMbar_{1,n}^m(X,\beta)$ is a Deligne-Mumford stack and prove the valuative criterion for properness.  After this point in the paper, we only consider the case $(X,\beta) = (\PP^r,d)$.  In Section \ref{section: stabilization} we observe that $\sMbar_{1,n}^m(\PP^r,d)$ stabilizes for $m \geq d+n$.  In Section \ref{section: irreducible components} we explicitly describe the irreducible components of $\sMbar_{1,n}(\PP^r,d)$ and $\sMbar_{1,n}^m(\PP^r,d)$, and obtain as a corollary that $\sMbar_{1,n}^m(X,\beta)$ is irreducible for $m \geq \min(r,d)+n$.  In Section \ref{section: smoothness} we prove that the limiting space $\sMbar_{1,n}^{d+n}(\PP^r,d)$ is smooth if $d+n \leq 5$.  Finally, in Section \ref{section: plane cubics} we illustrate our results in the example of spaces of plane cubics $\sMbar_{1,0}^m(\PP^2,3)$.  The Kontsevich space is well-understood in this example, and we give a concrete description of how this space ``evolves'' as $m$ increases.

\subsection{Future directions and related work}

\subsubsection{Higher genera}
As described in the introduction to \cite{S1}, the construction of $\sMbar_{1,n}(m)$ does not directly extend to higher genera.  See \cite{S3} for an approach in all genera which allows a much wider array of singularities.  We simply note here that in the case $m=1$, the space $\sMbar_{g,n}^1(X,\beta)$ with $\beta \neq 0$ should exist in all genera including $g=2$: the example causing non-separatedness in the case of curves is no longer an issue because the map must be non-constant on at least one of the genus one subcurves.  It would be interesting to see if the resulting Gromov-Witten theory yields new information (see the next remark).

\subsubsection{Intersection theory and enumerative geometry}
In future work, we expect to show that the space $\sMbar_{1,n}^m(X,\beta)$ always carries a virtual fundamental class.  We expect that the resulting primary Gromov-Witten invariants will agree with the classical ones, but that the gravitational descendants will yield new information (i.e.\ will not be expressible in terms of the classical Gromov-Witten invariants of $X$).

\subsubsection{Projectivity of the coarse moduli space}
It is known that the coarse moduli spaces of $m$-stable curves are projective \cite{S2}.  By general theory, the coarse moduli spaces of $m$-stable maps exist as proper algebraic spaces \cite{A}.  In future work, we expect to be able to use the methods of \cite{FP} and \cite{K1} to show projectivity of these spaces.

\subsubsection{Other compactifications of $\sM_{g,n}(X,\beta)$}
Recently there have been a remarkable number of new compactifications of $\sM_{g,n}(X,\beta)$.  We outline five of these constructions here and compare them, in rather rough terms, to the one given in this paper.  All of the constructions below exist as proper Deligne-Mumford stacks.  \\

\noindent (1) \emph{Weighted stable maps: $\sMbar_{g,\sA}(X,\beta)$} (Alexeev and Guy \cite{AG}/Bayer and Manin \cite{BaM}/\Mustata \ and \Mustata \ \cite{MM}):  \\

These papers extend Hassett's weighted stability condition \cite{H} to the setting of maps.  It is certainly possible to merge these results with ours to obtain spaces $\sMbar_{1,\sA}^m(X,\beta)$ of weighted $m$-stable maps.  Given the results of \cite{S2}, we expect the normalized principal components of the corresponding coarse moduli spaces to be log canonical models of the principal component of $\sMbar_{1,n}(X,\beta)$.  \\

\noindent (2) \emph{Desingularization of the principal component of $\sMbar_{1,n}(\PP^r,d)$: $\widetilde{\sM}_{1,n}^0(\PP^r,d)$} (Vakil and Zinger \cite{VZ1, VZ2}):  \\

These papers give an explicit desingularization of the principal component of $\sMbar_{1,n}(\PP^r,d)$.  This is achieved through a sequence of blow-ups along the intersection of the principal component and collections of the remaining irreducible components, and has the same effect as our spaces of cutting away irreducible components.  Given the results of \cite{S2}, we expect that our construction is instead achieved by a sequence of birational contractions along the same intersections.  The spaces $\widetilde{\sM}_{1,n}(\PP^r,d)$ are not known to have modular interpretations, although they have many of the properties needed for applications to Gromov-Witten theory.  \\

\noindent (3) \emph{Stable ramified maps: $\sMbar_{g,n,\mu}(X,\beta)$} (Kim, Kresch, and Oh \cite{KKO}):  \\

This paper considers a space of maps from smooth curves to $X$ with prescribed ramification indices at the marked points.  As the marked points approach one another, the map degenerates to a map from a nodal curve to a fiber of the ``universal family'' over a Fulton-MacPherson configuration space of distinct ordered points on $X$.  It does not seem to be known how to relate this to the other compactifications.  \\

\noindent (4) \emph{Logarithmic stable maps: $\Mbar_{g,n}^{\text{log}}(\sU/\sB)$} (Kim \cite{Ki}):  \\

This paper defines stable maps in the setting of logarithmic geometry.  We simply point out one byproduct:  the construction of a smooth modular compactification of $\sM_{1,n}(\PP^r,d)$.  One notable feature of this stack is that only parametrizes (log) maps which are non-constant on any genus one subcurve of the domain.  It is not known how this space relates to Vakil and Zinger's desingularization.  \\

\noindent (5) \emph{Stable quotients: $\ol{Q}_1(\PP^r,d)$} (Marian, Oprea, and Pandharipande \cite{MOP}):  \\

This paper defines a space of stable quotients of a rank $n$ trivial sheaf on stable curves.  One byproduct is the construction of a smooth modular compactification of $\sM_{1,0}(\PP^r,d)$.  It may be interesting to relate this to our spaces and Kim's spaces.  \\

\subsection{Notation}
All curves and varieties are defined over an algebraically closed field $k$.  A \emph{curve} is a reduced connected one-dimensional scheme of finite type over $k$.  The \emph{genus} of a curve is always its arithmetic genus.  An \emph{$n$-pointed curve} is a curve $C$ together with $n$ distinct smooth ordered points $p_1, \ldots, p_n \in C$.  A curve is \emph{Gorenstein} if its dualizing sheaf is invertible.  A \emph{projective variety} $X$ is always equipped with a given very ample line bundle $\sO_X(1)$.  A \emph{map}, unless otherwise noted, is a regular morphism of varieties or schemes.  For any irreducible component $F$ of a pointed curve $(C, p_1, \ldots, p_n)$, we say that a point on $F$ is \emph{distinguished} if it is either a singular or marked point of $C$, i.e.\ the set of distinguished points of $F$ is
\[
\{F \cap \ol{C \backslash F} \} \cup \{p_i: p_i \in F\}.
\]
$\Delta$ always denotes an open disc when $k = \CC$, or $\Spec(A)$ with $A$ a discrete valuation ring with residue field $k$ in general; 0 and $\eta$ denote the closed and generic points of $\Delta$.  \\

\noindent \textbf{Acknowledgments.}
I would like to thank my adviser, Prof.\ Joe Harris, for his invaluable support, guidance, discussions, and humor throughout the course of this work, and David Smyth and Frederick van der Wyck for many helpful discussions.

\newpage

\part{Moduli spaces of $m$-stable maps}

\section{Preliminaries}
\label{section: prelim}

In Section \ref{prelim: m-stable curves}, we summarize some facts from \cite{S1} that we will need.  In Section \ref{prelim: m-stable maps}, we reformulate the condition that a map have no non-trivial infinitesimal automorphisms in terms of distinguished points (Lemma \ref{lemma: map automorphisms}), define $m$-stable maps (Definition \ref{def: m-stable map}), and define $m$-stable maps of class $\beta$ (Definition \ref{def: class beta}).

\subsection{$m$-stable curves}
\label{prelim: m-stable curves}

\subsubsection{Elliptic $m$-fold points}
\label{prelim: m}
In \cite{S1}, Smyth introduced the following sequence of curve singularities:
\begin{definition}
\label{def: elliptic}
We say that $p$ is an \emph{elliptic $m$-fold point} of a curve $C$ if
\[
\hat{\sO}_{C,p} \cong
\begin{cases}
k[[x,y]]/(y^2-x^3) 							& m=1 			\ \ \text{(ordinary cusp)} 				\\
k[[x,y]]/(y^2-x^2 y) 						& m=2 			\ \ \text{(ordinary tacnode)}			\\
k[[x,y]]/(x^2 y-x y^2) 					& m=3				\ \ \text{(planar triple point)}  \\
k[[x_1, \ldots, x_{m-1}]]/I_m		& m \geq 4, \, \text{($m$ general lines through the origin in $\bA^{m-1}$)},
\end{cases}
\]
\[
\hspace{-75pt} I_m := (x_h x_i - x_h x_j: i,j,h \in \{ 1, \ldots, m-1 \} \text{ distinct}).
\]
\end{definition}

A key fact about these singularities is that they are \emph{Gorenstein}; that is, any curve containing only these singularities has invertible dualizing sheaf.  More specifically, we have the following result:
\begin{lemma}[\cite{S1}, Proposition 2.5]
\label{lemma: Gorenstein}
Let $C$ be a curve with an elliptic $m$-fold point $p \in C$, and let $\tilde{C} \ra C$ be the normalization of $C$ at $p$.  Then
\begin{enumerate}
	\item $\omega_C$ is invertible near $p$, i.e.\ the elliptic $m$-fold point is Gorenstein.
	\item $\pi^* \omega_C = \omega_{\tilde{C}}(2p_1 + \ldots + 2p_m)$.
\end{enumerate}
\end{lemma}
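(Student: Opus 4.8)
The plan is to compute the dualizing sheaf directly via Rosenlicht's description of $\omega_C$ for a reduced curve. Recall that if $\pi\colon \tilde{C}\to C$ is the normalization, then $\omega_C$ is identified with the sheaf of \emph{regular differentials}: those meromorphic differentials $\eta$ on $\tilde{C}$ such that for every $q\in C$ and every $f\in\sO_{C,q}$ one has $\sum_{s\mapsto q}\res_s(f\eta)=0$. Since $\pi$ is an isomorphism away from $p$, both assertions are local at $p$; I would therefore pass to $\hat{\sO}_{C,p}$ and, for $m\geq 2$, to the $m$ smooth branches through $p$ with uniformizers $t_1,\ldots,t_m$ on $\tilde{C}$, writing a meromorphic differential as $\eta=(\phi_i(t_i)\,dt_i)_i$ and denoting by $c_{-j}^{(i)}$ the coefficient of $t_i^{-j}$ in $\phi_i$.

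First I would pin down the embedding $\hat{\sO}_{C,p}\hookrightarrow\prod_i k[[t_i]]$. Using the explicit equations in Definition \ref{def: elliptic}, I would show that for $m\geq 2$ the subring $\hat{\sO}_{C,p}$ consists exactly of tuples $(f_1,\ldots,f_m)$ with a common constant term $f_1(0)=\cdots=f_m(0)$ together with a single linear relation $\sum_i\lambda_i f_i'(0)=0$ among the first derivatives, where every $\lambda_i\neq 0$; the case $m=1$ is the cusp $k[[t^2,t^3]]\subset k[[t]]$. This is the one structural input that uses the genericity of the $m$ lines (it is precisely what forces each $\lambda_i$ to be nonzero, and it is where the $m=2,3$ plane-curve presentations must be reconciled with the $m\geq 4$ case), and I expect it to be the main obstacle. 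It immediately gives $\dim_k(\tilde{\sO}/\sO)=(m-1)+1=m$.

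I would then feed this into the residue condition. Testing against the functions $(0,\ldots,t_i^k,\ldots,0)\in\hat{\sO}_{C,p}$ with $k\geq 2$ forces each $\phi_i$ to have a pole of order at most $2$; testing against the common-constant direction and the first-derivative relation then forces $\sum_i c_{-1}^{(i)}=0$ and $(c_{-2}^{(i)})_i=\mu(\lambda_i)_i$ for a single scalar $\mu$. For part (1), I would exhibit the candidate generator $\eta_0=(\lambda_i t_i^{-2}\,dt_i)_i$ and verify directly that multiplication by any $f\in\hat{\sO}_{C,p}$ carries $\eta_0$ into $\omega_C$, and conversely that every element of $\omega_C$ is $f\eta_0$ for a unique $f\in\hat{\sO}_{C,p}$ (recovering the matching constant from $\mu$ and the derivative relation from $\sum c_{-1}^{(i)}=0$). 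Hence $\omega_C=\hat{\sO}_{C,p}\cdot\eta_0$ is free of rank one, so $p$ is Gorenstein.

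For part (2), since $\omega_C$ is generated over $\hat{\sO}_{C,p}$ by $\eta_0$, its pullback $\pi^*\omega_C$ is the $\sO_{\tilde{C}}$-module generated by $\eta_0$. As each $\lambda_i$ is a unit on its branch, $\prod_i k[[t_i]]\cdot\eta_0=\prod_i t_i^{-2}k[[t_i]]\,dt_i$, which is exactly the sheaf of forms with a pole of order at most $2$ at each $p_i$, i.e.\ $\omega_{\tilde{C}}(2p_1+\cdots+2p_m)$ locally (using $\omega_{\tilde{C}}=\Omega^1_{\tilde{C}}$ on the smooth curve $\tilde{C}$). The cusp $m=1$ runs through the identical computation with a single branch, yielding $\omega_C=k[[t^2,t^3]]\cdot t^{-2}dt$ and $\pi^*\omega_C=\omega_{\tilde{C}}(2p_1)$, in agreement with the stated formula. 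As a consistency check one recovers the conductor and the identity $\dim_k(\tilde{\sO}/\mathfrak{c})=2m$, the numerical Gorenstein criterion, but the regular-differential computation already delivers both parts simultaneously.
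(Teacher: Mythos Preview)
The paper does not supply its own proof of this lemma; it is quoted verbatim from \cite{S1}, Proposition~2.5, and the argument is deferred there. Your approach via Rosenlicht's description of $\omega_C$ as the sheaf of regular differentials is exactly the standard one for this kind of statement, and your computation is correct: the identification of $\hat\sO_{C,p}\subset\prod_i k[[t_i]]$ as tuples with common constant term and a single nondegenerate linear relation $\sum_i\lambda_i f_i'(0)=0$ is the key structural fact, and once you have it the residue tests you describe force the generator $\eta_0=(\lambda_i t_i^{-2}\,dt_i)_i$ and give both (1) and (2) at once. The cusp case $m=1$ is handled uniformly. One small point worth stating explicitly in the write-up is why the subring is not \emph{smaller} than you claim, i.e.\ why there are no constraints on the order-$\geq 2$ jets; this amounts to checking that the conductor is $\prod_i t_i^2 k[[t_i]]$, which you allude to in the closing numerical check but do not quite prove directly. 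Since the paper gives no argument to compare against, your proposal stands on its own as a complete and natural proof.
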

A second local property of these singularities which we will need is that they are \emph{smoothable}:
\begin{lemma}[\cite{S1}, proof of Proposition 2.11]
\label{lemma: elliptic m-fold point is smoothable}
The elliptic $m$-fold point is smoothable.
\end{lemma}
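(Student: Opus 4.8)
The plan is to exhibit, for each $m$, an explicit one-parameter flat family of curves whose special fiber carries an elliptic $m$-fold point at a point $v$ and whose general fiber is smooth near $v$. It is convenient to split into the planar cases $m \le 2$ and the general case $m \ge 3$. The genus-one nature of the singularity (Lemma \ref{lemma: Gorenstein}) suggests that the smoothings should ``come from'' genuine elliptic curves, and this is precisely what the construction below realizes.

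For $m \ge 3$ I would realize the elliptic $m$-fold point as a hyperplane section through the vertex of a cone over an elliptic normal curve. Fix a smooth genus-one curve $E$ and a line bundle $L$ of degree $m \ge 3$; then $|L|$ embeds $E$ as a projectively normal elliptic normal curve $E \hra \PP^{m-1}$. Let $\bar X \subset \PP^m$ be the projective cone over $E$ with vertex $v$, and consider a general pencil of hyperplanes $\{H_c\}_{c \in \bA^1}$ with $H_0 \ni v$ but $H_c \not\ni v$ for $c \ne 0$. Projection from $v$ identifies $\bar X \cap H_c \cong E$ for $c \ne 0$, so the general member is smooth, while $\bar X \cap H_0$ is the cone inside $H_0 \cong \PP^{m-1}$ over the $m$ points $E \cap (H_0 \cap \PP^{m-1})$, i.e.\ a union of $m$ lines through $v$. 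Since $E$ lies in linearly general position, for a general pencil these $m$ points span and are in general position in the $\PP^{m-2}$ they lie in, so the singularity of $\bar X \cap H_0$ at $v$ is precisely $m$ general lines through the origin in $\bA^{m-1}$, the elliptic $m$-fold point of Definition \ref{def: elliptic} (for $m=3$ this is three concurrent coplanar lines, the planar triple point).

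Next I would check that this genuinely smooths the local singularity. Because $E$ is projectively normal, $\bar X$ is arithmetically Cohen--Macaulay, so its hyperplane sections form a flat family with constant Hilbert polynomial; restricting the incidence variety $\{(x,c) : x \in \bar X \cap H_c\}$ to a neighborhood of $(v,0)$ yields a flat deformation of $\hat{\sO}_{\bar X \cap H_0,\, v}$, the complete local ring of the elliptic $m$-fold point. The general fibers $\bar X \cap H_c \cong E$ are smooth everywhere, and as $c \to 0$ they limit to $\bar X \cap H_0$ and hence meet every neighborhood of $v$; thus in a fixed neighborhood of $v$ the nearby fibers are smooth and nonempty, which is exactly the assertion that the elliptic $m$-fold point is smoothable. (The total space of this family is singular at $(v,0)$, being locally isomorphic to the cone $\bar X$, but this is harmless: smoothability requires only smoothness of the nearby fibers.)

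Finally, for the two planar cases I would write down smoothings by hand: the cusp $y^2 = x^3$ is smoothed by $y^2 = x^3 + t$, and the tacnode $y^2 = x^2 y$ by $y^2 = x^2 y + t$, and in each case one checks directly that the partials have no common zero on the curve for $t \ne 0$, so the general fiber is smooth. The main point requiring care is the $m \ge 3$ case: identifying the central-fiber singularity with the elliptic $m$-fold point rests on the linear general position of the points cut on an elliptic normal curve, and one must confirm both the flatness of the hyperplane-section family and that the smooth nearby fibers actually pass through every neighborhood of the vertex. Once these are in place the conclusion is immediate.
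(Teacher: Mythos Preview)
Your argument is correct. The paper itself does not supply a proof here; it simply cites \cite{S1}, proof of Proposition~2.11, so there is nothing in this paper to compare against directly. Your cone-over-an-elliptic-normal-curve construction for $m\ge 3$ is precisely the classical smoothing and is the same idea Smyth invokes: a general hyperplane section of the cone $\bar X\subset\PP^m$ is a copy of $E$, while the section through the vertex is $m$ lines in linearly general position by the general position lemma for elliptic normal curves, giving exactly the singularity of Definition~\ref{def: elliptic}. Flatness follows from arithmetic Cohen--Macaulayness of the cone, as you say, and the nearby fibers are globally smooth, so in particular smooth near $v$. The explicit planar smoothings for $m=1,2$ are fine (and also fall under the cone construction, since the cusp and tacnode arise from cones over degree-$1$ and degree-$2$ ``elliptic'' curves in $\PP^0$ and $\PP^1$ after the usual interpretations, though writing them by hand is cleaner). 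One cosmetic point: for $m=3$ your cone construction already covers the planar triple point, so the case split $m\le 2$ versus $m\ge 3$ is the natural one, as you have it.
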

Elliptic $m$-fold points arise in the framework of birational geometry.  The next lemma states that given any 1-parameter family of genus $g$ curves with smooth generic fiber, contraction of a genus one subcurve of the central fiber via a suitable line bundle yields an elliptic $m$-fold point in the central fiber:

\begin{lemma}[\cite{S1}, Lemma 2.12, Gorenstein case]
Let $\pi: \sC \ra \Delta$ be a flat family of Gorenstein curves of genus $g$ with central fiber $C$ and smooth generic fiber.  Let $\sL$ be a line bundle on $\sC$ with positive degree on the generic fiber and non-negative degree on each component of the special fiber.  Assume that
\begin{enumerate}
	\item $E := \{ \text{Irreducible components } F \subset C: \deg \sL|_F = 0 \}$ is connected of arithmetic genus one,
	\item $\sL|_E \cong \sO_E$,
	\item Each point $p \in E \cap \ol{C \backslash E}$ is a node of $C$,
	\item Each point $p \in E \cap \ol{C \backslash E}$ is a regular point of $\sC$.
\end{enumerate}
Then $\sL$ is $\pi$-semiample and defines a contraction $\phi$
\begin{diagram}
\sC &						&\rTo^\phi& 							&	\sC'  \\
		& \rdTo_\pi	&		 			& \ldTo_{\pi'}	&				\\
		&						& \Delta	&								&
\end{diagram}
where $\phi$ is proper and birational with exceptional locus $E$.  Furthermore, if $\sL$ is of the form
\[
\omega_{\sC/\Delta}(D+\Sigma)
\]
with $D$ a Cartier divisor supported on $E$ and $\Sigma$ a Cartier divisor disjoint from $E$, then $p := \phi(E)$ is an elliptic $m$-fold point with
\[
m = \abs{E \cap \ol{C \backslash E} }.
\]
\label{lemma: contraction}
\end{lemma}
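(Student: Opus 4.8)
The plan is to follow the strategy already used by Smyth in \cite{S1} for the Gorenstein case, organizing the argument in two stages: first produce the contraction $\phi$ from semiampleness of $\sL$, then identify the singularity of the contracted fiber at $p = \phi(E)$ under the additional hypothesis $\sL = \omega_{\sC/\Delta}(D+\Sigma)$. For the first stage, I would verify that $\sL$ is $\pi$-semiample by checking that some power $\sL^{\otimes N}$ is globally generated relative to $\Delta$. On the generic fiber this is immediate since $\sL$ has positive degree on a smooth curve; on the central fiber $C$, the only locus where base points could occur is $E$, and hypothesis (2) ($\sL|_E \cong \sO_E$) together with the fact that $E$ has arithmetic genus one forces $\sL^{\otimes N}|_E$ to be trivial and hence globally generated, while on the complementary components $\deg \sL|_F > 0$ gives global generation after twisting. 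Then $\phi := \operatorname{Proj}$ of the relative section ring contracts exactly the curves on which $\sL$ restricts to a degree-zero (hence, on $E$, trivial) bundle, i.e.\ the locus $E$; properness and birationality of $\phi$, and the identification of the exceptional locus with $E$, follow formally from semiampleness and from the fact that $\sL$ is relatively ample away from $E$.

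For the second stage, the task is local around $p = \phi(E)$: I must compute $\hat{\sO}_{\sC',p'}$ where $p'$ is the closed point of the central fiber $C' = \phi(C)$ lying under $p$, and show the resulting curve singularity is the elliptic $m$-fold point of Definition \ref{def: elliptic} with $m = \abs{E \cap \ol{C \backslash E}}$. The key computational input is Lemma \ref{lemma: Gorenstein}(2): since $\sL = \omega_{\sC/\Delta}(D + \Sigma)$ with $D$ supported on $E$ and $\Sigma$ disjoint from $E$, and $\sL|_E \cong \sO_E$, the adjunction/restriction formula identifies $\omega_E(\text{marked points})$ in a way that pins down how the $m$ branches meeting $E$ at the nodes $E \cap \ol{C \backslash E}$ come together after contraction. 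Concretely, the dualizing sheaf of the contracted curve near $p$ is computed via the conductor: $\pi^* \omega_{C'} = \omega_{\tilde C}(2p_1 + \cdots + 2p_m)$ where $\tilde C \to C'$ is the normalization at $p$, which by the characterization in Lemma \ref{lemma: Gorenstein} (read in reverse) forces $p$ to be precisely the elliptic $m$-fold point. Hypotheses (3) and (4) — each node of $C$ along $E \cap \ol{C \backslash E}$ is a node of $C$ and a smooth point of the total space $\sC$ — are exactly what is needed to guarantee that the $m$ branches passing through $p$ are in general position (spanning an $(m-1)$-plane), so that the completed local ring is $k[[x_1,\dots,x_{m-1}]]/I_m$ rather than some more degenerate configuration, and to rule out obstructions to carrying out the contraction étale-locally.

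The main obstacle I expect is the second stage: precisely identifying the analytic type of the singularity at $p$, i.e.\ showing that the $m$ branches are in \emph{general} linear position. Semiampleness and the existence of the contraction are comparatively formal, but extracting from $\sL|_E \cong \sO_E$ and the divisorial form $\omega_{\sC/\Delta}(D+\Sigma)$ the statement that the $m$ tangent directions at $p$ span a hyperplane requires a careful local analysis — essentially a calculation with the normalization sequence $0 \to \sO_{C'} \to \pi_* \sO_{\tilde C} \to \text{(skyscraper)} \to 0$ and with how the relative dualizing sheaf trivializes on $E$. This is where hypotheses (3) and (4) and the freeness of $\sL|_E$ are all consumed, and I would lean on Smyth's local computation in \cite{S1} to handle it, checking only that nothing in the Gorenstein case requires the curves to be nodal elsewhere or of genus one globally (they do not — only $E$ needs arithmetic genus one, which is hypothesis (1)).
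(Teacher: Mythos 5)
This lemma is not proved in the paper at all: it is imported verbatim from \cite{S1} (Lemma 2.12 there), so there is no internal argument to compare your sketch against --- only the citation. Your two-stage outline (semiampleness $\Rightarrow$ contraction, then local identification of the singularity) does match the structure of Smyth's proof, and your assessment that the second stage carries all the difficulty is accurate. One remark on stage one: global generation of $\sL^{\otimes N}$ along $E$ is not quite ``$\sL^{\otimes N}|_E$ is trivial, hence globally generated''; you need the constant section of $\sL^{\otimes N}|_E$ to \emph{extend} to a section over $\sC$, i.e.\ surjectivity of $H^0(\sC,\sL^{\otimes N}) \to H^0(E,\sO_E)$, which is where the positivity of $\sL$ on $\ol{C\backslash E}$ and a vanishing of $H^1$ of the ideal-sheaf twist actually get used.

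The genuine gap is in stage two, where you propose to conclude by reading Lemma \ref{lemma: Gorenstein} ``in reverse.'' That lemma is one-directional: it says that \emph{if} $p$ is an elliptic $m$-fold point \emph{then} $\pi^*\omega_C = \omega_{\tilde C}(2p_1+\cdots+2p_m)$. The converse --- that a Gorenstein singularity with $m$ branches and conductor $\prod \mathfrak{m}_{p_i}^2$ must be the elliptic $m$-fold point of Definition \ref{def: elliptic}, with branches in general linear position --- is not a consequence of that lemma and is exactly the statement you cannot get for free. What actually closes this in \cite{S1} is a separate classification result (proved in the appendix there): the elliptic $m$-fold point is the \emph{unique} Gorenstein curve singularity with $m$ branches whose contribution to the arithmetic genus is one. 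The argument is then: (a) the contracted total space $\sC'$ and central fiber $C'$ are Gorenstein at $p$ because the contraction is defined by (a twist of) $\omega_{\sC/\Delta}$, which is where the hypothesis $\sL = \omega_{\sC/\Delta}(D+\Sigma)$ is consumed; (b) hypothesis (3) gives that exactly $m = \abs{E\cap\ol{C\backslash E}}$ smooth branches pass through $p$; (c) contracting the genus-one subcurve $E$ preserves arithmetic genus, so the singularity has genus one; and (d) the uniqueness statement identifies it. Your sketch, as written, substitutes an unproved converse for step (d), and also slightly misattributes the role of hypotheses (3)--(4): they guarantee that $E$ meets the rest of $C$ in $m$ distinct smooth branches and that $E$ and the attaching components are Cartier on $\sC$ near those points (so the adjunction computations make sense), not that the resulting tangent directions are in general position --- the general position comes out of the classification, not the hypotheses.
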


\subsubsection{Genus one curves}
\label{prelim: genus one}
We now turn our attention to global aspects of the geometry of genus one curves.  We will constantly use the following fact about the topology of a Gorenstein curve of arithmetic genus one:

\begin{lemma}[\cite{S1}, Lemma 3.1]
Let $C$ be a Gorenstein curve of arithmetic genus one.  Then $C$ contains a unique subcurve $Z \subset C$ satisfying
\begin{enumerate}
	\item $Z$ is connected,
	\item $Z$ has arithmetic genus one,
	\item $Z$ has no disconnecting nodes.
\end{enumerate}
We call $Z$ the \emph{minimal elliptic subcurve of $C$}.  We write
\[
C = Z \cup R_1 \cup \ldots \cup R_k,
\]
where $R_1, \ldots, R_k$ are the connected components of $\ol{C \backslash Z}$, and call this \emph{the fundamental decomposition of $C$}.  Each $R_i$ is a nodal curve of arithmetic genus zero meeting $Z$ in a single point, and $Z \cap R_i$ is a node of $C$.
\label{lemma: fundamental}
\end{lemma}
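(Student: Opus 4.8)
The plan is to make everything flow from the Mayer--Vietoris formula for arithmetic genus. If $A, B \subseteq C$ are subcurves with $A \cup B = C$ and $A \cap B$ finite, the exact sequence $0 \to \sO_C \to \sO_A \oplus \sO_B \to \sO_{A \cap B} \to 0$ yields
\[
p_a(C) = p_a(A) + p_a(B) + \length(A \cap B) - 1 .
\]
Applying this with $B = \ol{C \backslash A}$, together with $\length(A \cap B) \geq (\#\text{ components of } B)$ and $p_a(B) \geq 1 - (\#\text{ components of } B)$, I would first record two basic consequences: every connected subcurve $Y \subseteq C$ satisfies $p_a(Y) \leq p_a(C) = 1$; and two connected genus-one subcurves sharing no irreducible component can be neither disjoint nor meeting only in points, since either configuration would produce a connected subcurve of genus $\geq 2$.

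For existence, I would take $Z$ minimal under inclusion among connected subcurves of arithmetic genus one (there are finitely many subcurves, and $C$ itself is a candidate). If $Z$ had a disconnecting node $p$, normalizing there would give $Z = Z_1 \cup Z_2$ with $Z_1 \cap Z_2 = \{p\}$ a single node, whence $p_a(Z_1) + p_a(Z_2) = p_a(Z) = 1$; one summand equals one, producing a strictly smaller connected genus-one subcurve and contradicting minimality. So $Z$ satisfies (1)--(3). For the fundamental decomposition I would apply the gluing formula to $Z$ and $B = \ol{C \backslash Z} = R_1 \sqcup \cdots \sqcup R_k$: since each $R_i$ meets $Z$, one obtains $\sum_i p_a(R_i) + \length(Z \cap B) = k$ with $\length(Z \cap B) \geq k$, forcing every $R_i$ to have genus zero and to meet $Z$ in a single length-one point, i.e.\ a node. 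To see that each $R_i$ is itself nodal I would invoke the Gorenstein hypothesis: the singularities of $R_i$ away from $Z$ are Gorenstein germs of $C$, and a Gorenstein singularity contributes zero to the arithmetic genus only if it is a node (the seminormal multiple points with three or more branches are non-Gorenstein, and are exactly what is excluded), so $p_a(R_i) = 0$ forces $R_i$ to be a tree of $\PP^1$'s with nodes.

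The main obstacle, and the heart of the matter, is uniqueness, which I would deduce from the stronger statement that a subcurve $Z$ satisfying (1)--(3) is contained in every connected genus-one subcurve $Y$. Given such a $Y$, the monotonicity remarks force $Z$ and $Y$ to share a component. Writing $Z = Z_0 \cup Z_1$ with $Z_0$ the components of $Z$ lying in $Y$ and $Z_1$ those not in $Y$, I would apply the gluing formula to $Y$ and $Z_1$. Using $p_a(Y \cup Z) \leq 1$, the resulting identity $p_a(Z_1) + \length(Y \cap Z_1) = 1$ forces each connected component of $Z_1$ to have genus zero and to meet $Z_0 \subseteq Y$ in exactly one node, and to meet nothing else of $Z$. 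That node is then a disconnecting node of $Z$, contradicting (3) unless $Z_1 = \emptyset$, i.e.\ $Z \subseteq Y$. Taking $Y = Z'$ another subcurve satisfying (1)--(3) gives $Z \subseteq Z'$, and symmetrically $Z' \subseteq Z$, hence $Z = Z'$.

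The delicate bookkeeping, where I expect the argument to require the most care, is verifying that a component of $Z_1$ attaches to the remainder of $Z$ \emph{only} along a single node of $Z_0$: one must check that distinct connected components of $Z_1$ do not meet one another (they are genuinely disjoint as connected components) and that the length equality above leaves room for only one length-one intersection per component, so that the attaching node is truly disconnecting in $Z$. This is precisely the point at which conditions (2) and (3) interact, and isolating it cleanly is the crux of the whole proof.
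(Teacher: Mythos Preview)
The paper does not prove this lemma; it is quoted from Smyth \cite{S1} without argument, so there is no in-paper proof to compare against. Your approach via the Mayer--Vietoris genus formula and minimality is the standard one and is correct in outline: existence from minimality, the decomposition from the genus count, and uniqueness from the containment statement.

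There is, however, a genuine gap at the step ``meet $Z$ in a single length-one point, i.e.\ a node.'' A length-one intersection does \emph{not} by itself force a node: at the seminormal triple point (three coordinate axes in $\bA^3$), one axis meets the union of the other two in a single reduced point, yet the ambient singularity has three branches. What rescues the assertion is the Gorenstein hypothesis on $C$, but not in the way you invoke it (you only use Gorenstein for singularities of $R_i$ away from $Z$). The identity $\length_{q_i}(Z\cap R_i)=1$ says precisely that $\hsO_{C,q_i}\cong \hsO_{Z,q_i}\times_k \hsO_{R_i,q_i}$, the fibre product over the residue field; one then needs the fact that such a transversal wedge of reduced curve germs is Gorenstein only when both factors are regular. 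This is a short conductor computation---for $R=A\times_k B$ one finds $\delta_R=\delta_A+\delta_B+1$ while $\ell(R/\mathfrak{c}_R)\leq \ell(A/\mathfrak{c}_A)+\ell(B/\mathfrak{c}_B)-1\leq \delta_A+\delta_B-1$ unless both $A,B$ are DVRs---but it is not automatic and you do not supply it. The same issue recurs verbatim in your uniqueness argument when you assert that a component of $Z_1$ attaches to $Z_0$ at a node of $Z$: once the first gap is filled and every $q_i$ is known to be a node of $C$, the subcurve $Z$ inherits the Gorenstein property and the wedge argument can be rerun inside $Z$, but as written the step is unjustified.
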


We can also classify all possible minimal elliptic subcurves appearing in Lemma \ref{lemma: fundamental}:

\begin{lemma}[\cite{S1}, Lemma 3.3]
Let $Z$ be a Gorenstein curve of arithmetic genus one with no disconnecting nodes.  Then $Z$ is one of the following:
\begin{enumerate}
	\item A smooth elliptic curve,
	\item An irreducible rational nodal curve,
	\item A ring of $\PP^1$'s, or
	\item $Z$ has an elliptic $m$-fold point $p$ and the normalization of $Z$ at $p$ consists of $m$ distinct, smooth rational curves.
\end{enumerate}
Moreover, in all four cases, $\omega_Z \cong \sO_Z$.
\label{lemma: minimal}
\end{lemma}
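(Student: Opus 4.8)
The plan is to argue by a normalization and Euler-characteristic count, with the no-disconnecting-node hypothesis entering purely combinatorially. Let $\pi:\tilde Z\to Z$ be the normalization, write $Z=Z_1\cup\cdots\cup Z_c$ for the irreducible components, let $g_i$ be the genus of $\tilde Z_i$, and let $\delta=\sum_p\delta_p$ be the total $\delta$-invariant. From $0\to\sO_Z\to\pi_*\sO_{\tilde Z}\to\mathcal Q\to0$ with $\mathcal Q$ supported at $\mathrm{Sing}(Z)$ one gets $\chi(\sO_Z)=\chi(\sO_{\tilde Z})-\delta$, i.e. $p_a(Z)=\sum_i g_i+\delta-c+1$. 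I would repackage this through the incidence graph $\hat\Gamma$ whose vertices are the components together with the singular points and which has $r_p$ edges joining each singular point $p$ to its $r_p$ branches: since $Z$ is connected, $\hat\Gamma$ is connected and $b_1(\hat\Gamma)=\sum_p r_p-c-\#\mathrm{Sing}(Z)+1$, so that
\[
p_a(Z)=\sum_i g_i+b_1(\hat\Gamma)+\sum_p g_p,\qquad g_p:=\delta_p-(r_p-1)\ge0.
\]
Because $Z$ is Gorenstein, a singular point with $g_p=0$ is forced to be a node (a seminormal point with $r_p\ge3$ branches is never Gorenstein), while every non-nodal point contributes $g_p\ge1$.

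Now I would impose $p_a(Z)=1$. The three nonnegative summands sum to $1$, so exactly one equals $1$, giving three cases. If $\sum_i g_i=1$, then $b_1(\hat\Gamma)=0$ and all singular points are nodes; but in a curve whose incidence graph is a tree every node sits at a degree-two cut vertex, so normalizing it disconnects $Z$, and the hypothesis forces $Z$ to have no nodes, hence no singular points, hence $Z$ is a single smooth elliptic curve (case (1)). If $b_1(\hat\Gamma)=1$ with all $g_i=0$, then every singular point is a node and the dual graph is connected, bridgeless, with first Betti number $1$, that is, a single cycle of $\PP^1$'s: this yields one irreducible rational nodal curve (cycle of length one, a self-node; case (2)) or a ring of $\PP^1$'s (case (3)). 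If $\sum_p g_p=1$ with all $g_i=0$ and $b_1(\hat\Gamma)=0$, then there is a unique non-nodal point $p$, with $g_p=1$, and any remaining nodes lie in a tree and so are again disconnecting; thus $p$ is the only singular point. Its branches lie on distinct components (a repeated branch would create a loop), each component is therefore a smooth $\PP^1$ meeting $p$ in one smooth branch, and there are $m:=c=r_p$ of them with $\delta_p=m$.

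\textbf{The main obstacle} is the last, purely local, step of the third case: promoting the combinatorial data — a single Gorenstein singularity with $m$ smooth branches and $\delta_p=m$ — to the explicit normal form $\hat{\sO}_{Z,p}\cong k[[x_1,\dots,x_{m-1}]]/I_m$ of the elliptic $m$-fold point (the cusp $k[[x,y]]/(y^2-x^3)$ when $m=1$, the tacnode when $m=2$). The content is that Gorenstein-ness together with $\delta_p=m$ forces the $m$ tangent directions into the linearly general position cut out by $I_m$, excluding degenerate (e.g. coplanar) configurations, which are either non-Gorenstein or have the wrong $\delta$. I would supply this by the local classification of genus-one Gorenstein singularities with smooth branches from \cite{S1}, \S2 (the analysis underlying Lemma \ref{lemma: Gorenstein}); normalizing at $p$ then recovers exactly the $m$ distinct rational curves $\tilde Z_i$.

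Finally, the ``moreover'' $\omega_Z\cong\sO_Z$ I would verify case by case from the now-explicit geometry. It is classical for the smooth elliptic curve and the irreducible rational nodal curve; for the ring of $\PP^1$'s and for the elliptic $m$-fold point it is most efficiently seen from the adjunction formula $\pi^*\omega_Z\cong\omega_{\tilde Z}(\mathfrak c)$ of Lemma \ref{lemma: Gorenstein}, which gives $\deg(\omega_Z|_{Z_i})=2g_i-2+\deg\mathfrak c_i=0$ on every component (with $\deg\mathfrak c_i=2$), so that $\omega_Z$ is a degree-zero line bundle with $h^0(\omega_Z)=h^1(\sO_Z)=1$; a local check at the singular point shows its generating section is nonzero on every branch, hence nowhere vanishing, and thus trivializes $\omega_Z$.
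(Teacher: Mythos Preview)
The paper does not prove this lemma at all: it is quoted from \cite{S1}, Lemma 3.3, and no argument is reproduced here. So there is nothing in the present paper to compare your proof against.

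That said, your argument is correct and is essentially the standard route. The genus formula $p_a(Z)=\sum_i g_i + b_1(\hat\Gamma) + \sum_p g_p$ with $g_p=\delta_p-(r_p-1)\ge 0$, together with the fact that a Gorenstein curve singularity with $g_p=0$ must be a node, reduces the classification cleanly to your three cases, and the no-disconnecting-node hypothesis prunes the tree parts exactly as you say. The step you flag as the main obstacle---upgrading ``Gorenstein, $m$ branches, $g_p=1$'' to ``elliptic $m$-fold point''---is genuinely the substantive local input, and it is precisely what \cite{S1}, \S2 supplies; deferring to it is appropriate and is what the paper itself does implicitly by citing the lemma. One minor wording issue: in case (4) with $m=1$ the unique branch is cuspidal, not smooth, so ``meeting $p$ in one smooth branch'' is off there, though your actual conclusion (the normalization is a single smooth $\PP^1$) is unaffected. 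Your treatment of $\omega_Z\cong\sO_Z$ is fine; the ``local check'' amounts to the residue/gluing conditions at the singularity propagating nonvanishing from one branch to all of them, after which degree zero on each $\PP^1$ finishes it.
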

Lemmas \ref{lemma: elliptic m-fold point is smoothable}, \ref{lemma: fundamental}, and \ref{lemma: minimal} give the following fact which will be useful in Section \ref{subsection: smoothability}:
\begin{lemma}
\label{lemma: Gorenstein is smoothable}
Any Gorenstein curve of arithmetic genus one is smoothable.
\end{lemma}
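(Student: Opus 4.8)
The statement to prove is Lemma \ref{lemma: Gorenstein is smoothable}: any Gorenstein curve $C$ of arithmetic genus one is smoothable. The plan is to reduce the smoothing problem to the case of the minimal elliptic subcurve via the fundamental decomposition, and there to use the explicit classification of Lemma \ref{lemma: minimal} together with the known smoothability of elliptic $m$-fold points.

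First I would invoke Lemma \ref{lemma: fundamental} to write $C = Z \cup R_1 \cup \ldots \cup R_k$, where $Z$ is the minimal elliptic subcurve and each $R_i$ is a genus zero nodal curve attached to $Z$ at a single node. Since each $R_i$ is a nodal curve of arithmetic genus zero, it is a tree of $\PP^1$'s, hence itself unobstructed and trivially ``smoothable'' in the sense that it deforms to a smooth rational curve (or one may simply absorb the $R_i$ into a partial normalization argument). The point is that attaching trees of rational curves at nodes does not affect smoothability: given a one-parameter smoothing of $Z$, one can attach a trivial family of smooth rational tails along sections, or alternatively first smooth the nodes $Z \cap R_i$. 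So it suffices to produce a smoothing of $Z$ alone, i.e.\ to prove the lemma when $C = Z$ has no disconnecting nodes.

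Next I would run through the four cases of Lemma \ref{lemma: minimal}. If $Z$ is a smooth elliptic curve there is nothing to prove. If $Z$ is an irreducible rational nodal curve, it is a plane cubic (a nodal cubic), which deforms to a smooth plane cubic — e.g.\ within the linear system of cubics in $\PP^2$, or one can cite that nodes are smoothable and that smoothing the unique node of an irreducible arithmetic genus one curve yields a smooth genus one curve. If $Z$ is a ring (cycle) of $\PP^1$'s, this is precisely a fiber of the Tate curve / the nodal degeneration of a smooth elliptic curve — indeed a cycle of $n$ lines appears as a degenerate fiber in the Weierstrass family (a Néron $I_n$ fiber), hence is smoothable; alternatively, smooth the nodes one at a time, checking at each stage that arithmetic genus one and connectedness are preserved. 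Finally, if $Z$ has an elliptic $m$-fold point $p$ whose normalization is $m$ distinct smooth rational curves, I would apply Lemma \ref{lemma: elliptic m-fold point is smoothable}: the elliptic $m$-fold point is a smoothable singularity, so there is a local smoothing of the singularity; since $Z$ away from $p$ is a disjoint union of smooth rational curves, one can globalize this local smoothing (the rational branches carry no local obstruction) to a smoothing of all of $Z$, which produces a one-parameter family with smooth general fiber of genus one.

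The main obstacle — and the step requiring the most care — is the gluing/globalization in the elliptic $m$-fold point case: one must check that the local smoothing of the singularity at $p$ given by Lemma \ref{lemma: elliptic m-fold point is smoothable} can be spread out to a global flat family over a disc whose total space is nice enough and whose general fiber is smooth of arithmetic genus one. This is where one uses that the only singularity of $Z$ is the single point $p$ (the branches through it being smooth rational curves), so that obstructions to deforming $Z$ are concentrated at $p$ and vanish by smoothability of the singularity, while the deformations of the smooth rational branches and the attaching data are unobstructed. The analogous bookkeeping (arithmetic genus, connectedness, Gorenstein property persisting, number of nodes) in the ``ring of $\PP^1$'s'' case is routine by comparison. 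Reattaching the rational tails $R_i$ at the end is then immediate.
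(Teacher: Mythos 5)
Your proof follows the same route as the paper, which states Lemma \ref{lemma: Gorenstein is smoothable} as an immediate consequence of Lemmas \ref{lemma: elliptic m-fold point is smoothable}, \ref{lemma: fundamental}, and \ref{lemma: minimal} and gives no further argument. Your write-up fills in that same outline (fundamental decomposition, classification of the minimal elliptic subcurve, smoothability of the elliptic $m$-fold point, reattaching rational tails) and correctly flags the globalization of the local smoothing as the only step needing care.
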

The following lemma will allow us to phrase the condition of $m$-stability without making reference to the minimal elliptic subcurve:
\begin{lemma}[\cite{S1}, Corollary 3.6]
Let $(C, p_1, \ldots, p_n)$ be a pointed Gorenstein curve of arithmetic genus one, and suppose that every smooth rational component of $C$ has at least two distinguished points.  Let $Z \subset C$ be the minimal elliptic subcurve.  Then
\[
\abs{ \{Z \cap \ol{C \backslash Z} \} \cup \{p_i: p_i \in Z\} } > m.
\]
\noindent if and only if
\[
\abs{ \{E \cap \ol{C \backslash E} \} \cup \{p_i: p_i \in E\} } > m
\]
for every connected arithmetic genus one subcurve $E \subset C$.
\label{lemma: any E}
\end{lemma}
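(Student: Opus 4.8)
The plan is to reduce the equivalence to two facts. The first is that every connected arithmetic genus one subcurve $E \subseteq C$ automatically contains the minimal elliptic subcurve $Z$; the second is that, for such an $E$, enlarging $Z$ to $E$ cannot decrease the number of distinguished points. Throughout, for a subcurve $Y \subseteq C$ I write $b(Y) := \abs{ \{Y \cap \ol{C \backslash Y}\} \cup \{p_i : p_i \in Y\} }$; for a single component this is its number of distinguished points in the sense of the Notation.

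To prove the first fact I would use the fundamental decomposition $C = Z \cup R_1 \cup \cdots \cup R_k$ of Lemma \ref{lemma: fundamental}, in which each $R_i$ is a tree of smooth rational curves meeting $Z$ in a single node and the $R_i$ are pairwise disjoint. A connected subcurve disjoint from $Z$ lies inside one $R_i$ and hence has $p_a = 0$; a connected subcurve meeting $Z$ but not containing all of $Z$ is obtained by attaching forests of such rational curves to a proper connected subcurve of $Z$, which by the classification in Lemma \ref{lemma: minimal} again has $p_a = 0$. The one point requiring attention is case (4): a union of $j < m$ of the $m$ branches through the elliptic $m$-fold point has linearly independent tangent directions (the $m$ directions are in general linear position in $\bA^{m-1}$, so any $m-1$ of them are independent), hence it is a union of $j$ linearly independent concurrent lines, of $\delta$-invariant $j-1$ and so of arithmetic genus $0$ — not a smaller elliptic singularity. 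Since attaching forests of rational curves along nodes does not change $p_a$, such an $E$ would have $p_a(E) = 0$, contradicting $p_a(E) = 1$; hence $E \supseteq Z$.

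With the first fact in hand, the ``only if'' direction is immediate, as $Z$ is itself a connected arithmetic genus one subcurve. For ``if'', assume $b(Z) > m$ and let $E \supseteq Z$ be a connected arithmetic genus one subcurve. Then $E \backslash Z$ is a forest of smooth rational curves attached to $Z$, so I can list its components $F_1, \ldots, F_t$ with each $F_s$ meeting $E_{s-1} := Z \cup F_1 \cup \cdots \cup F_{s-1}$ in a single point $q_s$; this $q_s$ is a node of $C$, distinct from the elliptic $m$-fold point since $F_s$ lies on some $R_i$. Passing from $E_{s-1}$ to $E_s := E_{s-1} \cup F_s$, the point $q_s$ ceases to be a distinguished point of the subcurve (both of its branches now lie in $E_s$), while each of the remaining $b(F_s) - 1 \geq 1$ distinguished points of $F_s$ — a marked point, or a node joining $F_s$ to a component outside $E_s$ — becomes a distinguished point of $E_s$. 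Hence $b(E_s) - b(E_{s-1}) = b(F_s) - 2 \geq 0$, and telescoping gives $b(E) \geq b(Z) > m$, as needed.

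The main obstacle is the first fact, specifically the elliptic $m$-fold point case: one must use not merely that $\hat{\sO}_{Z,p}$ is abstractly the elliptic $m$-fold singularity, but that its $m$ branches are in sufficiently general position that no proper subset of them re-forms a genus one singularity — equivalently, that any union of fewer than $m$ branches has arithmetic genus $0$. The bookkeeping in the monotonicity step is routine but must be checked: each attaching point $q_s$ really is a node (exactly two branches, both absorbed into $E_s$) distinct from the elliptic $m$-fold point, and the remaining distinguished points of $F_s$ really do persist as distinguished points of $E_s$ without being double counted. Granting these, the two facts combine to give the equivalence.
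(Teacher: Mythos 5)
The paper does not actually prove this statement: it is imported verbatim from \cite{S1} (Corollary 3.6), so there is no in-text argument to compare against. Your proof is a correct, self-contained derivation along the natural lines, and it isolates the right two ingredients: (i) every connected arithmetic genus one subcurve $E$ contains $Z$, and (ii) passing from $Z$ to a larger such $E$ can only add distinguished points, given that every smooth rational component has at least two. You also correctly identify the one genuinely nontrivial input to (i): that any $j<m$ of the branches of the elliptic $m$-fold point have linearly independent tangent directions, so their union is a seminormal singularity of $\delta$-invariant $j-1$ and the subcurve has arithmetic genus zero rather than one. Two small points deserve a sentence each in a polished write-up, though both are immediate from Lemma \ref{lemma: fundamental}: first, that $E\cap Z$ is connected whenever $E$ is (each $R_i$ meets $Z$ in a single point, so a path between two components of $Z$ cannot detour through the rational trees); second, in the telescoping step, that $F_s$ meets $E_{s-1}$ in \emph{exactly} one point $q_s$ (the $R_i$ are trees, so no cycles form), which is what guarantees that the remaining $b(F_s)-1\ge 1$ distinguished points of $F_s$ all survive as distinguished points of $E_s$ and are not double-counted. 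Also note that the components of each $R_i$ are automatically smooth rational curves (an irreducible component of positive arithmetic genus would force $p_a(R_i)>0$), so the hypothesis on smooth rational components does apply to every $F_s$. With those remarks the argument is complete and correct.
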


Finally, the following definition will be convenient in the proof of the valuative criterion for properness:

\begin{definition}[\cite{S1}, Definition 3.4]
\label{def: level}
Let $(C, p_1, \ldots, p_n)$ be a pointed Gorenstein curve of arithmetic genus one, and let $Z \subset C$ be the minimal elliptic subcurve.  Then the \emph{level} of $(C, p_1, \ldots, p_n)$ is defined to be the number of distinguished points of $Z$, i.e.
\[
\abs{ \{Z \cap \ol{C \backslash Z} \} \cup \{p_i: p_i \in Z\} }.
\]
\end{definition}

\subsubsection{Infinitesimal automorphisms}
By definition, an infinitesimal automorphism of a pointed curve $(C, p_1, \ldots, p_n)$ is an element of the tangent space $T_{[\id]} \Aut(C, p_1, \ldots, p_n)$.

\begin{lemma}
Assume $\ch k \neq 2, 3$.  Let $(C, p_1, \ldots, p_n)$ be a pointed Gorenstein genus one curve.  Then
\[
\InfAut(C, p_1, \ldots, p_n) = 0
\]
\noindent if and only if the following conditions hold:
\begin{enumerate}
		\item If $C$ is nodal, then every rational component of $C$ has at least three distinguished points.
		\item If $C$ has a (unique) elliptic $m$-fold point $p$, and $B_1, \ldots, B_m$ are the irreducible components of the minimal elliptic subcurve $Z \subset C$, then
		\begin{enumerate}
		\renewcommand{\labelenumiii}{(b\arabic{enumiii})}
			\item Each $B_i$ has at least 2 distinguished points.
			\item Some $B_i$ has at least 3 distinguished points.
			\item Every other component of $C$ has at least 3 distinguished points.
		\end{enumerate}
	\end{enumerate}
\label{lemma: curve automorphisms}
\end{lemma}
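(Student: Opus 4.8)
The plan is to compute the infinitesimal automorphism space directly from its description as global sections of a sheaf. Since $C$ is Gorenstein and the automorphisms fixing the marked points are being studied, the natural object is $H^0(C, \mathcal{T}_C(-p_1 - \cdots - p_n))$, where $\mathcal{T}_C$ denotes the sheaf of derivations (the dual of $\Omega_C$, or equivalently $\mathcal{H}om(\Omega_C, \sO_C)$). The vanishing of $\InfAut$ is equivalent to the vanishing of this $H^0$. First I would treat the nodal case, which is classical: normalize $C$, pull the computation back to the smooth components, and observe that a derivation on $\PP^1$ vanishing at $\geq 3$ points is zero, while on a smooth elliptic curve the global vector fields are translations — but these move the $n$ marked points (or the nodes attaching $Z$ to the rest) unless there are none, in which case condition (1) forces at least one node on the elliptic component, again killing the translation. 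Conversely, if some rational component has $\leq 2$ distinguished points, the $1$-parameter family of automorphisms of $\PP^1$ fixing those two points (a $\GG_m$) or the translations of the elliptic curve give a nonzero infinitesimal automorphism, so (1) is necessary.

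Next I would handle the elliptic $m$-fold point case, which is the substantive part. Here the key local input is an understanding of the derivations of $\hat{\sO}_{C,p}$ for the singularities listed in Definition~\ref{def: elliptic}. The tangent sheaf $\mathcal{T}_C$ near $p$ is no longer locally free, and one must identify which local derivations extend to global vector fields compatible with the rational tails. I would use Lemma~\ref{lemma: Gorenstein} and the normalization $\pi: \tilde{C} \to C$ at $p$: a derivation of $\sO_{C,p}$ pulls back to a derivation of $\sO_{\tilde{C}}$ on the $m$ branches $B_1, \ldots, B_m$ that is \emph{tangent} to the singularity, which (for the explicit equations of the elliptic $m$-fold point, using $\ch k \neq 2,3$) translates into the condition that the derivation on each branch vanishes to appropriate order at $p_i$ and that the leading coefficients at $p_i$ on the $m$ branches satisfy one linear relation (the ``common scaling'' coming from the fact that the $m$ lines in $\bA^{m-1}$ share the relation defining $I_m$). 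This is exactly why (b1) and (b2) appear: on each $B_i \cong \PP^1$, a vector field vanishing at $p$ and at the other distinguished points of $B_i$ exists iff $B_i$ has $\leq 2$ distinguished points (giving a $1$-dimensional space of such), and these $m$ one-dimensional spaces are then cut down by the single linear relation — so the global space is nonzero precisely when \emph{all} $B_i$ have $\leq 2$ distinguished points, i.e.\ it vanishes iff some $B_i$ has $\geq 3$. Condition (b3) is again the nodal-type statement for the rational tails hanging off $Z$.

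I would organize the argument as: (i) reduce to $\InfAut(C) = H^0(C, \mathcal{T}_C(-\sum p_i)) = 0$; (ii) dévissage along the fundamental decomposition $C = Z \cup R_1 \cup \cdots \cup R_k$ of Lemma~\ref{lemma: fundamental}, using the normalization exact sequences at each node to reduce global sections on $C$ to compatible tuples of sections on the components of the normalization of $Z$ at $p$ together with the $R_i$; (iii) the local computation of tangent derivations at the elliptic $m$-fold point, producing the single linear constraint; (iv) assembling (ii) and (iii) into the stated combinatorial conditions, and separately exhibiting explicit nonzero infinitesimal automorphisms whenever one of the conditions fails, to get the converse. The main obstacle I anticipate is step (iii): pinning down exactly the space of derivations of $k[[x_1, \ldots, x_{m-1}]]/I_m$ (and the $m \leq 3$ plane-curve cases) that are tangent to the singularity, and extracting from it the precise ``$m$ branches, one relation'' picture — in particular verifying that the constraint is a \emph{single} relation and not more, which is where the hypothesis $\ch k \neq 2,3$ and the ``general lines'' condition are really used. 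This is presumably where the paper will cite or reuse computations from \cite{S1} or \cite{S2}; I would lean on Lemma~\ref{lemma: Gorenstein}(2) to control the conductor and reduce the branch-wise vanishing orders cleanly.
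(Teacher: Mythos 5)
The first thing to say is that the paper does not actually prove this lemma: its entire proof is the observation that the statement is Corollary~2.4 of \cite{S1} (for Gorenstein $C$) combined with the isomorphism $\InfAut(C,p_1,\ldots,p_n)\cong H^0(T_C(-p_1-\cdots-p_n))$. Your plan is therefore a reconstruction of the cited result rather than an alternative to anything in this paper, and its architecture --- identify $\InfAut$ with sections of the twisted tangent sheaf, pass to the normalization, analyze branch by branch along the fundamental decomposition --- is exactly the architecture of Smyth's proof. You also correctly locate where $\ch k\neq 2,3$ enters: one needs every derivation of $\sO_{C,p}$ to lift to the normalization (automatic in characteristic $0$ by Seidenberg's theorem, false for the cusp in characteristic $2$ or $3$), which is precisely the content of Remark~\ref{remark: extra automorphisms}.

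There is, however, a genuine error in your step (iii), and it sits exactly where conditions (b1) and (b2) are decided. The correct gluing condition at the elliptic $m$-fold point is: a tuple of vector fields $f_i(t_i)\partial_{t_i}$ on the $m$ branches descends to $C$ if and only if every $f_i$ vanishes at the preimage of $p$ \emph{and all $m$ derivatives $f_i'(0)$ coincide} --- a single common scaling parameter, i.e.\ $m-1$ independent linear conditions, not ``one linear relation.'' The distinction is not cosmetic. If the constraint really were a single linear equation among the $m$ leading coefficients, then when every $B_i$ has exactly two distinguished points the $m$-dimensional space of branch fields would be cut down only to dimension $m-1$, and killing one branch (a $B_j$ with three distinguished points) would still leave dimension $m-2>0$ for $m\geq 3$; condition (b2) would then be the wrong criterion. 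It is precisely because the condition is ``all equal'' that one branch with three distinguished points forces the common scalar to be $0$ and thereby annihilates the $\GG_m$-field on every other two-pointed branch. Relatedly, your summary ``the global space is nonzero precisely when all $B_i$ have at most $2$ distinguished points'' is not the right dichotomy: the space is nonzero if and only if either some $B_i$ has $p$ as its only distinguished point (a field vanishing to order $2$ at $p$ on that branch glues with zero elsewhere --- this is why (b1) is needed) \emph{or} every $B_i$ has exactly two (the $\GG_m$-fields glue via the common scaling --- this is why (b2) is needed). With the local condition stated correctly, the rest of your d\'evissage does go through and reproduces the statement.
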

\begin{proof}
This is Corollary 2.4 in \cite{S1} (in the case that $C$ is Gorenstein) combined with the well-known isomorphism
\[
\InfAut(C, p_1, \ldots, p_n) \cong H^0(T_C(-p_1- \ldots-p_n)).
\]
\end{proof}

\begin{remark}
\label{remark: extra automorphisms}
The proof of Lemma \ref{lemma: curve automorphisms} relies on the assumption that regular vector fields on $C$ are always induced by regular vector fields on the normalization $\tilde{C}$.  This assumption is true in characteristic 0, but not necessarily in positive characteristic; see Section 2.1 of \cite{S1} for an example.
\end{remark}

We can now give the definition of an $m$-stable curve.  For simplicity, we do not incorporate the weighted variant defined in \cite{S1} in this paper (but as noted in the introduction, Hassett's weighted stability condition has been extended to the setting of maps).
\begin{definition}
Assume $\ch k \neq 2, 3$.  Fix any integers $0 \leq m < n$.  A pointed genus one curve $(C, p_1, \ldots, p_n)$ is said to be \emph{$m$-stable} if the following conditions hold:
\begin{enumerate}
	\item $C$ has only nodes and elliptic $l$-fold points, $l \leq m$, as singularities.
	\item The number of distinguished points on the minimal elliptic subcurve $Z \subset C$ is strictly greater than $m$.  Equivalently, by Lemma \ref{lemma: any E},
	\[
	\abs{ \{E \cap \ol{C \backslash E} \} \cup \{p_i: p_i \in E\} } > m
	\]
	for any connected subcurve $E \subset C$ of arithmetic genus one.
	\item $C$ has no non-trivial infinitesimal automorphisms identical on $p_1, \ldots, p_n$.  Equivalently, by Lemma \ref{lemma: curve automorphisms},
	\begin{enumerate}
		\item If $C$ is nodal, then every rational component of $C$ has at least three distinguished points.
		\item If $C$ has a (unique) elliptic $m$-fold point $p$, and $B_1, \ldots, B_m$ are the irreducible components of the minimal elliptic subcurve $Z \subset C$, then
		\begin{enumerate}
		\renewcommand{\labelenumiii}{(b\arabic{enumiii})}
			\item Each $B_i$ has at least 2 distinguished points.
			\item Some $B_i$ has at least 3 distinguished points.
			\item Every other component of $C$ has at least 3 distinguished points.
		\end{enumerate}
	\end{enumerate}
\end{enumerate}
\label{def: m-stable curve}
\end{definition}

\begin{remark}
For $m=0$, this is the classical definition of a Deligne-Mumford stable genus one curve.  The construction of the space of $m$-stable curves in \cite{S1} is given for $m \geq 1$ since it uses classical semi-stable reduction (i.e.\ properness of $\sMbar_{1,n}$).
\label{remark: m=0 curves}
\end{remark}

\subsection{$m$-stable maps}
\label{prelim: m-stable maps}

Let $(C, p_1, \ldots, p_n)$ be a pointed curve, and let $\mu: C \ra X$ be a map to a projective variety $X$.  An \emph{automorphism} of $\mu$ is an automorphism $\phi$ of $C$ such that $\phi(p_i) = p_i$ and $\mu \circ \phi = \mu$.  We denote the group of automorphisms of $\mu$ by $\Aut(C, p_1, \ldots, p_n, \mu)$ or simply $\Aut(\mu)$.  An \emph{infinitesimal automorphism} of $\mu$ is an element of the tangent space $T_{[\id]} \Aut(\mu)$.  We denote the vector space of infinitesimal automorphisms of $\mu$ by $\InfAut(C, p_1, \ldots, p_n, \mu)$ or $\InfAut(\mu)$.

We want to translate the condition $\InfAut(\mu) = 0$ into a condition on distinguished points on the domain curve.  We first adopt the following condition to ensure separability of $\mu$, well-known from \cite{BM}:  if $\ch k > 0$, we say that $\mu$ is \emph{bounded by the characteristic} if
\[
\deg \mu^*(\sO_X(1)) < \ch k.
\]
If $\ch k = 0$, we say that every map $\mu: C \ra X$ is bounded by the characteristic.  Note that every map bounded by the characteristic is separable.

\begin{lemma}
\label{lemma: map automorphisms}
Assume $\ch k \neq 2, 3$.  Let $(C, p_1, \ldots, p_n)$ be a pointed Gorenstein genus one curve, and let $\mu: C \ra X$ be a map to a projective variety $X$.  Assume that $\mu$ is bounded by the characteristic.  Then
\[
\InfAut(C, p_1, \ldots, p_n, \mu) = 0
\]
\noindent if and only if the following conditions hold:
\begin{enumerate}
		\item If $C$ is nodal, then every rational component of $C$ on which $\mu$ is constant has at least three distinguished points.
		\item If $C$ has a (unique) elliptic $m$-fold point $p$, and $B_1, \ldots, B_m$ are the irreducible components of the minimal elliptic subcurve $Z \subset C$, then
		\begin{enumerate}
		\renewcommand{\labelenumiii}{(b\arabic{enumiii})}
			\item If $\mu$ is constant on $B_i$, then $B_i$ has at least 2 distinguished points.
			\item If $\mu$ is constant on $Z$, then some $B_i$ has at least 3 distinguished points.
			\item Every other component of $C$ on which $\mu$ is constant has at least 3 distinguished points.
		\end{enumerate}
	\end{enumerate}
\end{lemma}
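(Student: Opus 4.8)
The plan is to follow Smyth's proof of Lemma~\ref{lemma: curve automorphisms} (Corollary~2.4 of \cite{S1}), while tracking which irreducible components of the domain carry a non-constant map. Since $H^0$ is left exact, the defining condition $\mu\circ\phi=\mu$ for an automorphism of $\mu$ linearizes to exhibit $\InfAut(C,p_1,\ldots,p_n,\mu)$ as the kernel of
\[
d\mu\colon\ H^0\big(T_C(-p_1-\cdots-p_n)\big)\ \longrightarrow\ H^0(\mu^*T_X);
\]
that is, an infinitesimal automorphism of $\mu$ is exactly an infinitesimal automorphism $v$ of the pointed curve $(C,p_1,\ldots,p_n)$ with $d\mu(v)=0$. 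Since $\ch k\neq 2,3$, every such $v$ is induced (Remark~\ref{remark: extra automorphisms}) by a vector field $\tilde v$ on the normalization $\tilde C=\bigsqcup_F\tilde F$ subject to Smyth's gluing conditions at the preimages of the nodes and of the elliptic $m$-fold point; this is the one place where $\ch k\neq 2,3$ enters.

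The new ingredient is the behaviour on non-constant components. If $\mu$ is non-constant on a component $F$, then the composite $\tilde F\to F\ras{\mu}X$ is non-constant and, because $\mu$ is bounded by the characteristic, separable; its differential is therefore a nonzero, hence injective, map of sheaves on the integral smooth curve $\tilde F$, and the relation $d\mu(v)=0$ forces $\tilde v|_{\tilde F}=0$. Consequently, in Smyth's component-by-component analysis, every non-constant component of $C$ contributes the zero vector field, so all the surviving freedom---and hence all the conditions needed to make $\InfAut(\mu)$ vanish---comes from the components on which $\mu$ is constant. This is the one place where the hypothesis ``$\mu$ bounded by the characteristic'' enters.

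It then remains to re-run Smyth's enumeration with this extra vanishing imposed. A constant rational component $F$ of $C$ not containing the elliptic point contributes the space of vector fields on $\PP^1$ vanishing at the distinguished points of $F$, which is zero if and only if $F$ has at least three distinguished points---giving clause~(1) in the nodal case and clause~(2)(b3). For the branches $B_1,\ldots,B_m$ of a (unique) elliptic $m$-fold point $p$: a branch on which $\mu$ is non-constant contributes $0$; a constant branch $B_i$ with at most one distinguished point retains nonzero vector fields compatible with the elliptic gluing, which forces clause~(2)(b1); and the elliptic $m$-fold point carries one further infinitesimal automorphism beyond those of its branches taken separately (morally, a simultaneous rescaling of all the $B_i$), which is killed precisely when some $B_i$ carries a non-constant map or some constant $B_i$ already has three distinguished points, and is otherwise present---this is clause~(2)(b2), conditional on $\mu$ being constant on all of $Z$. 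Collecting the cases gives the stated equivalence. (As in Lemma~\ref{lemma: curve automorphisms}, the constant map from a smooth genus-one curve with $n=0$ is tacitly excluded, being the one case where $\InfAut(\mu)\neq 0$ while the listed conditions hold vacuously.)

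The step I expect to be the main obstacle is bookkeeping rather than conceptual: one must carefully reproduce Smyth's identification of $H^0(T_C(-p_1-\cdots-p_n))$ with a space of vector fields on $\tilde C$ subject to explicit gluing relations---in particular the precise compatibility condition at an elliptic $m$-fold point, which for $m\geq 4$ is not even a planar singularity---and then verify that the single modification ``$\tilde v=0$ on non-constant components'' interacts with each of these relations exactly as asserted in the previous paragraph. Once that is in hand, the remainder of the argument is a direct transcription of Smyth's proof together with the combinatorics of distinguished points.
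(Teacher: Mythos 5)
Your proposal is correct, but it takes a genuinely different route from the paper. The paper splits the two implications and treats them by different means: for the ``if'' direction it uses boundedness by the characteristic to choose two hyperplanes meeting $\mu$ transversely at unmarked smooth points, marks the points of $\mu^*(H_1)$ and $\mu^*(H_2)$, and observes that the resulting pointed curve satisfies Lemma~\ref{lemma: curve automorphisms}, so that $\InfAut(\mu)$ injects into the (vanishing) infinitesimal automorphisms of that pointed curve; for the ``only if'' direction it constructs explicit one-parameter families of automorphisms (order-two agreement at $p$ on a single branch for (b1), simultaneous rescaling of the branches for (b2)) that commute with $\mu$ when the conditions fail. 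You instead identify $\InfAut(\mu)$ once and for all as $\ker\bigl(d\mu\colon H^0(T_C(-p_1-\cdots-p_n))\ra H^0(\mu^*T_X)\bigr)$, use separability to force the vector field to vanish on every non-constant component, and then re-run Smyth's local analysis of regular vector fields at nodes and at the elliptic $m$-fold point with that vanishing imposed. Both arguments invoke separability at the crucial point, but for different purposes (transverse hyperplane sections versus injectivity of $d\mu$ on non-constant components). Your approach yields both implications from a single computation and is arguably more transparent about \emph{why} each clause appears, at the cost of having to reproduce Smyth's explicit gluing conditions for vector fields at the elliptic $m$-fold point -- which you correctly flag as the remaining bookkeeping, and whose output (order-two vanishing on each branch plus a one-dimensional simultaneous rescaling) matches exactly the families the paper constructs by hand. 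The paper's route outsources that local analysis entirely to the curve case in the ``if'' direction. Your parenthetical about the smooth unpointed genus-one curve with constant map is a fair observation about an edge case present in both the curve and map versions of the statement.
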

\begin{proof}
If $C$ is nodal, it classically known that $\InfAut(\mu) = 0$ if and only if (1) holds.  Suppose (2) holds.  We may assume without loss of generality that $X = \PP^r$.  Since $\mu$ is separable, we can choose hyperplanes $H_1, H_2 \subset \PP^r$ which intersect the map $\mu$ transversely at unmarked smooth points, i.e.\ the divisors $\mu^*(H_1), \mu^*(H_2)$ consist of reduced unmarked smooth points on $C$.  Write $\mu^*(H_j) = \sum q_{j,k}$, $1 \leq k \leq d$, where $d$ is the degree of $\mu$.  Then the pointed curve
\[
(C, \{ p_i, q_{j,k} \})
\]
\noindent satisfies the conditions of Lemma \ref{lemma: curve automorphisms}, so $\InfAut(C, \{ p_i, q_{j,k} \}) = 0$.  Any automorphism of $(C, \{ p_i \})$ which commutes with $\mu$ must be the identity on $\mu^*(H_j) = \sum q_{j,k}$, so we have an injection
\[
\Aut(\mu) \hra \Aut(C, \{ p_i, q_{j,k} \}),
\]
\noindent hence an injection
\[
\InfAut(\mu) \hra \InfAut(C, \{ p_i, q_{j,k} \}),
\]
\noindent so $\InfAut(\mu) = 0$.

Suppose $\InfAut(\mu) = 0$.  2(c) is clear since any such component intersects the rest of the curve nodally.  We now directly construct automorphisms of $Z$ to establish conditions 2(a) and 2(b).

If 2(a) does not hold, then $\mu$ is constant on some $B_i$ with no distinguished points other than $p$.  Given $\phi_j \in \Aut(B_j)$ for every $j$ which are the identity at $p$ and which agree to order 2 at $p$, it is clear that the $\phi_j$ glue together to an automorphism $\phi \in \Aut(Z)$.  There is a non-trivial 1-parameter family of $\phi_i \in \Aut(B_i)$ equal to the identity to order 2 at $p$ (i.e.\ preserving the subscheme $2p$).  Gluing this family with $\id \in \Aut(B_j)$ for all $j \neq i$ yields a 1-parameter family of non-trivial automorphisms of $Z$ (hence of $C$).  These automorphisms commute with $\mu$, so we conclude that $\InfAut(\mu) \neq 0$.

Similarly, if 2(b) does not hold, then $\mu$ is constant on $Z$ and each $B_i$ has at most one distinguished point $q_i$ other than the elliptic $m$-fold point $p$.  For each $i$, choose an isomorphism $B_i \cong \PP^1$ so that $q_i = \infty$; then multiplication by any non-zero $\lambda \in \CC$ gives a 1-parameter family of automorphisms of $Z$ which commute with $\mu$, so again $\InfAut(\mu) \neq 0$.
\end{proof}

We can now give the definition of an $m$-stable map:

\begin{definition}
Assume $\ch k \neq 2, 3$.  Fix any integers $m \geq 0$ and $n \geq 0$.  Let $(C, p_1, \ldots, p_n)$ be a pointed genus one curve, and let $\mu: C \ra X$ be a map to a projective variety $X$.  Then $(C, p_1, \ldots, p_n, \mu)$, or simply $\mu$, is said to be \emph{$m$-stable} if the following conditions hold:
\begin{enumerate}
	\item $C$ has only nodes and elliptic $l$-fold points, $l \leq m$, as singularities.
	\item If $\mu$ is constant on the minimal elliptic subcurve $Z \subset C$, then 
	\[
	\abs{ \{ Z \cap \ol{C \backslash Z} \} \cup \{p_i: p_i \in Z \} } > m.
	\]
	Equivalently, by Lemma \ref{lemma: any E},
	\[
	\abs{ \{ E \cap \ol{C \backslash E} \} \cup \{p_i: p_i \in E \} } > m
	\]
	for any connected arithmetic genus one subcurve $E \subset C$ on which $\mu$ is constant.
	\item $\mu$ has no non-trivial infinitesimal automorphisms identical on $p_1, \ldots, p_n$ and $X$.  Equivalently, by Lemma \ref{lemma: map automorphisms} and condition (4),
	\begin{enumerate}
		\item If $C$ is nodal, then every rational component of $C$ on which $\mu$ is constant has at least three distinguished points.
		\item If $C$ has a (unique) elliptic $m$-fold point $p$, and $B_1, \ldots, B_m$ are the irreducible components of the minimal elliptic subcurve $Z \subset C$, then
		\begin{enumerate}
		\renewcommand{\labelenumiii}{(b\arabic{enumiii})}
			\item If $\mu$ is constant on $B_i$, then $B_i$ has at least 2 distinguished points.
			\item If $\mu$ is constant on $Z$, then some $B_i$ has at least 3 distinguished points.
			\item Every other component of $C$ on which $\mu$ is constant has at least 3 distinguished points.
		\end{enumerate}
	\end{enumerate}
	\item $\mu$ is bounded by the characteristic.
\end{enumerate}
\label{def: m-stable map}
\end{definition}

\begin{remark}
Unlike $m$-stability for curves, we do not need $m < n$ here since condition (2) automatically holds whenever $\mu$ is non-constant on the minimal elliptic subcurve $Z \subset C$.
\end{remark}

\begin{remark}
For $m=0$, this is the definition of a stable map given by Kontsevich.  For $X$ a point, this is the definition of an $m$-stable curve given by Smyth.  In parallel with Remark \ref{remark: m=0 curves}, we construct the space of $m$-stable maps for $m \geq 1$ since our proof uses properness of the Kontsevich space.
\label{remark: m=0 maps}
\end{remark}

We now define $m$-stable maps of class $\beta$.  If $k = \CC$, then we take any non-zero $\beta \in H_2(X,\ZZ)$, and say that a map $\mu: C \ra X$ is of class $\beta$ if $\mu_*([C]) = \beta$.  In general, we require a slight modification of this setup, well-known from \cite{BM}.  We define:
\begin{eqnarray*}
H_2(X) &=& \Hom_\ZZ(\Pic(X), \ZZ)  \\
H_2(X)^+ &=& \{ \beta \in H_2(X): \beta(L) \geq 0 \text{ for any ample line bundle } L \}.
\end{eqnarray*}
Define $\mu_*([C]) \in H_2(X)$ to be the map $L \mapsto \deg \mu^*(L)$; obviously $\mu_*([C]) \in H_2(X)^+$.  For instance, if $\mu: C \ra \PP^r$ is a map of degree $d$, then $\mu_*([C])$ is the map $\ZZ \ra \ZZ$ given by multiplication by $d$; in this case we write $\beta = d$.  We now take any non-zero $\beta \in H_2(X)^+$, and say that a map $\mu: C \ra X$ is of class $\beta$ if $\mu_*([C]) = \beta$.

\begin{definition}
\label{def: class beta}
Let $(C, p_1, \ldots, p_n)$ be a pointed genus one curve over an algebraically closed field $k$, and let $\mu: C \ra X$ be a map to a projective variety $X$.  Fix any non-zero $\beta \in H_2(X)^+$.  Then $\mu$ is said to be an \emph{$m$-stable map of class $\beta$} (or an \emph{$m$-stable map to $(X,\beta)$}) if $\mu$ is an $m$-stable map and $\mu$ is of class $\beta$.
\end{definition}

\section{Construction of $\sMbar_{1,n}^m(X,\beta)$}
\label{section: construction}

In Section \ref{subsection: definition of the moduli problem} we define the moduli problem of $m$-stable maps and show that it is representable by a Deligne-Mumford stack.  We prove the valuative criterion for properness in Section \ref{subsection: properness}.

\subsection{Definition of the moduli problem}
\label{subsection: definition of the moduli problem}
Let $m \geq 1$.  Fix a projective variety $X$ and a non-zero $\beta \in H_2(X)^+$.  For any scheme $S$, we define an \emph{$m$-stable $n$-pointed map to $X$ of class $\beta$ over $S$} to be a flat, proper family of genus one curves $\pi: \sC \ra S$ with $n$ sections $\sigma_1, \ldots, \sigma_n$, together with a map $\mu: \sC \ra X$ such that the restriction of $(\sC, \{ \sigma_i \}, \mu)$ to each geometric fiber is an $m$-stable $n$-pointed map to $X$ of class $\beta$.  A morphism between two $m$-stable maps $(\sC \ra S, \{ \sigma_i \}, \mu_\sC: \sC \ra X)$ and $(\sD \ra T, \{ \tau_i \}, \mu_\sD: \sD \ra X)$ is a Cartesian square
\begin{diagram}
\sC &\rTo^f& \sD  \\
\dTo&		 	 & \dTo	\\
S		&\rTo	 & T		\\
\end{diagram}
\noindent such that $f(\sigma_i) = \tau_i$ and $\mu_\sC = \mu_\sD \circ f$.  This defines a category of $m$-stable maps to $X$ of class $\beta$, denoted
\[
\sMbar_{1,n}^m(X,\beta),
\]
\noindent which is fibered in groupoids over the category of $k$-schemes.  By Lemma \ref{lemma: polarized}, our moduli problem admits a canonical polarization
\[
(\sC \ra S, \{ \sigma_i \}, \mu: \sC \ra X) \mapsto \omega_{\sC/S}(\sigma_1 + \ldots + \sigma_n) \otimes \mu^* \sO_X(3)
\]
\noindent and therefore satisfies descent over $(\Sch/k)$ endowed with the fppf-topology, i.e. $\sMbar_{1,n}^m(X,\beta)$ is a stack.

The main result of Part I of this paper is:

\begin{theorem}
\label{theorem: construction}
$\sMbar_{1,n}^m(X,\beta)$ is a proper Deligne-Mumford stack of finite type over $\Spec \ZZ[1/6]$.
\end{theorem}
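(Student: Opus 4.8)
The plan is to follow the standard template for constructing moduli stacks of maps (as in \cite{Ko, BM, FP}), adapting Smyth's arguments for $\sMbar_{1,n}(m)$, with the genuine new content concentrated in the valuative criterion for properness. First I would establish that $\sMbar_{1,n}^m(X,\beta)$ is a Deligne-Mumford stack of finite type. The canonical polarization $\omega_{\sC/S}(\sum \sigma_i) \otimes \mu^*\sO_X(3)$ (which must be checked to be relatively ample on each fiber --- this uses Lemma \ref{lemma: Gorenstein} to know $\omega_C$ is invertible, plus the $m$-stability conditions (3) to rule out the bad cases where degree zero is achieved on some component, exactly as in Smyth) lets one embed each $m$-stable map into a product $\PP^N \times X$ with fixed Hilbert polynomial. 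Hence $\sMbar_{1,n}^m(X,\beta)$ is a locally closed substack of a quotient $[H/\mathrm{PGL}_{N+1}]$ of an appropriate Hilbert scheme $H$ (cut out by the conditions that the curve have only nodes and elliptic $l$-fold points with $l \le m$, the genus-one-subcurve inequality, and the no-infinitesimal-automorphisms condition --- all of which are locally closed, the singularity condition being checkable on completed local rings). It is thus an algebraic stack of finite type over $\Spec\ZZ[1/6]$; that it is Deligne-Mumford follows because the automorphism group of each $m$-stable map is finite and reduced, which is precisely the content of condition (3) of Definition \ref{def: m-stable map} combined with Lemma \ref{lemma: map automorphisms} (here the $1/6$ enters, as in Remark \ref{remark: extra automorphisms}, to guarantee $\InfAut$ really vanishes). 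Separatedness would follow from the uniqueness half of the valuative criterion.

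Next, I would address properness via the valuative criterion over a DVR $\Delta$ with fraction field $K$: given an $m$-stable map over $\Spec K$ (equivalently over the generic point $\eta$ of $\Delta$), produce after finite base change a unique extension to an $m$-stable map over $\Delta$. Existence is the hard part. I would reduce first to the case where the domain of the generic fiber is \emph{smooth}: this is the extra step flagged in the introduction, and it is handled by normalizing the total space of the family after passing to the Kontsevich-stable limit and separating the components of the special fiber --- a step requiring care because, unlike in \cite{FP}, families with elliptic $m$-fold points do not normalize naively (one instead argues that a generic-fiber map with smooth source is, after the reductions, a map from a family whose only special-fiber singularities are mild enough to normalize, or one pulls back to an auxiliary family with smooth source). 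Once we have a family $\mu_\eta: C_\eta \to X$ with $C_\eta$ smooth genus one, I would apply properness of Kontsevich's $\sMbar_{1,n}(X,\beta)$ to get a stable limit, then run Smyth's prescribed sequence of blow-ups and contractions (Lemma \ref{lemma: contraction}) on the domain of the central fiber: iteratively, if the level of the central fiber is $\le m$ and the map is constant on the minimal elliptic subcurve $Z$, blow up the total space and contract $Z$ via a line bundle of the form $\omega_{\sC/\Delta}(D+\Sigma)$ to introduce an elliptic $l$-fold point. The key difference from \cite{S1} is that we may only contract $Z$ when $\mu$ is constant on it; if $\mu$ is non-constant on $Z$, then condition (2) is vacuous and we stop. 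So the process terminates either when $Z$ has $> m$ distinguished points, or when $\mu$ is non-constant on $Z$, and in both cases one checks that (after a standard final adjustment to kill remaining infinitesimal automorphisms, contracting the extra rational tails with too few distinguished points) the resulting central fiber is $m$-stable of class $\beta$; the class is preserved because contractions of subcurves on which $\mu$ is constant do not change $\mu_*([C])$.

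For uniqueness (hence separatedness), I would argue that any two $m$-stable limits over $\Delta$ of a given generic map become isomorphic after a common refinement: both dominate, after blow-ups, a common semistable model, and the contraction specified by Lemma \ref{lemma: contraction} applied to the appropriate line bundle is canonical (the line bundle $\omega_{\sC/\Delta}(D+\Sigma)$ is determined up to the choices, and $\pi$-semiampleness forces the contraction to factor through it), so the two limits agree; this is exactly Smyth's argument in \cite{S1} and goes through unchanged once one observes that on components where $\mu$ is non-constant, $\mu$ itself rigidifies the model. The main obstacle, as anticipated, is the reduction to the smooth-domain case: all the other ingredients are either cited from \cite{S1} or are the routine Hilbert-scheme bookkeeping of \cite{BM, FP}, but making precise how to normalize a one-parameter family whose generic fiber has smooth source while its special fiber may acquire elliptic $m$-fold points --- and verifying that this does not disturb the marked points, the map, or the class --- requires genuinely new argument, and this is where I would expect to spend most of the effort.
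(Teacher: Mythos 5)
Your overall architecture (Hilbert-scheme bookkeeping for the Deligne--Mumford and finite-type statements, canonical polarization, nodal reduction followed by Smyth's blow-up/contraction sequence with early termination when $\mu$ becomes non-constant on the minimal elliptic subcurve) matches the paper. But there is a genuine gap at the step you yourself flag as the crux: the reduction to families with smooth generic domain. You propose to achieve this by ``normalizing the total space of the family,'' and your phrasing (``a one-parameter family whose generic fiber has smooth source while its special fiber may acquire elliptic $m$-fold points'') mislocates the difficulty. The problem is not the special fiber; it is that the \emph{generic} fiber of a family in $\sMbar_{1,n}^m(X,\beta)$ may itself have an elliptic $l$-fold point or non-separating nodes, because the smooth-domain locus is not dense in the (reducible) moduli stack. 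Normalizing such a family separates the $l$ rational branches of the elliptic $l$-fold point, drops the arithmetic genus to zero, and leaves you with no evident way to reassemble an $m$-stable genus-one limit of the original family from limits of the pieces; the proposal does not supply the missing idea.

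The paper's mechanism is different and is the actual new content of the properness proof. First (Reduction (a)) it decomposes the generic fiber along its disconnecting nodes into the minimal elliptic subcurve $Z$ and rational trees $R_i$; the restrictions to the $R_i$ are Kontsevich stable, so their limits exist by properness of $\sMbar_{0,\ast}(X,\beta)$, and gluing reduces everything to the case where the generic domain is a minimal elliptic curve, i.e.\ $\omega_C\cong\sO_C$ by Lemma \ref{lemma: minimal}. Second (Reduction (b)) it proves that \emph{every} such map is smoothable: by Lemma \ref{lemma: smoothable} it suffices that $H^1(C,\mu^*\sO_X(1))=0$, which by Serre duality and $\omega_C\cong\sO_C$ reduces to $H^0(C,\mu^*\sO_X(-1))=0$, clear since this bundle has non-positive degree everywhere and negative degree somewhere. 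Hence the smooth-domain locus is dense in the closure of the relevant locus, and the valuative criterion need only be checked on families with smooth generic domain. Neither of these two steps appears in your proposal, and without them the existence half of the valuative criterion does not go through. Separately, for uniqueness the paper takes a shorter route than the one you sketch: rather than re-running Smyth's canonical-contraction argument, it pulls back $m+1$ general hyperplanes to produce $d(m+1)$ auxiliary sections (Lemma \ref{lemma: reduction}), turning both families into $m$-stable \emph{curves} and citing separatedness of $\sMbar_{1,n+d(m+1)}(m)$; your version could be made to work but would require redoing the rigidification on components where $\mu$ is non-constant.
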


Before proving Theorem \ref{theorem: construction}, we verify a lemma (analogous to Lemma 4.2 in \cite{BM}) that will allow us to reduce certain statements about families of $m$-stable maps to the corresponding statements about families of $m$-stable curves.  Specifically, we will use it to derive deformation-openness of $m$-stable maps (Lemma \ref{lemma: def-open}) and the uniqueness portion of the valuative criterion for properness of $\sMbar_{1,n}^m(X,\beta)$ (Section \ref{subsubsection: uniqueness of limits}) from the corresponding results in \cite{S1}.

\begin{lemma}
Assume $\ch k \neq 2,3$.  Let $(\sC, \{\sigma_i\}, \mu_\sC)$ and $(\sD, \{\tau_i\}, \mu_\sD)$ be families of $n$-pointed $m$-stable maps to $X$ over a base $k$-scheme $S$, and let $x \in S(\bar{k})$ be a geometric point of $S$.  Then there exists an \etale \ neighborhood $S' \ra S$ of $x$ and disjoint unmarked smooth sections $\{ \sigma_{j,k} \}$ on $\sC \times_S S'$ and $\{ \tau_{j,k} \}$ on $\sD \times_S S'$, $1 \leq j \leq m+1$, $1 \leq k \leq d$, such that $(\sC \times_S S', \{\sigma_i, \sigma_{j,k}\})$ and $(\sD \times_S S', \{\tau_i, \tau_{j,k}\})$ are $m$-stable $(n + d(m+1))$-pointed curves over $S'$.
\label{lemma: reduction}
\end{lemma}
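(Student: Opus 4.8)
The plan is to adapt the proof of Lemma~4.2 in \cite{BM}: the extra sections will be cut out by general hyperplane sections of $X$, and the point is that, since $\mu_\sC$ and $\mu_\sD$ are already $m$-stable, exactly $m+1$ hyperplanes force every irreducible component of the domain --- in particular every component on which the map is non-constant, where $m$-stability of the map imposes nothing --- to acquire enough distinguished points for the augmented pointed curve to be $m$-stable.

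First I would reduce to $X = \PP^r$ via the very ample bundle $\sO_X(1)$ and put $d := \beta(\sO_X(1))$, so that $\mu_\sC$ and $\mu_\sD$ have fibrewise degree $d$; after replacing $S$ by a connected \'etale neighborhood of $x$ we may take $d$ constant. Write $C = \sC_x$, $C' = \sD_x$, $\mu = \mu_{\sC,x}$, $\mu' = \mu_{\sD,x}$. Since an $m$-stable map is bounded by the characteristic (Definition~\ref{def: m-stable map}(4)), $\mu$ and $\mu'$ are separable and a general hyperplane pulls back to a reduced divisor. I would then pick $H_1,\dots,H_{m+1} \subset \PP^r$ so that, for \emph{both} $\mu$ and $\mu'$: (i) each $\mu^*(H_j)$ is a sum of $d$ distinct reduced points in the smooth, unmarked locus of the domain (a general $H_j$ avoids the finitely many images of singular and marked points and is transverse to the map); (ii) for $j \neq j'$, $\mu^*(H_j)$ and $\mu^*(H_{j'})$ are disjoint, since $\mu^*(H_j \cap H_{j'})$ is the preimage of a general codimension-two linear space and hence misses the at most one-dimensional set $\mu(C)$; and (iii) each $H_j$ meets the image of every irreducible component of the domain on which the map is non-constant.

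The key step is to verify that, writing $\{q_{j,k}\}$ ($1 \le j \le m+1$, $1 \le k \le d$) for the resulting points, $(C,\{p_i\}\cup\{q_{j,k}\})$ --- and likewise $C'$ --- is an $m$-stable curve (Definition~\ref{def: m-stable curve}). Condition~(1) is unaffected by marking points, and $m < n + d(m+1)$ since $d,m \ge 1$. For conditions~(2) and~(3) I argue componentwise: a component on which the map is constant already satisfies its distinguished-point bound by $m$-stability of the map (Definition~\ref{def: m-stable map}(2),(3)), while a component on which the map is non-constant gains at least $m+1 \ge 2$ new distinguished points by~(iii). Since the largest bound occurring in Lemma~\ref{lemma: curve automorphisms} is three and $m \ge 1$, this suffices in every case (such a component, if a proper subcurve, already carries at least one distinguished point; if equal to the whole curve, it is a smooth genus-one or irreducible rational curve, on which the new marked points already kill all automorphisms), and in particular $Z$ acquires more than $m$ distinguished points whenever the map is non-constant on $Z$. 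Feeding this into Lemmas~\ref{lemma: any E} and~\ref{lemma: curve automorphisms} yields conditions~(2) and~(3).

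Finally, over the base: the section of $\sO_X(1)$ defining $H_j$ pulls back under $\mu_\sC$ to a section of $\mu_\sC^*\sO_X(1)$ that is nonzero on every component of $\sC_x$, so $D_j := \mu_\sC^{-1}(H_j)$ is an effective Cartier divisor, finite flat of degree $d$ over $S$ near $x$; by~(i) it is \'etale over $S$ at $x$ and contained in the smooth unmarked locus, and by~(ii) the $D_j$ are pairwise disjoint there. These being open conditions, they hold over a Zariski neighborhood of $x$; after passing to a further \'etale neighborhood $S' \to S$ of $x$ --- the fiber product of those needed for $\sC$ and for $\sD$ --- each finite \'etale degree-$d$ cover $D_j \times_S S' \to S'$ splits into $d$ disjoint sections $\sigma_{j,k}$ (and likewise into $\tau_{j,k}$ on $\sD$). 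Since the augmented fiber over $x$ is $m$-stable, openness of $m$-stability for curves (from \cite{S1}) lets us shrink $S'$ so that $(\sC \times_S S', \{\sigma_i,\sigma_{j,k}\})$ and $(\sD \times_S S', \{\tau_i,\tau_{j,k}\})$ are families of $m$-stable $(n+d(m+1))$-pointed curves. I expect the componentwise check of the previous paragraph --- handling the sub-conditions at the elliptic $m$-fold point in all constant/non-constant configurations and confirming that $m+1$ hyperplanes is exactly what is needed --- to be the main obstacle; choosing the $H_j$ in general position and the flatness bookkeeping for their pullbacks are routine.
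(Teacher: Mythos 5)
Your proposal is correct and follows essentially the same route as the paper's proof: reduce to $X=\PP^r$, pull back $m+1$ general hyperplanes to obtain $d(m+1)$ disjoint smooth unmarked sections, and deduce from Lemmas \ref{lemma: map automorphisms} and \ref{lemma: curve automorphisms} that the augmented pointed curves are $m$-stable. The paper's write-up merely leaves implicit the componentwise count of distinguished points and the spreading-out over an \'etale neighborhood, both of which you carry out explicitly and correctly.
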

\begin{proof}
Assume without loss of generality that $\mu_{\sC*}([\sC]) = \mu_{\sD*}([\sD]) =: \beta$.  Choosing an embedding $\iota: X \hra \PP^r$ and letting $d = \iota_*(\beta)$, we reduce to the case $X = \PP^r$ and $\beta = d$.  Since $\mu_\sC$ and $\mu_\sD$ are separable, there exists an \etale \ neighborhood $S' \ra S$ of $x$ and hyperplanes $H_1, \ldots, H_{m+1}$ in $\PP^r$ which intersect the maps $\mu_\sC$ and $\mu_\sD$ transversely at unmarked smooth points, i.e.\ for each $1 \leq j \leq m+1$, the divisors $\mu_\sC^*(H_j)$ and  $\mu_\sD^*(H_j)$ consist of disjoint unmarked smooth sections over $S'$.  Write $\mu_\sC^*(H_j) = \sum \sigma_{j,k}$ and $\mu_\sD^*(H_j) = \sum \tau_{j,k}$, $1 \leq k \leq d$.  Then by Lemmas \ref{lemma: curve automorphisms} and \ref{lemma: map automorphisms},
\[
(\sC \times_S S', \{ \sigma_i, \sigma_{j,k} \})
\]
\noindent and
\[
(\sD \times_S S', \{ \tau_i, \tau_{j,k} \})
\]
\noindent are $m$-stable $(n + d(m+1))$-pointed curves.
\end{proof}

We now proceed to the proof of Theorem \ref{theorem: construction}.  We show properness in Section \ref{subsection: properness}.  We show that our moduli problem is canonically polarized, bounded, and deformation-open in Lemmas \ref{lemma: polarized}, \ref{lemma: bounded}, and \ref{lemma: def-open}, respectively.  The rest of the argument is standard and given below.

\begin{proof}
By Theorem 4.21 in \cite{DM}, it suffices to show that for every $k$-scheme $S$ with $\ch k \neq 2,3$, the functor $\uIsom_S$ is representable by a quasi-compact scheme of finite type and unramified over $S$, and that $\sMbar_{1,n}^m(X,\beta)$ admits a smooth atlas of finite type.  Let $x = (\sC, \{ \sigma_i \}, \mu_\sC)$ and $y = (\sD, \{ \tau_i \}, \mu_\sD)$ be $m$-stable $n$-pointed maps to $X$ of class $\beta$ over a $k$-scheme $S$.  Then $x$ and $y$ are canonically polarized by Lemma \ref{lemma: polarized}.  There is a natural morphism of $S$-schemes $\sC \ra X \times S$.  Applying Grothendieck's results on the representability of the Hilbert scheme to the graph of any base change of this morphism (see, e.g., \cite{K2}, Theorem 1.10), we obtain that the functor $\uIsom_S(x,y)$ is representable by a quasi-projective scheme $\bIsom_S(x,y)$ over $S$.  To check that $\bIsom_S(x,y)$ is unramified, it is enough to check it over an algebraically closed field $k$, where it is automatic since an $m$-stable map has no infinitesimal automorphisms by definition.  This gives the first assertion above.

We now construct a smooth atlas over $\sMbar_{1,n}^m(X,\beta)$ via a standard argument using the Hilbert scheme.  We can obviously reduce to the case $(X,\beta) = (\PP^r,d)$.  Using Lemmas \ref{lemma: bounded} and \ref{lemma: def-open}, the argument is now exactly the same as that in \cite{FP}.  Let $(C, \{p_i\}, \mu)$ be any $m$-stable $n$-pointed map to $\PP^r$, and let $L$ and $N$ be as in Lemma \ref{lemma: bounded}.  Let $D = \deg(L^N)$ and $R = h^0(C,L^N)-1$; by Riemann-Roch, $R=D-1$.  Let
\[
\sH = \sH_{(D,d),1}(\PP^R \times \PP^r)
\]
\noindent be the Hilbert scheme of curves of bidegree $(D,d)$ and arithmetic genus one in $\PP^R \times \PP^r$.  Note that $\sH$ is of finite type since we have fixed $d$.  By Lemma \ref{lemma: bounded}, after choosing a basis for $H^0(C,L^N)$, any $m$-stable $n$-pointed map $\mu: (C, p_1, \ldots, p_n) \ra (\PP^r,d)$ corresponds to a point in
\[
\sH \times (\PP^R \times \PP^r)^n
\]
\noindent via the morphism $(\iota_{\sL^N}, \mu)$.  Define the closed incidence subscheme
\[
I \subset \sH \times (\PP^R \times \PP^r)^n
\]
\noindent of tuples $([C], p_1, \ldots, p_n)$ with $p_1, \ldots, p_n \in C$.  By Lemma \ref{lemma: def-open} and standard results, there is open subscheme $V \subset U$ of $([C], p_1, \ldots, p_n)$ satisfying the following:
\begin{enumerate}
\renewcommand{\labelenumi}{(\roman{enumi})}
  \item The points $p_1, \ldots, p_n$ lie in the smooth locus of $C$.
	\item The natural projection $\pi_1: C \ra \PP^R$ is a non-degenerate embedding.
	\item The natural projection $\pi_2: C \ra \PP^r$ is an $m$-stable $n$-pointed map.
\end{enumerate}
Finally, using representability of the Picard functor of the universal curve $\sU \ra \sH$ (see, e.g., \cite{MFK}, Proposition 5.1), there is a locally-closed subscheme $U \subset V$ of $([C], p_1, \ldots, p_n)$ such that the line bundles $(\omega_C(p_1 + \ldots + p_n) \otimes \pi_2^* \sO_{\PP^r}(3))^N$ and $\pi_1^*\sO_{\PP^R}(1)$ are isomorphic.  Then $U$ is Zariski-locally a $PGL_{R+1}$-torsor over $\sMbar_{1,n}^m(X,\beta)$ (corresponding to the choice of basis of $H^0(C,L^N)$), so it is smooth and surjective over $\sMbar_{1,n}^m(X,\beta)$.
\end{proof}

\begin{lemma}[Canonical polarization]
For any $m$-stable map $\mu: (\sC, \sigma_1, \ldots, \sigma_n) \ra X$ over a scheme $S$, the line bundle
\[
\omega_{\sC/S}(\sigma_1 + \ldots + \sigma_n) \otimes \mu^* \sO_X(3)
\]
\noindent is relatively ample.
\label{lemma: polarized}
\end{lemma}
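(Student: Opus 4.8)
The plan is to reduce the relative statement to a fibrewise one and then to a positivity count on the irreducible components of a single fibre. Since $\pi\colon\sC\to S$ is proper and flat, a line bundle on $\sC$ is relatively ample if and only if its restriction to every geometric fibre is ample, so it is enough to fix an $m$-stable map $\mu\colon(C,p_1,\ldots,p_n)\to X$ over an algebraically closed field and to prove that $L:=\omega_C(p_1+\ldots+p_n)\otimes\mu^*\sO_X(3)$ is ample on $C$. Now $C$ has only nodes and elliptic $l$-fold points ($l\le m$) as singularities, hence is Gorenstein by Lemma~\ref{lemma: Gorenstein}, so $\omega_C$, and therefore $L$, is an honest line bundle on the projective curve $C$; by the numerical criterion for ampleness on a curve it then suffices to show $\deg_F L>0$ for every irreducible component $F\subset C$.

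The crux is the computation of $\deg_F\omega_C$. For a component $F$, set $e_F:=\deg_F\mu^*\sO_X(1)\ge 0$, so that $\deg_F\mu^*\sO_X(3)=3e_F$ and $e_F\ge 1$ exactly when $\mu$ is non-constant on $F$; let $\delta(F)$ denote the number of distinguished points of $F$. I would combine Lemma~\ref{lemma: Gorenstein}(2) — each node of $C$ lying on $F$ contributes $+1$ to $\deg_F\omega_C$, while a branch of $F$ through an elliptic $l$-fold point contributes $+2$ — with the fundamental decomposition $C=Z\cup R_1\cup\ldots\cup R_k$ (Lemma~\ref{lemma: fundamental}) and the classification of the minimal elliptic subcurve (Lemma~\ref{lemma: minimal}, which gives $\omega_Z\cong\sO_Z$ in all four cases). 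Running through the possible component types, this yields: $\deg_F\bigl(\omega_C(p_1+\ldots+p_n)\bigr)=\delta(F)-2$ if $F$ is a smooth rational component not passing through an elliptic $l$-fold point (this includes a $\PP^1$ sitting inside a ring-type $Z$); $\deg_F\bigl(\omega_C(p_1+\ldots+p_n)\bigr)=\delta(F)-1$ if $F=B_i$ is a branch through an elliptic $l$-fold point; and $\deg_F\bigl(\omega_C(p_1+\ldots+p_n)\bigr)$ equals the level $\ell$ of $C$ if $F=Z$ is itself an irreducible minimal elliptic subcurve.

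With these formulas the positivity is immediate. If $\mu$ is non-constant on $F$, then $e_F\ge 1$, and in every case $\deg_F\bigl(\omega_C(p_1+\ldots+p_n)\bigr)\ge -1$ (a proper smooth rational component carries at least one node, a ring component carries its two ring nodes, a branch $B_i$ carries the elliptic point, and the level is non-negative), so $\deg_F L\ge -1+3=2>0$. If $\mu$ is constant on $F$, then $e_F=0$ and the relevant piece of the $m$-stability hypothesis makes the remaining term positive: for $F=Z$, condition~(2) of Definition~\ref{def: m-stable map} gives $\ell>m\ge 0$ for the level $\ell$, hence $\deg_F L=\ell\ge 1$; for a smooth rational $F$ that is not a branch through an elliptic point, condition~(3) — every such component on which $\mu$ is constant has at least three distinguished points (this is case~(a) when $C$ is nodal, and case~(b3) otherwise) — gives $\delta(F)\ge 3$, hence $\deg_F L=\delta(F)-2\ge 1$; and for $F=B_i$, condition~(3) in the form that each $B_i$ on which $\mu$ is constant has at least two distinguished points gives $\delta(B_i)\ge 2$, hence $\deg_F L=\delta(B_i)-1\ge 1$. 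In all cases $\deg_F L>0$, so $L$ is ample on $C$.

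I expect the only genuine work to lie in the second paragraph: one has to enumerate carefully all the component types occurring in an $m$-stable genus one curve (rational tails, the four shapes of the minimal elliptic subcurve, and the branches through an elliptic $l$-fold point) and to keep straight which distinguished points of $F$ feed into $\deg_F\omega_C$ — namely the nodes and the elliptic branch points — and which feed only into the divisor $p_1+\ldots+p_n$, namely the marked points. Since this structure is entirely governed by Smyth's Lemmas~\ref{lemma: Gorenstein}, \ref{lemma: fundamental}, and \ref{lemma: minimal}, once the degree formulas are pinned down the conclusion follows by inspection, with no estimates required.
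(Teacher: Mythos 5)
Your proposal is correct and follows essentially the same route as the paper: reduce to a geometric fibre, check positive degree of $\omega_C(p_1+\ldots+p_n)\otimes\mu^*\sO_X(3)$ on each irreducible component, and split into cases according to whether $\mu$ is constant on the component and whether it is a rational tail, a branch through the elliptic $l$-fold point, or the minimal elliptic subcurve, using Lemma~\ref{lemma: Gorenstein} for the degree of $\omega_C$ near the elliptic point and the stability conditions for the lower bounds on distinguished points. Your degree bookkeeping is slightly more explicit than the paper's (which just bounds $\deg_F\omega_C(p_1+\ldots+p_n)\ge -2$ in the non-constant case), but the substance is identical.
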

\begin{proof}
Given an $m$-stable map $\mu: (C, p_1, \ldots, p_n) \ra X$, we need to show that the line bundle
\[
\sL = \omega_C(p_1 + \ldots + p_n) \otimes \mu^* \sO_X(3)
\]
\noindent is ample.  Let $F \subset C$ be any irreducible component.  If $\mu$ is non-constant on $F$, then $\omega_C(p_1 + \ldots + p_n)$ has degree at least $-2$ on $F$ and $\mu^* \sO_X(3)$ has degree at least 3 on $F$, so $\sL$ has positive degree on $F$.  Now suppose $\mu$ is constant on $F$.  Then $\mu^* \sO_X(3)$ has degree 0 on $F$, so $\sL$ has positive degree on $F$ if the genus of $F$ is at least two.  Let $\pi: \tilde{C} \ra C$ be the normalization, and let $\tilde{F}$ be the proper transform of $F$.  We have the following cases:
\begin{itemize}
	\item Suppose $F$ has genus one.  Then by the stability condition $F$ has at least two distinguished points, so $\sL$ has degree at least $2g-2+2 = 2$ on $F$.
	\item Suppose $F$ has genus zero and meets the rest of the curve only nodally (i.e.\ $F$ is not a component of the minimal elliptic subcurve).  Then by the stability condition $F$ has at least three distinguished points, so $\sL$ has degree at least $2g-2+3 = 1$ on $F$.
	\item Suppose $F$ has genus zero and meets the rest of the curve at an elliptic $l$-fold point $p$ for some $l \leq m$ (i.e.\ $F$ is a component of the minimal elliptic subcurve).  Then $F$ has at least one marked point $q$, so by Lemma \ref{lemma: Gorenstein}, $\pi^* (\omega_C|_F) = \omega_{\tilde{F}}(q+2r)$, where $r$ is the unique point on $\tilde{F}$ with $\pi(r) = p$.  Thus, $\sL$ has positive degree on $F$.
\end{itemize}
\noindent So $\omega_C$ is ample on each component $F$ of $C$, and is thus ample.
\end{proof}

\begin{lemma}[Boundedness]
Let $(C, p_1, \ldots, p_n, \mu)$ be any $m$-stable map to $(\PP^r,d)$, and define the line bundle
\[
L := \omega_C(p_1 + \ldots + p_n) \otimes \mu^* \sO_{\PP^r}(3).
\]
\noindent Then $L^N$ is very ample on $C$ for any $N > n + \max(2m,4)$.
\label{lemma: bounded}
\end{lemma}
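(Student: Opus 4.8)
The plan is to establish very ampleness of $L^N$ from the standard cohomological criteria, feeding in the component-by-component degree estimates already proved in Lemma~\ref{lemma: polarized}. This parallels the boundedness step of \cite{S1}, the only new ingredient being the twist by $\mu^*\sO_{\PP^r}(3)$, which can only raise degrees on non-constant components and so introduces no new difficulty.

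Recall the criterion: on a proper scheme over $k$, a line bundle $\sM$ is very ample as soon as $H^0(C,\sM) \to H^0(Z,\sM|_Z)$ is surjective for every zero-dimensional $Z \subseteq C$ of length at most two --- this forces $\sM$ globally generated and the induced morphism injective and unramified, hence a closed immersion since $C$ is proper. Such surjectivity follows from $H^1(C, \sM \otimes \sI_Z) = 0$, and for this I would invoke the standard vanishing statement on the reduced connected Gorenstein curve $C$: a torsion-free rank-one sheaf $\sN$ on $C$ has $H^1(C,\sN) = 0$ provided $\deg(\sN|_{C'}) \geq 2 p_a(C') - 1$ for every subcurve $\emptyset \neq C' \subseteq C$ (proved by a standard induction on the number of irreducible components). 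Since $C$ has arithmetic genus one, every connected subcurve $C'$ has $p_a(C') \leq 1$, so the right-hand side is at most $1$.

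Now feed in the degrees. By the proof of Lemma~\ref{lemma: polarized}, $\deg(L|_F) \geq 1$ for every irreducible component $F \subseteq C$, so $\deg(L^N|_{C'}) \geq N$ for every subcurve $C' \neq \emptyset$, the minimum being attained only on low-degree pieces such as the rational bridge components of the minimal elliptic subcurve. When $Z$ lies in the smooth locus of $C$, $\sI_Z$ is a line bundle of degree $\geq -2$ on the relevant components, and a careful accounting of how $Z$, the $n$ marked points, and the attaching nodes distribute over the subcurves in the induction shows that $H^1(C, L^N \otimes \sI_Z) = 0$ once $N$ exceeds a bound of the form $n + (\text{absolute constant})$, with the analogous statement at the nodes of $C$ obtained after normalizing there.

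The genuinely delicate case --- the one where the term $2m$ appears and which I expect to be the main obstacle --- is the elliptic $m$-fold point $p$ of $C$, when there is one. Here the Zariski tangent space of $C$ at $p$ is $(m-1)$-dimensional, and one must verify that the sections of $L^N$ separate the length-two subschemes supported at $p$, i.e.\ realize all $m$ tangent directions of the branches through $p$; the ideal sheaf of such a subscheme is not locally free, so the naive degree count fails, and one instead passes to the partial normalization $\hat{C} \to C$ separating all $m$ branches at $p$ and estimates there. By Lemma~\ref{lemma: Gorenstein}, the rational bridge components $B_1, \dots, B_m$ of the minimal elliptic subcurve each carry only a forced double point over $p$ together with a bounded number of other distinguished points, so to force the sections of $L^N$ to pin down all $m$ tangent directions at $p$ simultaneously one needs $\deg(L^N|_{B_i})$ to exceed roughly $2m$ on each $B_i$. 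Combining this with the marked-point contribution produces the stated bound $N > n + \max(2m,4)$, the term $4$ covering the small cases $m = 1, 2$ and the case in which the minimal elliptic subcurve is irreducible of arithmetic genus one. Once this local estimate at $p$ is carried out, the global very ampleness follows by assembling the local separation properties in the usual way.
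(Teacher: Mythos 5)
The paper's own proof of this lemma is a single sentence: it is ``identical to that in [S1]'', i.e.\ it imports Smyth's boundedness argument for $m$-stable curves wholesale, the point being (as you correctly observe at the outset) that the twist by $\mu^*\sO_{\PP^r}(3)$ restores positivity exactly on those components where map-stability is weaker than curve-stability. So your general framework --- a cohomological very-ampleness criterion on the Gorenstein genus-one curve $C$, fed by the component-wise degree bounds from Lemma~\ref{lemma: polarized} --- is the right one and is in the spirit of what the paper is citing.

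The gap is that the two steps that actually produce the numerical bound $N > n + \max(2m,4)$ are not carried out, and the heuristic you offer for the decisive one does not hold up. First, away from the elliptic $m$-fold point you assert that ``a careful accounting\dots shows'' vanishing once $N$ exceeds $n + (\text{absolute constant})$; but with the inputs you quote ($\deg L|_F \geq 1$ on every component, $p_a(C') \leq 1$ for every subcurve, $\sI_Z$ of total degree $-2$), your own vanishing criterion already gives $H^1(C, L^N \otimes \sI_Z) = 0$ for $N \geq 3$ with no $n$ anywhere, so it is unclear what the accounting is accounting for or where the $n$ is supposed to enter. Second, and more seriously, at the elliptic $m$-fold point $p$ you derive the term $2m$ from the claim that pinning down the $m$ tangent directions forces $\deg(L^N|_{B_i}) > 2m$ \emph{on each branch} $B_i$. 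No mechanism for this is given, and Lemma~\ref{lemma: Gorenstein} suggests the opposite: the adjustment at $p$ pulls back to the divisor $2(r_1 + \dots + r_m)$ on the normalization, so imposing conditions at $p$ costs degree $2$ per branch, not $2m$; if a factor of $m$ enters, it must come from a global count on the minimal elliptic subcurve $Z$ (where $\omega_Z \cong \sO_Z$, so $h^0(M|_Z) = \deg M|_Z$ and the number of branches is $m$), and you do not identify any such source. Since you yourself flag this as ``the main obstacle'' and then resolve it only with ``roughly'' and ``I expect,'' the argument does not close; what is missing is precisely the explicit separation-of-points-and-tangent-vectors computation at $p$ that constitutes the actual content of the proof in [S1].
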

\begin{proof}
The proof is identical to that in \cite{S1}.
\end{proof}

\begin{lemma}[Deformation-openness]
Let $S$ be a connected Noetherian scheme.  Let $\pi: \sC \ra S$ be any flat family of genus one curves over $S$ with sections $\sigma_1, \ldots, \sigma_n$, and let $\mu: \sC \ra X$ be a morphism.  Let $\beta \in H_2(X)^+$.  Then the set
\[
T := \{ s \in S: \mu_{\bar{s}} \text{ is } m\text{-stable of class } \beta \}
\]
\noindent is Zariski-open in $S$.
\label{lemma: def-open}
\end{lemma}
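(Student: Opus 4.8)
The plan is to reduce the statement for maps to the corresponding statement for $m$-stable curves, which is already established in \cite{S1}, using Lemma \ref{lemma: reduction}. The point is that each of the three conditions in Definition \ref{def: m-stable map} is individually either a Hilbert-scheme–type openness statement or follows from deformation-openness of $m$-stability for curves after adding the auxiliary sections coming from hyperplane pullbacks.

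First I would handle the class-$\beta$ condition and the singularity condition (1). The locus where $\mu_{\bar s}$ has class $\beta$ is open and closed on each connected component of $S$ (the intersection numbers $\mu_{\bar s}^*(L)$ are locally constant), so we may assume $\mu$ has constant class $\beta$ throughout, and then reduce to $(X,\beta)=(\PP^r,d)$ by choosing an embedding $\iota: X \hra \PP^r$. Condition (1), that the fibers $\sC_{\bar s}$ have only nodes and elliptic $l$-fold points with $l \le m$ as singularities, is Zariski-open in $S$ by \cite{S1} (this is precisely deformation-openness of the singularity type for the family $\sC \ra S$, independent of the map). Likewise, condition (4), that $\mu$ is bounded by the characteristic, is automatic in characteristic zero and otherwise only depends on $d = \deg \mu_{\bar s}^*\sO_{\PP^r}(1)$, which is locally constant once the class is fixed; so after passing to the open-closed locus of constant class this condition is either vacuous or defines an open (indeed open-closed) subset.

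Next, the main content: conditions (2) and (3) (equivalently (3a), (3b1)–(3b3)). Here is where I would invoke Lemma \ref{lemma: reduction}. The issue is that these conditions only constrain the components of $\sC_{\bar s}$ on which $\mu_{\bar s}$ is \emph{constant}, so one cannot directly cite deformation-openness for curves. The fix is exactly the device used in Lemma \ref{lemma: reduction}: working étale-locally on $S$ near a given geometric point $x \in T$, after first shrinking $S$ to the open locus where conditions (1), (4) and the class condition hold, choose hyperplanes $H_1, \ldots, H_{m+1} \subset \PP^r$ meeting $\mu$ transversely at unmarked smooth points, giving $d(m+1)$ extra disjoint sections $\{\sigma_{j,k}\}$. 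On a component $F$ of a fiber on which $\mu$ is constant, none of these extra sections lie on $F$; on a component where $\mu$ is non-constant, the number of these extra distinguished points only increases conditions (2) and (3) vacuously. Hence by Lemmas \ref{lemma: curve automorphisms} and \ref{lemma: map automorphisms}, $\mu_{\bar s}$ satisfies (2) and (3) if and only if the pointed curve $(\sC_{\bar s}, \{\sigma_i, \sigma_{j,k}\})$ is an $m$-stable $(n+d(m+1))$-pointed curve. Deformation-openness of $m$-stability for curves (\cite{S1}) then gives that this locus is Zariski-open in $S'$, and since $S' \ra S$ is étale and this holds in a neighborhood of each point of $T$, the set $T$ itself is Zariski-open in $S$.

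The step I expect to be the main obstacle is the second paragraph of the reduction: verifying that the auxiliary sections $\{\sigma_{j,k}\}$ can be chosen uniformly (étale-locally) over the base so that they remain disjoint, unmarked, smooth, and transverse to $\mu$ on \emph{every} nearby fiber, and that adding them genuinely converts the ``$\mu$-constant component'' bookkeeping of Definition \ref{def: m-stable map} into the plain distinguished-point count of Definition \ref{def: m-stable curve}. This is essentially the content of Lemma \ref{lemma: reduction}; once that is in hand, the remainder is a routine patching of openness statements.
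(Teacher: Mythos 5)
Your proposal is correct and follows essentially the same route as the paper: reduce the class condition to local constancy on the connected base, use Lemma \ref{lemma: reduction} étale-locally to identify $T$ with the locus where the augmented $(n+d(m+1))$-pointed curve is $m$-stable, cite deformation-openness of $m$-stable curves from \cite{S1}, and descend openness along the fpqc cover $\sqcup_x U_x \ra S$. The only difference is that you treat conditions (1) and (4) separately before invoking the hyperplane-section trick, whereas the paper absorbs all conditions at once into the single equality of loci furnished by Lemma \ref{lemma: reduction}; both are fine.
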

\begin{proof}
The class of $\mu$ is constant on $S$, so it suffices to show that the set
\[
T := \{ s \in S: \mu_{\bar{s}} \text{ is an } m\text{-stable map} \}
\]
\noindent is Zariski-open in $S$.  Choose any geometric point $x \in S(\bar{k})$.  By Lemma \ref{lemma: reduction}, there exists an \etale \ neighborhood $U_x \hra S$ of $x$ and disjoint unmarked smooth sections $\{ \sigma_{j,k}^x \}$ of $\sC \times_S U_x$ such that
\[
T \times_S U_x = \{ s \in U_x: (\sC_{\bar{s}}, \{ \sigma_i(\bar{s}), \sigma_{j,k}^x(\bar{s}) \} ) \text{ is an } m\text{-stable curve} \}.
\]
\noindent By deformation-openness of $m$-stable curves (\cite{S1}, Lemma 3.10), the morphism
\[
T \times_S U_x \hra U_x
\]
\noindent is an open immersion.  Hence, setting $S' := \sqcup_x U_x$, the induced morphism
\[
T \times_S S' \hra S'
\]
\noindent is open, so $T \hra S$ is open since $S' \ra S$ is fpqc (\cite{EGAIV2}, Proposition 2.7.1).
\end{proof}

\subsection{Valuative criterion for properness}
\label{subsection: properness}
Choosing an embedding $\iota: X \hra \PP^r$ and letting $d = \iota_*(\beta)$, we easily reduce to the case $X = \PP^r$ and $\beta = d$.  We now prove the valuative criterion for properness \cite{LMB} of $\sMbar_{1,n}^m(\PP^r,d)$:

\begin{theorem}[Properness of $\sMbar_{1,n}^m(\PP^r,d)$]
\label{theorem: properness}
\mbox{}
Let $\Delta$ be the spectrum of a discrete valuation ring with algebraically closed residue field $k$ such that $\ch k \neq 2, 3$, and let $\eta \in \Delta$ be the generic point.
\begin{enumerate}
	\item (Existence of $m$-stable limits) If $(\sC, \sigma_1, \ldots, \sigma_n, \mu)|_\eta$ is an $n$-pointed $m$-stable map to $\PP^r$ over $\eta$, then there exists a finite base-change $\Delta' \ra \Delta$ and an $m$-stable map $(\sC' \ra \Delta', \sigma_1', \ldots, \sigma_n', \mu')$ to $\PP^r$ such that
	\[
	(\sC', \sigma_1', \ldots, \sigma_n')|_{\eta'} = (\sC, \sigma_1, \ldots, \sigma_n)|_\eta \times_\eta \eta'
	\]	
	\noindent and
	\[
	(\sC'|_{\eta'} \ra \sC|_\eta \ras{\mu} \PP^r)|_{\eta'} = \mu'|_{\eta'}.
	\]
	
	\item (Uniqueness of $m$-stable limits) Suppose that $(\sC \ra \Delta, \sigma_1, \ldots, \sigma_n, \mu_\sC)$ and $(\sD \ra \Delta, \tau_1, \ldots, \tau_n, \mu_\sD)$ are $m$-stable maps to $(\PP^r, d)$.  Then any isomorphism over the generic fiber
	\[
	(\sC, \sigma_1, \ldots, \sigma_n, \mu_\sC)|_\eta \cong (\sD, \tau_1, \ldots, \tau_n, \mu_\sD)|_\eta
	\]
	\noindent extends to an isomorphism over $\Delta$:
	\[
	(\sC, \sigma_1, \ldots, \sigma_n, \mu_\sC) \cong (\sD, \tau_1, \ldots, \tau_n, \mu_\sD).
	\]
\end{enumerate}
\end{theorem}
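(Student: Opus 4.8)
The plan is to separate the two assertions and, following the structure of \cite{S1}, deduce uniqueness formally from the curve case while proving existence by a careful blow-up/contraction argument on the domain. For uniqueness, the idea is to apply Lemma \ref{lemma: reduction}: given the two families $\sC, \sD$ over $\Delta$ and a generic isomorphism, pass to an \etale\ cover of the generic point where we may add $d(m+1)$ disjoint unmarked smooth sections (pulled back from hyperplane sections of the maps) so that both become families of $m$-stable \emph{curves}. The generic isomorphism respects the maps $\mu_\sC, \mu_\sD$, hence respects these extra sections, so it is an isomorphism of pointed $m$-stable curves over $\eta$. By the uniqueness portion of the valuative criterion for $\sMbar_{1,n}(m)$ (\cite{S1}), this extends to an isomorphism over $\Delta$, and since the extra sections were $\mu$-defined it automatically intertwines $\mu_\sC$ and $\mu_\sD$ on the central fiber. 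One subtlety is that adding sections requires a base change and an \etale\ localization, but since we only want an isomorphism (which we already know exists generically and only need to extend), descent along the cover recovers the isomorphism over $\Delta$ itself.

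For existence of $m$-stable limits, I would proceed in three stages. First, reduce to the case where the generic fiber has \emph{smooth} domain: given an arbitrary $m$-stable map over $\eta$, use properness of the Kontsevich space $\sMbar_{1,n}(\PP^r,d)$ to find a Kontsevich-stable limit after base change, but more importantly observe that the locus of maps with smooth genus one domain is the generic point of the principal component, and any $m$-stable map over $\eta$ that does \emph{not} have smooth domain is already a limit of maps with smooth domain within $\sMbar_{1,n}^m$; concretely, one normalizes the family — the extra step flagged in the introduction — to arrange that after a further base change the generic fiber acquires a smooth (or at least nodal) domain curve. This normalization step is more delicate than in the Kontsevich case because one must make sense of normalizing a family whose special fiber carries an elliptic $m$-fold point; I expect to handle this by first performing a sequence of blow-ups to separate the branches through the elliptic point, reducing to a family of nodal curves to which ordinary semi-stable reduction applies. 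Second, once we have a family $\sC \ra \Delta$ with smooth generic fiber, apply classical semi-stable reduction to the domain (after base change) to obtain a family of nodal curves with a stable-map structure, then run Smyth's prescribed sequence of blow-ups along sections and contractions of the genus one tail of the central fiber, contracting the genus one subcurve $Z$ via the line bundle $\omega_{\sC/\Delta}(D + \Sigma)$ of Lemma \ref{lemma: contraction} to produce an elliptic $\ell$-fold point. Third — and this is the crucial deviation from \cite{S1} — we must stop the contraction process the moment the genus one subcurve of the central fiber either reaches level $> m$ or acquires a component on which $\mu$ is non-constant, whichever comes first; in the latter situation we are forced to terminate early, but then condition (2) of Definition \ref{def: m-stable map} is vacuous (the map is non-constant on $Z$), so the resulting central fiber is still $m$-stable.

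The main obstacle I anticipate is the interaction between the contraction procedure and the map $\mu$. In Smyth's setting one keeps contracting until the genus one core has more than $m$ distinguished points; here the core may become non-constant under $\mu$ before that happens, and one must verify that the line bundle $\sL = \omega_{\sC/\Delta}(D+\Sigma)$ used in Lemma \ref{lemma: contraction} still has the right positivity (positive on $\mu$-nonconstant components, zero exactly on the tail to be contracted) at the stage where we stop. The delicate point is showing that the process is well-defined — that the map $\mu$ descends through each contraction (it does, since $\mu$ is constant on every contracted component by construction, so $\mu$ factors through the contracted family) and that after finitely many steps the central fiber simultaneously satisfies the singularity condition (1), the level condition (2), and the automorphism condition (3) of Definition \ref{def: m-stable map}. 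Checking (3) amounts to re-running the proof of Lemma \ref{lemma: map automorphisms} on the final central fiber, noting that every smooth rational component has acquired enough distinguished points during the blow-up stage; this is where Lemma \ref{lemma: any E} and the bookkeeping of distinguished points from \cite{S1} do the work. I would also need to confirm boundedness of $\mu^*\sO_{\PP^r}(1)$ is preserved (it is, since the contractions are birational and $\mu$-degree is unchanged on the part of the curve where $\mu$ is non-constant), ensuring the limit is bounded by the characteristic.
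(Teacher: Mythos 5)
Your uniqueness argument and your Step 2--3 machinery (nodal reduction, then alternating blow-ups and contractions, terminating at $\min(t,e)$ where $t$ is the step at which $\mu$ becomes non-constant on the minimal elliptic subcurve and $e$ the step at which the level exceeds $m$, with condition (2) of Definition \ref{def: m-stable map} becoming vacuous in the former case) match the paper's proof. The gap is in your first stage. You assert that ``any $m$-stable map over $\eta$ that does not have smooth domain is already a limit of maps with smooth domain within $\sMbar_{1,n}^m$,'' i.e.\ that $\sM_{1,n}(\PP^r,d)$ is dense. This is false, and it is exactly the pitfall flagged in the remark following Theorem \ref{theorem: properness}: $\sMbar_{1,n}^m(\PP^r,d)$ is reducible, and a generic point of an extra component $S_{d_1,\ldots,d_k}$ --- a contracted smooth elliptic core with $k$ rational tails whose image tangent directions at the attaching point are linearly independent --- is \emph{not} smoothable (Lemma \ref{lemma: main component description}). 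For a family over $\eta$ consisting of such maps, your reduction produces nothing, and your fallback mechanism (normalizing/blowing up the family to separate branches of an elliptic point in the generic fiber) does not address this, since these problematic generic fibers are already nodal; the obstruction is the non-smoothability of the map, not the singularity of the domain.

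The missing idea is the paper's Reduction (a): first decompose the generic fiber along its disconnecting nodes into the minimal elliptic subcurve $Z_\eta$ (with the attaching points promoted to marked points) and the genus-zero tails; complete each tail by properness of the Kontsevich space and re-glue at the end. This reduces the problem to a family whose generic fiber has no disconnecting nodes, hence (by Lemma \ref{lemma: minimal}) has $\omega_C \cong \sO_C$. \emph{Only for such maps} does the density argument go through: by Serre duality $H^1(C,\mu^*\sO(1))^\vee \cong H^0(C, \mu^*\sO(-1))$, which vanishes because $\mu^*\sO(-1)$ has non-positive degree everywhere and negative degree somewhere, so every such map is smoothable by Lemma \ref{lemma: smoothable} and one may then assume the generic fiber is smooth. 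Without this decomposition step your existence proof does not cover the extra components, so the argument as written is incomplete.
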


\begin{remark}
Let $\sM_{1,n}^m(\PP^r, d) \subset \sMbar_{1,n}^m(\PP^r, d)$ be the open substack of maps of degree $d$ from smooth $n$-pointed genus one curves to $\PP^r$ (from now on we will omit the superscript $m$ since these substacks are obviously all isomorphic).  Since $\sMbar_{1,n}^m(\PP^r, d)$ may be reducible, this substack is not necessarily dense.  In particular, it does not \emph{a priori} suffice to prove the valuative criterion for families with smooth generic fiber, but in Section \ref{subsubsection: smooth generic fiber} we will be able to reduce to this case.  This reduction is necessary in order to perform nodal reduction and apply Lemma \ref{lemma: contraction} in the existence portion of the proof of the valuative criterion.  \\
\end{remark}

\subsubsection{Reduction to the case of families with smooth generic fiber}
\label{subsubsection: smooth generic fiber}
\mbox{}  \\  \\
\noindent \textbf{Reduction (a):} \textit{Reduce to the case where the generic fiber of $\sC$ has no disconnecting nodes.}  \\

\noindent We will need the following obvious observations:
\begin{enumerate}
	\item Let $(C, p_1, \ldots, p_n, \mu)$ be an $m$-stable map.  If $Z \subset C$ is a connected subcurve of arithmetic genus one, then the map
	\[
	( Z, \{ Z \cap \ol{C \backslash Z} \} \cup \{ p_i: p_i \in Z \}, \mu|_Z )
	\]
	is $m$-stable.
	\item Let $(C, p_1, \ldots, p_n, \mu)$ be an $m$-stable map.  If $S \subset C$ is a smooth rational component meeting the rest of $C$ at a disconnecting node, then the map
	\[
	( S, \{ S \cap \ol{C \backslash S} \} \cup \{ p_i: p_i \in S \}, \mu|_S )
	\]
	is Kontsevich stable.
	\item Let $(S, \{ s, p_1, \ldots, p_j \}, \mu_1)$ be a Kontsevich stable map from a genus zero curve, let $(Z, \{ z, p_{j+1}, \ldots, p_n \}, \mu_2)$ be an $m$-stable map from a genus one curve, and suppose that $\mu_1(s) = \mu_2(z)$.  Then the identification of $s$ and $z$ along a disconnecting node $(S \cup Z, p_1, \ldots, p_n, \mu)$ is an $m$-stable map.
\end{enumerate}
\noindent By restricting $\Delta$ to a suitable neighborhood of 0, we may assume without loss of generality that all of the fibers $C_t$, $t \neq 0$, have the same number of irreducible components.  Let $Z_t$ denote the minimal elliptic subcurve of $C_t$, and let $(S_1)_t, \ldots, (S_N)_t$ be the remaining rational irreducible components.  Possibly after finite base-change, we may assume that the monodromy around the central fiber acts trivially on the set of irreducible components of the generic fiber.  We can then write $(\sC, \sigma_1, \ldots, \sigma_n, \mu)|_\eta$ as the union of the $m$-stable map (by observation (1))
\[
\sZ := ( Z_t, \{ Z_t \cap \ol{C_t \backslash Z_t} \} \cup \{ \sigma_i: \sigma_i(t) \in Z_t \}, \mu|_{Z_t} )_{t \neq 0}
\]
\noindent and Kontsevich stable maps (by observation (2))
\[
\sS_j := ( (S_j)_t, \{ (S_j)_t \cap \ol{C_t \backslash (S_j)_t} \} \cup \{ \sigma_i: \sigma_i(t) \in (S_j)_t \}, \mu|_{(S_j)_t} )_{t \neq 0}
\]
\noindent over $\eta$.  Suppose that we can complete $\sZ$ by a unique $m$-stable map
\[
(Z_0, \{ z_i \} \cup \{ \bar{\sigma}_i: \bar{\sigma}_i(0) \in Z_0 \}, \mu_0),
\]
\noindent where $\bar{\sigma}_i$ denotes the Zariski closure of $\sigma_i$ in the completed family.  By properness of the Kontsevich space, we can complete each of the $\sS_j$ by a unique Kontsevich stable map
\[
((S_j)_0, \{ s_{j,i} \} \cup \{ \bar{\sigma}_i: \bar{\sigma_i}(0) \in (\sS_j)_0 \}, \mu_j).
\]
\noindent Applying observation (3) successively $N$ times yields that the map of the union
\[
(Z_0 \cup (S_1)_0 \cup \ldots \cup (S_N)_0, \bar{\sigma}_1, \ldots , \bar{\sigma}_n, \mu)
\]
\noindent is $m$-stable, and thus gives a unique $m$-stable completion of the original family $(\sC, \sigma_1, \ldots, \sigma_n, \mu)|_\eta$.  Hence, it suffices to prove existence and uniqueness of an $m$-stable limit for $\sZ$, an $m$-stable map from a family over $\eta$ of connected arithmetic genus one curves with no disconnecting nodes.  \\

\noindent \textbf{Reduction (b):} \textit{Reduce to the case where the generic fiber of $\sC$ is smooth.}  \\

\noindent Let
\[
S \subset \sMbar_{1,n}^m(\PP^r, d)
\]
\noindent denote the open substack of $m$-stable maps $\mu: C \ra \PP^r$ such that $\omega_C \cong \sO_C$, and let $\bar{S}$ denote the closure of $S$ in $\sMbar_{1,n}^m(\PP^r, d)$.  By Reduction (a) and Lemma \ref{lemma: minimal} it suffices to show existence and uniqueness of an $m$-stable limit of a family of maps with generic fiber in $S$.  By the valuative criterion, this is equivalent to showing that the stack $\bar{S}$ is proper.

We now claim that any map in $S$ is smoothable.  Indeed, let $\mu: C \ra \PP^r$ be a map in $S$, and let $L = \mu^* \sO(1)$.  By Lemma \ref{lemma: smoothable}, it suffices to show that $H^1(C, L) = 0$, or equivalently, by Serre duality and triviality of the dualizing sheaf, $H^0(C, L^\vee) = 0$.  This is clear: by definition $L^\vee$ has non-positive degree on any irreducible component of $C$, and strictly negative degree on at least one irreducible component.  Any section of $L^\vee$ must be identically zero on that irreducible component and constant on every other irreducible component, hence identically zero since $C$ is connected.  So any map in $S$ is smoothable, as claimed.  This gives that $\sM_{1,n}(\PP^r, d)$ is dense in $S$, hence in $\bar{S}$.  So by the valuative criterion, it suffices to show existence and uniqueness of an $m$-stable limit of a family of maps with generic fiber in $\sM_{1,n}(\PP^r, d)$.  \\

\subsubsection{Existence of Limits}
\mbox{}  \\  \\
\noindent We now follow the general method of proof of the valuative criterion in \cite{S1}.  The main differences are the following:  we need to apply nodal reduction in Step 1, rather than semi-stable reduction, in order to obtain a family of maps, and in Step 2, we can only contract the minimal elliptic subcurve of the central fiber if the map is constant on it, so that the $m$-stable reduction process may terminate earlier than in \cite{S1}.  \\

\noindent \textbf{Step 1:} \textit{Apply nodal reduction to $(\sC, \mu)|_\eta$.}  \\

\noindent By the nodal reduction theorem \cite{HM}, there exists a finite base-change $\Delta' \ra \Delta$ and a family of maps
\[
(\sC^n \ra \Delta', \sigma_1', \ldots, \sigma_n', \mu': \sC' \ra \PP^r)
\]
\noindent such that
\begin{enumerate}
	\item $(\sC^n, \sigma_1', \ldots, \sigma_n')|_{\eta'} = (\sC, \sigma_1, \ldots, \sigma_n)|_\eta \times_\eta \eta'.$
	\item $(\sC^n|_{\eta'} \ra \sC|_\eta \ras{\mu} \PP^r)|_{\eta'} = \mu'|_{\eta'}.$
	\item $\sC^n$ has nodal central fiber (and smooth generic fiber by (1) and Section \ref{subsubsection: smooth generic fiber}).
	\item $\sC^n$ is smooth.
\end{enumerate}
\noindent Furthermore, we form the ``minimal'' such reduction by contracting all unmarked $\PP^1$'s on which $\mu$ is constant, so that:
\begin{enumerate}
\setcounter{enumi}{4}
	\item Every rational irreducible component of $C^n$ on which $\mu'$ is constant has at least two distinguished points.
\end{enumerate}
\noindent (Note that smoothness of $\sC^n$ is preserved since we have sequentially contracted (-1)-curves.)  For notational simplicity, we will continue to denote the base by $\Delta$ and the given sections by $\sigma_1, \ldots, \sigma_n$.  \\

\noindent \textbf{Step 2:} \textit{Alternate between blowing up the marked points on the minimal elliptic subcurve and contracting the minimal elliptic subcurve.}  \\

\noindent Let $(\sC_0, \mu_0) := (\sC^n, \mu')$.  If $\mu_0$ is non-constant on $Z_0$, the minimal elliptic subcurve of $C_0$, then we only need to stabilize rational components of $C_0$ and proceed immediately to Step 3.  Now suppose that $\mu_0$ is constant on $Z_0$.  Starting from $(\sC_0, \mu_0)$, we will construct a sequence $(\sC_0, \mu_0), (\sC_1, \mu_1), \ldots, (\sC_t, \mu_t)$ of families of maps to $(\PP^r,d)$ over $\Delta$ satisfying the following conditions:
\begin{enumerate}
\renewcommand{\labelenumi}{(\roman{enumi})}
	\item The special fiber $C_i \subset \sC_i$ is a Gorenstein curve of arithmetic genus one.
	\item The total space $\sC_i$ is regular at every node of $C_i$.
	\item The proper transforms of $\sigma_1, \ldots, \sigma_n$ on $\sC_i$ are contained in the smooth locus of $\pi_i$, so we may consider the central fiber as a map from an $n$-pointed curve $(C_i, p_1, \ldots, p_n)$.
	\item Every irreducible component of $C_i$ on which $\mu_i$ is constant has at least two distinguished points for $i > 0$, and every rational irreducible component of $C_0$ on which $\mu_0$ is constant has at least two distinguished points.
	\item $C_i$ has an elliptic $l_{i-1}$ point $p$, where $l_i$ denotes the level of the central fiber $C_i$ (Definition \ref{def: level}).
	\item $l_i \geq l_{i-1}$ for $i < t$.  Furthermore, $l_i = l_{i-1}$ if and only if each irreducible component of $Z_i$ has exactly two distinguished points, where $Z_i$ is the minimal elliptic subcurve of $C_i$.
	\item The map $\mu_i$ is constant on $Z_i$ for $i < t$ and non-constant for $i = t$.  \\
\end{enumerate}

Note that these conditions are the same as those in \cite{S1} except for (iv), due to the modified $m$-stability condition, and (vii), since we cannot construct $\mu_{i+1}$ unless $\mu_i$ is constant on $Z_i$.  It is useful to outline here how each of these conditions is used in the proof that follows:
\begin{itemize}
	\item Condition (i) implies that $C_i$ has a minimal elliptic subcurve $Z_i$.
	\item Conditions (ii) and (iii) imply that $Z_i$ and $\sigma_1, \ldots, \sigma_n$ are Cartier divisors on $\sC_i$.
	\item Condition (iv) is necessary for the existence of the contractions $q_i$ defined below, and conditions (iv) and (v) are the properties needed to check that the $(C_i, \mu_i)$ eventually become $m$-stable.
	\item Condition (vi) is not strictly necessary for the rest of the proof, but the essential content of its derivation is that $t$, as defined in condition (vii), is finite.
\end{itemize}
We construct the $\sC_i$ via the following diagram of birational morphisms over $\Delta$
\begin{diagram}
							 & 						 & \sB_0 		&			 			  &			  &						  & \sB_1 &	\hspace{0.5in} \cdots\cdots\cdots \hspace{0.5in} & \sB_{t-2} & & & & \sB_{t-1} & &  \\							 
							 & \ldTo^{p_1} &			 		& \rdTo^{q_1} &			  & \ldTo^{p_2} &				& \hspace{0.5in} \cdots\cdots\cdots \hspace{0.5in} &					 & \rdTo^{q_{t-2}} & & \ldTo^{p_{t-1}} & & \rdTo^{q_{t-1}} &  \\
\sC_0 				 &						 & \rDashto	& 						& \sC_1 &	\rDashto		& 			& \hspace{0.5in} \cdots\cdots\cdots \hspace{0.5in} &					 & \rDashto & \sC_{t-1} & & \rDashto & & \sC_t
\end{diagram}
\noindent which commute with the morphisms to $\PP^r$.  The definitions of the $\sB_i, p_i, q_i$ are exactly the same as those in \cite{S1}:  given $\sC_i$ satisfying (i)-(vii) with $i < t$, we define $p_i: \sB_i \ra \sC_i$ to be the blow-up of $\sC_i$ at the points $\{ p_j : p_j \in Z_i \}$, and define $q_i: \sB_i \ra \sC_{i+1}$ to be the contraction of $\tilde{Z}_i$, the proper transform of $Z_i$ in $\sB_i$.  The map $\mu_i$ is constant on $Z_i$, hence on $\tilde{Z}_i$, so it induces a map $\mu_{i+1}: \sC_{i+1} \ra \PP^r$ of degree $d$.

We need to check that the contraction $q_i$ exists.  Consider the line bundle on $\sB_i$ defined by
\[
\sL := \omega_{\sB_i / \Delta}(\tilde{Z}_i + \sigma_1 + \ldots + \sigma_n) \otimes \mu^*\sO_{\PP^r}(2).
\]
\noindent This is indeed a line bundle by the above observations.  Adjunction and Lemma \ref{lemma: minimal} give
\[
\sL|_{\tilde{Z}_i} \cong \omega_{\tilde{Z}_i} \cong \sO_{\tilde{Z}_i}.
\]
\noindent An easy verification gives that $\sL$ has non-negative degree on every irreducible component of $B_i$ by (iv), and that the subcurve $E \subset B_i$ on which $\sL$ has degree zero is
\[
E = \tilde{Z}_i \cup F,
\]
\noindent where $F$ is the union of irreducible components of $B_i$ on which $\mu_i$ is constant which are disjoint from $\tilde{Z}_i$ and have exactly two distinguished points.  Therefore, Lemma \ref{lemma: contraction} applied to $\sL$ gives that $\tilde{Z}_i \cup F$ is a contractible subcurve of the special fiber.  Since $\tilde{Z}_i$ is disjoint from $F$, we may of course contract only $\tilde{Z}_i$; this is the desired $q_i: \sB_i \ra \sC_{i+1}$.

We now need to check that $\sC_{i+1}$ satisfes (i)-(vi) if $\sC_i$ does, and that after finitely many steps we achieve (vii).  Except for condition (iv), the conditions (i)-(vi) do not involve the maps to $\PP^r$, so their verification is the same as that in \cite{S1}.  For condition (iv), each exceptional divisor of $p_i$ has two distinguished points, so each irreducible component of $B_i$ on which $\mu_i$ is constant must have at least two distinguished points:  this follows from assumption (5) in Step 1 for $i=0$, and from the assumption of condition (iv) on $\sC_i$ for $i>0$.  Thus, by definition each irreducible component of $C_{i+1}$ on which $\mu_{i+1}$ is constant must have at least two distinguished points.  Finally, to check that we achieve (vii) after finitely many steps, write out the fundamental decomposition of $C_i$:
\[
C_i = Z_i \cup R_1 \cup \ldots \cup R_k.
\]
\noindent By construction, one irreducible component from each subcurve $R_j$ (namely, the irreducible component which meets $Z_i$) is absorbed into the minimal elliptic subcurve $Z_{i+1} \subset C_{i+1}$.  The map $\mu_0$ must be non-constant on some irreducible component of $\ol{C_0 \backslash Z_0}$ since $d > 0$.  Thus, after finitely many steps, $\mu_t$ will be non-constant on some component of $Z_t$.  \\

\noindent \textbf{Step 3:} \textit{Stabilize to obtain the $m$-stable limit.}  \\

\noindent For each $0 \leq i \leq t$, let $\phi_i: \sC_i \ra \sC$ be the stabilization map which contracts all of the irreducible components $F \subset C_i$ such that $F$ has exactly two distinguished points and $\mu_i$ is constant on $F$.  The contraction $\phi_i$ exists since each such $F$ is a smooth rational curve meeting the rest of the special fiber in one or two nodes and the total space $\sC_i$ is regular around $F$.  The images of the sections $\sigma_1, \ldots, \sigma_n$ on $\sC_i$ lie in the smooth locus of $\sC$, and the map $\mu_i$ descends to map $\mu: \sC \ra \PP^r$ of degree $d$ since $\mu_i$ is constant on each component contracted by $\phi_i$.  Thus, the resulting special fiber $(C, p_1, \ldots, p_n, \mu)$ is a map to $(\PP^r,d)$ from an $n$-pointed curve.

Two situations can arise as we increase $i$: the map $\mu_i$ can become non-constant on the minimal elliptic subcurve $Z_i \subset C_i$ (denoted above as $i=t$), in which case the process cannot continue since $\mu_{i+1}$ cannot be defined, or $Z_i$ can have at least $m+1$ distinguished points (denoted $i=e$), in which case $C_{e+1}$ would have an elliptic $l$-fold point with $l>m$.  We now show that the above map $(C, p_1, \ldots, p_n, \mu)$ is $m$-stable for $i = \min(t,e)$, completing the proof of existence.  We split into two cases, $t-1 < e$ and $t-1 \geq e$:  \\

\noindent \textbf{Case 1:} $l_{t-1} \leq m.$  \\

\noindent In this case we take $i = t$.  To show that $(C, p_1, \ldots, p_n, \mu)$ is an $m$-stable map, we must verify conditions (1)-(3) of Definition \ref{def: m-stable map}.
\begin{enumerate}
	\item \emph{$C$ has only nodes and elliptic $l$-fold points, $l \leq m$, as singularities.}  By conditions (i) and (v) above, $C_t$ has only nodes and an elliptic $l_{t-1}$-fold point as singularities, where $l_{t-1} \leq m$ by assumption.  The contraction $\phi_t$ can only introduce additional nodes on $C$.  \\
	\item \emph{Any connected subcurve of $C$ of arithmetic genus one on which $\mu$ is constant has at least $m+1$ distinguished points.}  Since $\mu$ is non-constant on the minimal elliptic subcurve $Z \subset C$, no such subcurve exists, so this condition is automatic.  \\
	\item \emph{$\mu$ has no infinitesimal automorphisms.}  We of course equivalently check the conditions of Lemma \ref{lemma: map automorphisms}.  Let
	\[
	C_t = Z_t \cup R_1 \ldots \cup R_k
	\]
	be the fundamental decomposition of $C_t$.  Since $\phi_t$ contracts every component of $R_1 \cup \ldots \cup R_k$ with two distinguished points, every component of $\phi_t(R_1) \cup \ldots \cup \phi_t(R_k)$ has at least three distinguished points.  It remains to check the condition on irreducible components of $\phi_t(Z_t)$.
	
If $t=0$, then $Z_t$ is a smooth genus one curve and there is nothing to check.  If $t \geq 1$, $Z_t$ consists of $l_{t-1}$ smooth rational branches meeting in an elliptic $l_{t-1}$-fold point, so condition (iv) states that every component of $Z_t$ on which $\mu$ is constant has at least two distinguished points.  But by definition $\phi_t$ maps $Z_t$ isomorphically onto $\phi(Z_t)$, so the same holds for the minimal elliptic subcurve $\phi(Z_t) \subset C$ as well.  \\
\end{enumerate}

\noindent \textbf{Case 2:} $l_{t-1} > m.$  \\

\noindent In this case we define
\[
e := \min \{ 0 \leq j \leq t-1 : l_j > m \}
\]
\noindent and take $i = e$.  We again verify conditions (1)-(3) of Definition \ref{def: m-stable map}.
\begin{enumerate}
	\item \emph{$C$ has only nodes and elliptic $l$-fold points, $l \leq m$, as singularities.}  The proof is the same as the one in Case 1 by our choice of $e$.  \\
	\item \emph{Any connected subcurve of $C$ of arithmetic genus one on which $\mu$ is constant has at least $m+1$ distinguished points.}  It suffices to check this on the minimal elliptic subcurve $Z \subset C$.  The level of $C_e$ is greater than $m$ by our choice of $e$, and we claim that the level of $C$ is equal to the level of $C_e$.  The verification of this is essentially the same as the one in \cite{S1}.  Let
	\[
	C_e = Z_e \cup R_1 \cup \ldots \cup R_k
	\]
	be the fundamental decomposition of $C_e$.  Order the $R_i$ so that $R_1, \ldots, R_j$ consist entirely of irreducible components with two distinguished points and $\mu_e$ is constant on each $R_i$.  We now simply show that under the isomorphism $Z_e \ra \phi_e(Z_e)$, each of the intersections of $Z_e$ with $R_1, \ldots, R_j$ is replaced by a marked point, so that the number of distinguished points remains the same.  More formally, $\phi_e$ contracts each of $R_1, \ldots, R_j$ to a point (and partially contracts the other $R_i$ which contain irreducible components on which $\mu_e$ is constant), so the fundamental decomposition of $C$ is
	\[
	C = \phi_e(Z_e) \cup \phi_e(R_{j+1}) \cup \ldots \cup \phi_e(R_k),
	\]
	and thus
	\[
	\abs{ \{ \phi_e(Z_e) \cap \ol{C \backslash \phi_e(Z_e)} \} } = \abs { \{ Z_e \cap \ol{C_e \backslash Z_e} \} } - j. 
	\]
	But each $R_1, \ldots, R_j$ is a chain of $\PP^1$'s whose final component contains a marked point, so $\phi_e(R_1), \ldots, \phi_e(R_j)$ are marked points on the minimal elliptic subcurve $\phi_e(Z_e)$, and thus
	\[
	\abs{ \{ p_i: p_i \in \phi_e(Z_e) \} } = \abs{ \{ p_i: p_i \in Z_e \} } + j.
	\]
	Summing the last two equations proves the claim.  \\
	\item \emph{$\mu$ has no infinitesimal automorphisms.}  The same argument as in Case 1 gives the condition on irreducible components of $\phi_e(Z_e)$, and gives that every irreducible component of $\phi(Z_e)$ has at least two distinguished points.  Conditions (1) and (2) imply that some component of $\phi(Z_e)$ must have at least three distinguished points.  \\
\end{enumerate}

\subsubsection{Uniqueness of Limits}
\label{subsubsection: uniqueness of limits}
\mbox{}  \\ \\
\noindent Let $(\sC \ra \Delta, \sigma_1, \ldots, \sigma_n, \mu_\sC)$ and $(\sD \ra \Delta, \tau_1, \ldots, \tau_n, \mu_\sD)$ be $m$-stable maps of degree $d$ to $\PP^r$ over $\Delta$, and suppose we have an isomorphism over the generic fiber
\[
\phi: (\sC, \sigma_1, \ldots, \sigma_n, \mu_\sC)|_\eta \cong (\sD, \tau_1, \ldots, \tau_n, \mu_\sD)|_\eta.
\]
\noindent By Lemma \ref{lemma: reduction}, there exists an \etale \ neighborhood $0 \in \Delta' \subset \Delta$ and disjoint unmarked smooth sections $\{ \sigma_{j,k} \}$ and $\{ \tau_{j,k} \}$ of $\sC|_{\Delta'}$ and $\sD|_{\Delta'}$, $1 \leq j \leq m+1$, $1 \leq k \leq d$, such that
\[
(\sC|_{\Delta'}, \{ \sigma_i, \sigma_{j,k} \})
\]
\noindent and
\[
(\sD|_{\Delta'}, \{ \tau_i, \tau_{j,k} \})
\]
\noindent are $m$-stable $(n + d(m+1))$-pointed curves over $\Delta'$ and $\phi$ sends $\{ \sigma_{j,k} \}$ to $\{ \tau_{j,k} \}$.  Since the moduli space of $m$-stable curves is separated, $\phi$ induces an isomorphism of $m$-stable curves
\[
(\sC, \{\sigma_i, \sigma_{j,k} \}) \ra (\sC', \{ \tau_i, \tau_{j,k} \})
\]
\noindent over $\Delta'$, hence over $\Delta$.  In particular, this gives an isomorphism of the original $n$-pointed curves
\[
(\sC, \{\sigma_i\}) \ra (\sD, \{ \tau_i\})
\]
\noindent over $\Delta$.  We have $\phi \circ \mu_\sD = \mu_\sC$ on $\Delta$ since $(\phi \circ \mu_\sD)|_\eta = \mu_\sC|_\eta$, so $\phi$ is an isomorphism of maps over $\Delta$.  \\

\newpage

\part{Geometric properties}

\noindent From this point on, we only consider the case $(X,\beta) = (\PP^r,d)$.

\section{Stabilization}
\label{section: stabilization}
We observe here that $\sMbar_{1,n}^m(\PP^r,d)$ stabilizes for $m \geq d+n$, and explicitly describe its elements.  We will apply this result in Section \ref{section: smoothness} to prove smoothness of $\sMbar_{1,n}^m(\PP^r,d)$ for $d+n \leq m \leq 5$.

\begin{lemma}
The spaces $\sMbar_{1,n}^m(\PP^r,d)$ are equal for all $m \geq d+n$.  More precisely, the elements of $\sMbar_{1,n}^m(\PP^r,d)$ for $m \geq d+n$ are maps $\mu: C \ra \PP^r$ such that
\begin{enumerate}
\renewcommand{\labelenumi}{(\arabic{enumi}')}
	\item $C$ has nodes and elliptic $l$-fold points, $l \leq d+n$, as singularities.
	\item $\mu$ is non-constant on the minimal elliptic subcurve $Z \subset C$.
	\item $\mu$ has no infinitesimal automorphisms.
\end{enumerate}
\label{lemma: stabilization}
\end{lemma}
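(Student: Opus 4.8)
The plan is to show that, once $m \ge d+n$, the three conditions defining an $m$-stable map to $(\PP^r,d)$ in Definition \ref{def: m-stable map} become independent of $m$ and coincide exactly with $(1')$--$(3')$; equality of the moduli problems is then immediate, since $m$-stability is imposed fibrewise on a flat family. Fix a fibre and write $C = Z \cup R_1 \cup \cdots \cup R_k$ for its fundamental decomposition (Lemma \ref{lemma: fundamental}), so each tail $R_j$ is a genus zero nodal curve meeting $Z$ in a single node; set $d_j := \deg \mu|_{R_j}$, $n_j := \#\{p_i \in R_j\}$, and $a := \#\{p_i \in Z\}$. Note that $d = \sum_j d_j + \deg\mu|_Z$ and $n = a + \sum_j n_j$.

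The combinatorial engine of the proof is the elementary observation that \emph{every tail satisfies $d_j + n_j \ge 1$}: if $\mu$ were constant on $R_j$ with no marked point on $R_j$, then an extremal component of the tree $R_j$ (one meeting the rest of $C$ in a single point) would be a smooth rational component on which $\mu$ is constant carrying a single distinguished point, contradicting condition (3) of Definition \ref{def: m-stable map} via Lemma \ref{lemma: map automorphisms}. First I would apply this when $\mu$ is constant on $Z$: then $\deg\mu|_Z = 0$, so the level of $Z$ is
\[
a + k \;\le\; a + \sum_{j=1}^{k}(d_j + n_j) \;=\; d+n .
\]
Since $m \ge d+n$, condition (2) of Definition \ref{def: m-stable map} --- which demands level $> m$ whenever $\mu$ is constant on $Z$ --- can never be satisfied; hence $\mu$ is non-constant on $Z$, which is $(2')$. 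Moreover condition (2) is then vacuous, because any connected arithmetic genus one subcurve of $C$ contains the minimal elliptic subcurve $Z$ (Lemma \ref{lemma: fundamental}), on which $\mu$ is now non-constant.

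Next I would bound the singularities of $C$. Suppose $C$ has an elliptic $l$-fold point $p$; by Lemma \ref{lemma: minimal}, $Z = B_1 \cup \cdots \cup B_l$ with the $B_i$ smooth rational and pairwise meeting only at $p$, and every node $Z \cap R_j$ is a smooth point of $Z$ lying on a unique branch $B_i$. Partition the branches into those on which $\mu$ is non-constant --- at most $d$ of them, as their degrees are positive and sum to $d - \sum_j d_j$ --- and those on which $\mu$ is constant; by condition (3) each constant branch has a distinguished point besides $p$, which is either a marked point on that branch or a node supporting one of the tails, and (because each tail meets $Z$ in a single node on a single branch) these marked points are pairwise distinct and these tails are pairwise distinct. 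Combining $d = \sum_j d_j + \deg\mu|_Z$, $n = a + \sum_j n_j$, and $d_j + n_j \ge 1$ for all $j$, a short count gives $l \le d+n$. Thus for $m \ge d+n$, condition (1) of Definition \ref{def: m-stable map} is automatically equivalent to $(1')$.

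Assembling these, for $m \ge d+n$ an $m$-stable map to $(\PP^r,d)$ is precisely a map satisfying $(1')$--$(3')$, with no residual dependence on $m$; conversely any map satisfying $(1')$--$(3')$ is $m$-stable for every $m \ge d+n$, since $(1')$ implies condition (1), $(2')$ makes condition (2) vacuous, and $(3')$ is condition (3). As all of these are fibrewise conditions, the stacks $\sMbar_{1,n}^m(\PP^r,d)$ with $m \ge d+n$ are literally equal. I expect the main (though still routine) obstacle to be the bookkeeping in the two displayed estimates --- in particular verifying that no unit of degree and no marked point is charged against two different branches or tails --- which is exactly where one uses that each $R_j$ meets $Z$ in a single node (Lemma \ref{lemma: fundamental}) lying on a single branch of $Z$.
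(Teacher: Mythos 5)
Your proof is correct and follows essentially the same strategy as the paper's: for $m \ge d+n$ the stability conditions force $(1')$ and $(2')$ by double-counting degrees and marked points, and the converse is immediate. The only difference is bookkeeping --- the paper establishes $(1')$ by normalizing $C$ at the elliptic $l$-fold point and noting that each connected component of the normalization carries positive degree or a marked point, and establishes $(2')$ by bounding the distinguished points of the \emph{maximal} connected genus one subcurve on which $\mu$ is constant rather than of $Z$ itself --- but these are the same counts arranged differently.
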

\begin{proof}
Let $m \geq d+n$.  By definition, any map satisfying (1')-(3') is $m$-stable, so we need to check that $m$-stability implies (1') and (2').  Let $\mu: C \ra X$ be an $m$-stable map.  Suppose $p \in C$ is an elliptic $l$-fold point.  Let $C_1, \ldots, C_l$ be the connected components of the normalization of $C$ at $p$, and let $B_i \subset C_i$ be the irreducible component containing $p$.  Since $\mu$ has degree $d$, it must be constant on at least $l-d$ of the $B_i$.  By the stability condition, each of the corresponding $C_i$ contains at least one marked point.  Thus, $n \geq l-d$, so $l \leq d+n$.  This gives (1').

To show (2'), suppose that $\mu$ is constant on the minimal elliptic subcurve $Z \subset C$.  Define $E \subset C$ to be the maximal connected genus one subcurve on which $\mu$ is constant; by assumption, $E$ is non-empty.  We then have
\[
\abs{ \{ E \cap \ol{C \backslash E} \} } \leq d
\]
and
\[
\abs{ \{ p_i: p_i \in E \} } \leq n,
\]
so
\[
\abs{ \{E \cap \ol{C \backslash E} \} \cup \{p_i: p_i \in E\} } \leq d+n.
\]
However, $m$-stability asserts that
\[
\abs{ \{E \cap \ol{C \backslash E} \} \cup \{p_i: p_i \in E\} } > m \geq d+n,
\]
a contradiction.  So $\mu$ is non-constant on $Z$, proving (2').
\end{proof}

\section{Irreducible components}
\label{section: irreducible components}

\subsection{The principal component}
\label{subsection: smoothability}
\noindent For any integer $m \geq 0$, consider the open substack
\[
\sM_{1,n}(\PP^r,d) \subset \sMbar_{1,n}^m(\PP^r,d)
\]
\noindent of maps whose domain is a smooth $n$-pointed genus one curve.  In contrast with the higher genus case, $\sM_{1,n}(\PP^r,d)$ is irreducible \cite{VZ1}, and thus its closure is an irreducible component of $\sMbar_{1,n}^m(\PP^r,d)$.  We will call this the \emph{principal component} of $\sMbar_{1,n}^m(\PP^r,d)$.  The main result of this section is Lemma \ref{lemma: main component description}, which gives an explicit description of the principal component in the case $n=0$.  We will not need an analogous description in the case $n>0$.  Note that Lemma \ref{lemma: main component description} was stated in \cite{VZ1} in the case of Kontsevich stable genus one maps.  

\begin{definition}
Let $C$ be a curve, and let $\mu_0: C \ra X$ be a map to a projective variety $X$.  Then $\mu_0$ is said to be \emph{smoothable} if there exists a one-parameter family of pointed curves $\sC \ra \Delta$ with smooth generic fiber and central fiber $C$, and a map $\mu: \sC \ra X$ such that $\mu|_{\sC_0} = \mu_0$.  We call $(\sC, \mu)$ a \emph{smoothing} of $\mu_0$.
\end{definition}

\noindent The principal component of $\sMbar_{1,0}^m(\PP^r,d)$ consists of exactly the smoothable $m$-stable maps.  We have the following general criterion for smoothability of maps from curves to projective varieties:

\begin{lemma}
Let $C$ be a smoothable curve, $X \subset \PP^r$ a projective variety, and $\mu_0: C \ra X$ a map.  Let $L = \mu_0^*\sO_X(1)$, and suppose that $H^1(C,L) = 0$.  Then $\mu_0$ is smoothable.
\label{lemma: smoothable}
\end{lemma}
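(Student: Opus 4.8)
The plan is to produce an explicit smoothing by first deforming the target of the map in a way that kills $H^1$, then lifting the deformation of the abstract curve to a deformation of the map. Concretely, I would choose a one-parameter smoothing $\pi: \sC \ra \Delta$ of $C$ as an abstract curve (which exists by hypothesis), with central fiber $C$ and smooth generic fiber. The goal is to extend $\mu_0: C \ra \PP^r$ to a map $\mu: \sC \ra \PP^r$; restricting the image to $X$ afterwards is automatic since $X$ is closed and the generic fiber already maps into $X$ (one takes the scheme-theoretic image, or works with the smoothing of $C$ inside $X$ directly when $X$ is itself smoothable — but in fact the statement only asserts smoothability as a curve, so we land in $\PP^r$).

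First I would recall that giving $\mu_0: C \ra \PP^r$ is equivalent to giving the line bundle $L = \mu_0^*\sO(1)$ together with $r+1$ sections $s_0, \ldots, s_r \in H^0(C,L)$ generating $L$ (i.e. with no common zero). To extend $\mu_0$ over $\Delta$, I would extend $L$ to a line bundle $\sL$ on $\sC$ (possible after shrinking $\Delta$, since the relative Picard functor is smooth over $\Delta$ — or one simply extends $L$ using that $\sC$ is regular, or invokes the obstruction in $H^2$ which vanishes for dimension reasons), and then extend each section $s_i$ to a section $\tilde s_i \in H^0(\sC,\sL)$. The key point is that the obstruction to extending sections lives in $H^1(C,L)$, which vanishes by hypothesis; equivalently, the restriction map $H^0(\sC,\sL) \ra H^0(C,L)$ is surjective because $R^1\pi_*\sL$ vanishes near $0 \in \Delta$ by cohomology and base change, using $H^1(C,\sL|_C) = H^1(C,L) = 0$. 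Having extended the sections, I would check that the $\tilde s_i$ have no common zero on $\sC$ after possibly shrinking $\Delta$: their common zero locus is closed and does not meet the central fiber $C$ (since the $s_i$ generate $L$ there), hence is empty over a neighborhood of $0$. This produces the morphism $\mu: \sC \ra \PP^r$ with $\mu|_C = \mu_0$.

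The main obstacle is the extension of the sections $s_i$ — this is precisely where the hypothesis $H^1(C,L) = 0$ is used, and it must be set up carefully: one wants $\pi: \sC \ra \Delta$ chosen so that $\sL$ is flat over $\Delta$ with $\sL|_C = L$, and then semicontinuity plus cohomology-and-base-change give $R^1\pi_*\sL = 0$ near $0$ and that $\pi_*\sL$ is locally free with fiber $H^0(C,L)$ at $0$, yielding the surjectivity of the restriction map on global sections. The extension of the line bundle itself is a comparatively minor point (the Picard scheme of $\sC/\Delta$ is smooth, $\Delta$ being a regular one-dimensional base, or one notes that $\mathrm{Pic}(\sC) \ra \mathrm{Pic}(C)$ is surjective since $\sC$ is regular and $C$ is a principal divisor). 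Finally, I would remark that the marked points play no role here: one carries along the sections $\sigma_i$ of the smoothing of the pointed curve, and the resulting $(\sC, \{\sigma_i\}, \mu)$ is a smoothing of $\mu_0$ in the required sense.
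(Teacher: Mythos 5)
Your proposal is correct and follows essentially the same route as the paper: extend $L$ to a line bundle $\sL$ on a smoothing $\sC \ra \Delta$, use $H^1(C,L)=0$ together with semicontinuity and cohomology-and-base-change to extend the defining sections of $\mu_0$, and shrink $\Delta$ so the extended sections remain base-point free, yielding $\mu: \sC \ra \PP^r$ with $\mu|_{C} = \mu_0$. The only differences are cosmetic (the paper extends $L$ by extending a divisor rather than invoking smoothness of the Picard functor, and phrases the map via a subbundle of $\pi_*\sL$ with a chosen frame), and the question of whether the smoothing lands in $X$ rather than just $\PP^r$ is left implicit in the paper's proof exactly as in yours.
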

\begin{proof}
In fact, we prove that $\mu_0$ extends to \emph{any} smoothing $\sC \ra \Delta$ of $C$ after suitably restricting $\Delta$.  By choosing $\Delta$ suitably, we can extend $L$ to a line bundle $\sL$ on $\sC$ by choosing any divisor $D$ on $C$ such that $\sO(D) = L$ and extending $D$ over a neighborhood of 0.  We may also choose $\Delta$ so that $H^1(C_b, \sL_b)=0$ for all $b \in \Delta$.  The theorem on cohomology and base change then states that the base-change homomorphism $(\pi_*\sL)_b \ra H^0(\sC_b, \sL_b)$ is an isomorphism.  In particular, globally generating sections $\alpha_1, \ldots, \alpha_s \in H^0(C,L)$ extend to globally generating sections of $\sL$, and by restricting $\Delta$ once more, these sections determine an embedding into projective space.  Hence $\sL$ is very ample.

The map $\mu_0$ is determined by a basis $\bar{t}$ of a subspace of sections $V \subset H^0(C,L)$ with no common zeroes.  Restricting $\Delta$ to a neighborhood over which $\sL$ is trivial, we can extend $V$ to a sub-vector bundle $\sV \subset \pi_* \sL$ of sections of $\sL$ with no common zeroes, and can extend $\bar{t}$ to a local frame $\bar{\gt}$ of $\sV$.  The triple $(\sL,\sV,\bar{\gt})$ then determines a map $\mu: \sC \ra \PP^r$ such that $\mu|_Y = \mu_0$.
\end{proof}

\noindent The next three lemmas will yield one direction of Lemma \ref{lemma: main component description}.

\begin{lemma}
Let $R$ be a curve of arithmetic genus zero (i.e.\ a nodal tree of $\PP^1$'s), and let $\mu_0: R \ra X$ be a map to a projective variety $X$.  Let $p \in R$ be any smooth point.  Then
\begin{enumerate}
\renewcommand{\labelenumi}{(\alph{enumi})}
	\item $H^0(R, \omega_R(p)) = 0$.
	\item The map $\mu_0$ is smoothable.
\end{enumerate}
\label{lemma: smoothable rational trees}
\end{lemma}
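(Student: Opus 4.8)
The plan is to prove the two parts separately, deriving (b) from the smoothability criterion of Lemma \ref{lemma: smoothable} together with a cohomology vanishing that I would establish by induction on the number of components.

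For (a), I would argue by Riemann--Roch and Serre duality on the Gorenstein curve $R$. Since $R$ is nodal of arithmetic genus zero, $\omega_R$ is invertible, and $\omega_R(p)$ is a line bundle because $p$ lies in the smooth locus. Then $\chi(R,\omega_R(p)) = \deg\omega_R(p) + 1 - p_a(R) = -1 + 1 = 0$. By Serre duality, $H^1(R,\omega_R(p))^\vee \cong H^0(R,\sO_R(-p))$, and a global section of $\sO_R$ vanishing at $p$ must be identically zero, since $H^0(R,\sO_R) = k$ (as $R$ is connected, reduced, and proper) and a nonzero constant does not vanish anywhere; hence $H^1(R,\omega_R(p)) = 0$ and therefore $H^0(R,\omega_R(p)) = \chi(R,\omega_R(p)) = 0$. (Alternatively one may induct on the number of components, peeling off a leaf $R_0$ not containing $p$, using that $\omega_R$ has degree $-1$ on such a leaf so that $\mu$-sections vanish on $R_0$, and then passing to the smaller tree $\overline{R \backslash R_0}$, which still contains $p$.)

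For (b), I would invoke Lemma \ref{lemma: smoothable}: a nodal tree of $\PP^1$'s of arithmetic genus zero is itself smoothable (it occurs as the central fiber of a one-parameter family with generic fiber $\PP^1$, obtained by smoothing the nodes one at a time), so it suffices to show that $H^1(R,L) = 0$ for $L = \mu_0^*\sO_X(1)$. Since $\sO_X(1)$ is very ample, $L$ is globally generated, hence has non-negative degree on each component $R_i \cong \PP^1$. I would then prove, by induction on the number $N$ of components, that every line bundle of non-negative degree on each component of a genus-zero nodal tree has vanishing $H^1$. The base case $N = 1$ is $H^1(\PP^1,\sO(d)) = 0$ for $d \geq 0$. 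For the inductive step, write $R = R_0 \cup_q R'$ with $R_0$ a leaf meeting the rest in the single node $q$, and use the normalization (Mayer--Vietoris) sequence $0 \to L \to L|_{R_0} \oplus L|_{R'} \to L_q \to 0$: here $H^1(R_0, L|_{R_0}) = 0$ since $\deg L|_{R_0} \geq 0$, $H^1(R', L|_{R'}) = 0$ by induction (the degrees of $L$ on the components of $R'$ are unchanged and still non-negative), and the evaluation map $H^0(R_0, L|_{R_0}) \to L_q$ is surjective because a globally generated line bundle on $\PP^1$ has surjective evaluation at every point; the long exact cohomology sequence then forces $H^1(R,L) = 0$.

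I do not expect a substantial obstacle; the lemma is essentially a matter of bookkeeping. The one point to watch in the inductive step of (b) is that, after removing the leaf $R_0$, the node $q$ becomes a \emph{smooth} point of the component of $R'$ containing it, so that the degree of $L$ on every component of $R'$ is genuinely unchanged and the inductive hypothesis is preserved. It is also worth recording that one uses global generation of $L$, not merely nefness, since the argument needs surjectivity of evaluation at $q$ rather than just non-negativity of degrees.
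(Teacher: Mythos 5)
Your proof is correct, but it takes a somewhat different route from the paper's on both halves. For (a), the paper argues by induction on the number of irreducible components, peeling the tree apart from the component $S$ containing $p$: sections of $\omega_R(p)$ restrict on each attached tree $R_i$ to sections of $\omega_{R_i}(p_i)$, which vanish by induction, and then the degree count $\deg\omega_R(p)|_S=k-1$ with $k$ imposed zeros kills the section on $S$. Your primary argument --- $\chi(R,\omega_R(p))=0$ plus Serre duality identifying $H^1(R,\omega_R(p))^\vee$ with $H^0(R,\sO_R(-p))=0$ --- is shorter and avoids the induction entirely (your parenthetical alternative is essentially the paper's argument run from the leaves instead of from $S$). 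For (b), the paper simply says the claim follows from part (a), Lemma \ref{lemma: smoothable}, and Serre duality, i.e.\ it reduces $H^1(R,L)=0$ to the vanishing of $H^0(R,\omega_R\otimes L^\vee)$, which one then sees by comparing with $\omega_R(p)$ (or by rerunning the induction of part (a)); your Mayer--Vietoris induction proves $H^1(R,L)=0$ directly and makes (b) logically independent of (a), which is arguably more self-contained than the paper's terse reduction. One small quibble with your closing remark: on each component $R_i\cong\PP^1$ non-negative degree already implies global generation, so the surjectivity of evaluation at $q$ really does follow from ``nefness'' alone; the distinction you draw there is not needed, though it does no harm.
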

\begin{proof}
For part (a), we induct on the number of irreducible components of $R$.  If $R$ is irreducible, then $\deg \omega_R(p) = -1$ and $H^0(R, \omega_R(p)) = 0$.  Now suppose $R$ has $c$ components, and assume that the lemma holds for all $R$ with fewer than $c$ components.  Let $S$ be the unique irreducible component of $R$ containing $p$, and write
\[
R = S \cup R_1 \cup \ldots \cup R_k
\]
\noindent with $R_1, \ldots, R_k$ rational trees.  Let $p_i = S \cap R_i$.  Then
\[
\omega_R(p)|_{R_i} \cong \omega_{R_i}(p_i),
\]
\noindent so by the inductive hypothesis $H^0(R_i, \omega_R(p)|_{R_i}) = 0$, i.e.\ any section $s$ of $\omega_R(p)$ must vanish on the subcurves $R_1, \ldots, R_k$.  But we also have
\[
\deg \omega_R(p)|_S = -2 + k + 1 = k-1,
\]
\noindent so since $s|_S$ vanishes at the points $p_1, \ldots, p_k$, $s|_S$ must be identically zero.  Thus $s$ is identically zero.  This completes the proof of part (a).

Part (b) follows immediately from part (a), Lemma \ref{lemma: smoothable}, and Serre duality.
\end{proof}

\begin{lemma}
Let $C$ be a Gorenstein curve of arithmetic genus one, and let $Z$ be the minimal elliptic subcurve of $C$.  Let $\mu_0: C \ra X$ be a map to a projective variety $X$, and assume that $\mu_0$ is non-constant on $Z$.  Then the map $\mu_0$ is smoothable.
\label{main component}
\end{lemma}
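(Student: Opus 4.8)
The plan is to deduce the lemma from the smoothability criterion of Lemma~\ref{lemma: smoothable}. Since $C$ is a Gorenstein curve of arithmetic genus one, it is smoothable by Lemma~\ref{lemma: Gorenstein is smoothable}; so, writing $L := \mu_0^*\sO_X(1)$, it suffices to show $H^1(C,L) = 0$. Observe that $L$ is the pullback of an ample line bundle, hence has degree $\geq 0$ on every irreducible component of $C$, with degree $>0$ on every component on which $\mu_0$ is non-constant.

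To compute $H^1(C,L)$ I would use the fundamental decomposition $C = Z \cup R_1 \cup \ldots \cup R_k$ of Lemma~\ref{lemma: fundamental}, in which each $R_i$ is a nodal tree of $\PP^1$'s meeting $Z$ in a single node $q_i$, and $q_i$ is a smooth point of both $Z$ and $R_i$. Normalizing $C$ at $q_1,\ldots,q_k$ yields the exact sequence
\[
0 \longrightarrow L \longrightarrow L|_Z \oplus \bigoplus_{i=1}^{k} L|_{R_i} \longrightarrow \bigoplus_{i=1}^{k} k_{q_i} \longrightarrow 0,
\]
whose long exact cohomology sequence reduces the vanishing of $H^1(C,L)$ to three claims: (i) $H^1(Z,L|_Z) = 0$; (ii) $H^1(R_i,L|_{R_i}) = 0$ for each $i$; and (iii) the evaluation map $H^0(Z,L|_Z)\oplus\bigoplus_i H^0(R_i,L|_{R_i}) \to \bigoplus_i k_{q_i}$ is surjective.

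For (i): by Lemma~\ref{lemma: minimal} we have $\omega_Z \cong \sO_Z$, so Serre duality gives $H^1(Z,L|_Z) \cong H^0(Z,L^\vee|_Z)^\vee$. The line bundle $L^\vee|_Z$ has non-positive degree on every component of $Z$, and strictly negative degree on at least one (since $\mu_0$ is non-constant on $Z$, it is non-constant on some irreducible component of $Z$). Going through the four possibilities for $Z$ in Lemma~\ref{lemma: minimal} and using Lemma~\ref{lemma: Gorenstein} for the normalization of an elliptic $m$-fold point, together with the observation that branches of $Z$ meeting at a common point must take equal values there, a connectedness argument of the same type as the one in Reduction (b) of Section~\ref{subsection: properness} shows $H^0(Z,L^\vee|_Z) = 0$. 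For (ii) and (iii): each $R_i$ is a connected nodal tree of $\PP^1$'s on which $L$ has non-negative degree. An induction on the number of components of $R_i$, in the spirit of Lemma~\ref{lemma: smoothable rational trees}(a) — peeling off the component of $R_i$ containing $q_i$ and recursing on the subtrees rooted at its remaining nodes — shows simultaneously that $H^1(R_i,L|_{R_i}) = 0$ and that the evaluation $H^0(R_i,L|_{R_i}) \to k_{q_i}$ is surjective; the latter at once implies (iii), since the $i$-th coordinate can be prescribed by a section supported on $R_i$. Combining (i)--(iii) with the long exact sequence forces $H^1(C,L) = 0$, and Lemma~\ref{lemma: smoothable} then completes the proof.

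The main obstacle I anticipate is claim (i). In the special case $C = Z$ (treated in Section~\ref{subsection: properness} under the hypothesis $\omega_C \cong \sO_C$), $\mu_0$ is non-constant on every component of $C$, so the connectedness argument is immediate. Here, however, $\mu_0$ may be constant on some — but, crucially, not all — irreducible components of $Z$, so that $L^\vee|_Z$ can be trivial on several components of a reducible $Z$; ruling out nonzero global sections then genuinely requires the classification of $Z$ (ring of $\PP^1$'s versus elliptic $m$-fold point) and the gluing conditions at the singular point of $Z$. By contrast, (ii)--(iii) are routine bookkeeping: a rational tail always contributes non-negative degree, so it can never create $H^1$.
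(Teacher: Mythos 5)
Your proof is correct and follows essentially the same route as the paper: both reduce, via Lemma~\ref{lemma: smoothable} and Lemma~\ref{lemma: Gorenstein is smoothable}, to showing $H^1(C,L)=0$, and both compute this using the fundamental decomposition, a tree induction on the $R_i$ in the spirit of Lemma~\ref{lemma: smoothable rational trees}(a), and the strict negativity of $L^\vee$ somewhere on $Z$ coming from non-constancy of $\mu_0$. The only difference is bookkeeping: the paper applies Serre duality once on all of $C$ (using that $C$ is Gorenstein) and kills sections of $\omega_C\otimes L^\vee$ piece by piece, which sidesteps your surjectivity claim (iii), whereas your partial-normalization sequence with duality applied only on $Z$ reaches the same conclusion.
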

\begin{proof}
Let $L = \mu_0^* \sO_X(1)$.  By Lemma \ref{lemma: Gorenstein is smoothable}, $C$ is smoothable.  So by Lemma \ref{lemma: smoothable} and Serre duality, it suffices to show that
\[
H^0(C, \omega_C \otimes L^\vee) = 0.
\]
Write out the fundamental decomposition
\[
C = Z \cup R_1 \cup \ldots \cup R_k
\]
\noindent and let $p_i = Z \cap R_i$.  Then
\[
\omega_C|_{R_i} \cong \omega_{R_i}(p_i),
\]
\noindent so by Lemma \ref{lemma: smoothable rational trees}(a), any section of $\omega_C$ must vanish on the $R_i$.  In particular, any section $s$ of $\omega_C \otimes L^\vee$ must vanish on the $R_i$ as well.  Since $\mu_0$ is non-constant on $Z$, we have
\[
\deg ((\omega_C \otimes L^\vee)|_Z) < k,
\]
\noindent so since $s|_Z$ vanishes at the points $p_1, \ldots, p_k$, $s|_Z$ must be identically zero.  Thus $s$ is identically zero.
\end{proof}

\noindent We next specialize to the case where $X = \PP^r$ and $\mu_0$ is $m$-stable.

\begin{lemma}
Let $C$ be a Gorenstein curve of arithmetic genus one, and let $Z$ be the minimal elliptic subcurve of $C$.  Let $\mu_0: C \ra \PP^r$ be an $m$-stable map, and assume that $\mu_0$ is constant on $Z$.  Let $E \subset C$ be the maximal connected genus one subcurve on which $\mu_0$ is constant, and let $E \cap \ol{C \backslash E} = \{ q_1, \ldots, q_l \}$.  Assume that the tangent vectors $v_i := {\mu_0}_*(T_{q_i} \ol{C \backslash E} )$ are linearly dependent in $T_{\mu_0(E)}(\PP^r)$.  Then $\mu_0$ is smoothable.
\label{lemma: smoothing the elliptic m-fold point}
\end{lemma}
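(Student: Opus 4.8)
The plan is to show that $\mu_0$ lies in the principal component of $\sMbar_{1,n}^m(\PP^r,d)$; since smoothability is exactly membership in that (closed) component, it suffices to exhibit a one-parameter family over $\Delta$ with central fiber $(C,\mu_0)$ whose general fiber is a map that is \emph{non-constant} on its minimal elliptic subcurve, and then invoke Lemma~\ref{main component}. First I would put the data in normal form: a genus count shows that each connected component $C'_i$ of $\ol{C\backslash E}$ is a rational tree meeting $E$ in the single node $q_i$, and maximality of $E$ forces $\mu_0$ to be non-constant on the component of $C'_i$ through $q_i$. Write $x_0=\mu_0(E)$ and fix a non-trivial relation $\sum_i a_i v_i=0$ guaranteed by the hypothesis.

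Next I would build the family by smoothing $E$ (possible by Lemma~\ref{lemma: Gorenstein is smoothable}; if $E$ carries an elliptic $l$-fold point, by Lemma~\ref{lemma: elliptic m-fold point is smoothable}) together with the nodes $q_1,\dots,q_l$, the smoothing parameters at the $q_i$ being calibrated by the coefficients $a_i$. The key point is that, although the constant map $\mu_0|_E$ has no non-constant first-order deformation on $E$ alone (its deformations are $H^0(E,\sO_E^{\oplus r})$, merely translating $x_0$), it does extend over such a family once the nodes are opened: the obstruction to extending $\mu_0$ lives in $H^1(C,\mu_0^*\sO(1))\cong H^1(E,\sO_E)\cong k$ (the rational tails contribute nothing, since $\mu_0^*\sO(1)$ is non-negative there), and the pairing of this one-dimensional obstruction with the chosen smoothing direction is a leading-order, residue-type expression in the $v_i$ that vanishes precisely because $\sum_i a_i v_i=0$. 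One then checks that on the smoothed elliptic part of the general fiber the resulting map is non-constant, so Lemma~\ref{main component} applies and $\mu_0$ is a limit of smoothable maps; the rational tails $C'_i$ obstruct nothing by Lemma~\ref{lemma: smoothable rational trees}. An alternative packaging is to run the argument of Lemma~\ref{lemma: smoothable} directly on the general fiber, where $H^1$ of the pullback of $\sO(1)$ has become zero.

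The main obstacle will be the explicit regluing at the nodes $q_i$: keeping the total space and the map flat over $\Delta$, identifying the obstruction pairing precisely with the relation $\sum_i a_i v_i$, and, most delicately, handling those indices $i$ for which $a_i=0$ or $v_i=0$ (i.e.\ $\mu_0$ is ramified along $C'_i$ at $q_i$). Such a node must still be smoothed for the general fiber to be smooth, so its contribution has to be absorbed by higher-order terms rather than by the leading relation; the order in which the nodes are opened, and the choice of higher-order smoothing data, must be arranged so that the procedure genuinely terminates with a map from a smooth curve on which $\mu$ is non-constant. Once this bookkeeping is carried out the statement follows at once.
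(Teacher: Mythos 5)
Your strategy is not the one the paper uses, and as written it has a genuine gap at its center. The paper's proof is two lines: since $\mu_0$ is constant on $Z$, one first smooths $Z$ inside a family on which the map remains constant (Lemma \ref{lemma: Gorenstein is smoothable}), reducing to the case where $C$ is nodal of compact type with a smooth contracted elliptic core; the statement is then the classical smoothing criterion for a contracted elliptic subcurve, proved by limit linear series (cf.\ \cite{VZ1} and \cite[Lemma 5.9]{V}). That argument actually produces the required family; yours, so far, only produces a first-order deformation.

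Concretely: your identification of the obstruction is the right first-order mechanism, but two things are off. First, the relevant obstruction group is $H^1(C,\mu_0^*T_{\PP^r})\cong T_{\mu_0(E)}\PP^r\otimes H^1(E,\sO_E)$, which is $r$-dimensional, not the one-dimensional $H^1(C,\mu_0^*\sO(1))$; the connecting map $\Def(C)\to H^1(C,\mu_0^*T_{\PP^r})$ sends the node-smoothing direction $(a_1,\dots,a_l)$ to (a multiple of) $\sum_i a_i v_i$, so the hypothesis only guarantees that this direction lifts to a \emph{first-order} deformation of the pair $(C,\mu_0)$. Second, and decisively, passing from an unobstructed first-order deformation to an actual arc $\Delta\to\Def(\mu_0)$ is exactly where the difficulty lies: $\Obs(\mu_0)\neq 0$ here, the point $[\mu_0]$ sits on the intersection of the principal component with a boundary component, and the deformation space is singular there, so one cannot integrate a tangent vector formally; all higher obstructions must be controlled. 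Your ``main obstacle'' paragraph concedes precisely this (the higher-order terms, and the indices with $a_i=0$ or $v_i=0$) without supplying a mechanism to resolve it, and the proposed ``alternative packaging'' is circular, since applying Lemma \ref{lemma: smoothable} on the general fiber presupposes the family of maps whose existence is the assertion. To close the gap you need either the limit-linear-series construction the paper invokes, or an explicit construction of the total family --- domain, line bundle, and a spanning space of sections extending those of $\mu_0$, in the spirit of Lemmas \ref{lemma: smoothable} and \ref{extra components} --- where the relation $\sum_i a_i v_i=0$ is used to show the limiting linear system on $E$ is base-point free of the correct rank.
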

\begin{proof}
By Lemma \ref{lemma: Gorenstein is smoothable}, $Z$ is smoothable.  Therefore, since $\mu_0$ is constant on $Z$, we can reduce to the case where $Z$ is a smooth genus one curve.  The curve $C$ is now nodal and of compact type, so the lemma follows from a standard argument using the theory of limit linear series (see also \cite{VZ1}).
\end{proof}

\noindent We can now describe the elements of the principal component of $\sMbar_{1,0}^m(\PP^r,d)$:

\begin{lemma}
An $m$-stable map $\mu_0: C \ra \PP^r$ is smoothable if and only if one of the following two conditions holds:
\begin{enumerate}
	\item $\mu_0$ is non-constant on the minimal elliptic subcurve of $C$.
	\item Assume $\mu_0$ is constant on the minimal elliptic subcurve of $C$, let $E \subset C$ be the maximal connected genus one subcurve on which $\mu_0$ is constant, and let $E \cap \ol{C \backslash E} = \{ q_1, \ldots, q_l \}$.  Then the tangent vectors $v_i := {\mu_0}_*(T_{q_i} \ol{C \backslash E} )$ are linearly dependent in $T_{\mu_0(E)}(\PP^r)$.
\end{enumerate}
\label{lemma: main component description}
\end{lemma}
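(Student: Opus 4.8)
The ``if'' direction requires nothing new: if $\mu_0$ is non-constant on the minimal elliptic subcurve it is smoothable by Lemma \ref{main component}, and if $\mu_0$ is constant on the minimal elliptic subcurve and the $v_i$ are linearly dependent it is smoothable by Lemma \ref{lemma: smoothing the elliptic m-fold point}. So the content is the ``only if'' direction, and the only case remaining is the following: assuming $\mu_0$ is constant on the minimal elliptic subcurve $Z$ (so that the vectors $v_i = {\mu_0}_*(T_{q_i}\ol{C\backslash E})$, $i=1,\dots,l$, are defined, where $E$ is the maximal connected genus one subcurve on which $\mu_0$ is constant and $E \cap \ol{C\backslash E} = \{q_1,\dots,q_l\}$) and assuming $\mu_0$ is smoothable, show that $v_1,\dots,v_l$ are linearly dependent in $T_{\mu_0(E)}(\PP^r)$.

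The plan is to convert a smoothing of $\mu_0$ into a smoothing of a map from a curve carrying an \emph{elliptic $l$-fold point} whose $l$ branch directions are exactly the $v_i$, and then conclude by a local linear-algebra computation. Let $(\sC \ra \Delta, \mu)$ be a smoothing of $\mu_0$. After a finite base change and nodal reduction (Step 1 of the proof of Theorem \ref{theorem: properness}), I replace it by a smoothing with $\sC$ regular whose central fiber is $C$ with chains of $\PP^1$'s on which $\mu$ is constant inserted at some nodes; absorbing the chains adjacent to $E$ into $E$ leaves $l$ and every $v_i$ unchanged, so we keep the notation, and now every $q_i$ is a regular point of $\sC$. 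Following \cite{S1} (the contraction performed in Step 2 of the proof of Theorem \ref{theorem: properness}), take a line bundle $\sL = \omega_{\sC/\Delta}(D + \Sigma)$ with $D$ a Cartier divisor supported on $E$ and $\Sigma$ a general pullback of hyperplanes (hence disjoint from $E$), chosen so that $\sL$ has non-negative degree on every component of the central fiber, with degree-zero locus exactly $E$; here one uses $m$-stability of $\mu_0$ and $n=0$ to control the distinguished points on the constant rational components, and the choice of $D$ absorbs the rational bridges of $E$ as in the properness proof. By Lemma \ref{lemma: contraction}, $\sL$ defines a contraction $\phi: \sC \ra \sC'$ with exceptional locus $E$, with $\sC' \ra \Delta$ still having smooth generic fiber and with $p' := \phi(E)$ an elliptic $l$-fold point of the central fiber $C'$; since $\mu$ is constant on $E$ it descends to $\mu': \sC' \ra \PP^r$, so that $(\sC',\mu')$ is a smoothing of a map $\mu_0': C' \ra \PP^r$ with $\mu_0'(p') = \mu_0(E)$, whose $l$ branches at $p'$ are the images of the $q_i$-branches of $C$, with $\mu_0'$-tangent directions $v_1,\dots,v_l$.

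Now I argue locally at $p'$. If $l=1$, the branch of $C'$ at $p'$ is an ordinary cusp, normalized by the $q_1$-branch of $C$; since $\mu_0 = \mu_0' \circ \phi$ along this branch and $\phi$ has vanishing differential at $q_1$, we get $v_1 = d\mu_0'|_{p'}(0) = 0$, so $\{v_1\}$ is linearly dependent. If $l \geq 2$, let $W \subseteq T_{p'}C'$ be the span of the tangent lines $w_1,\dots,w_l$ of the $l$ branches of the elliptic $l$-fold point. Reading off Definition \ref{def: elliptic}: for $l=2$ the two branches of the ordinary tacnode share a tangent line, so $\dim W = 1$; for $l=3$ they are three concurrent lines in $\bA^2$, so $\dim W \leq 2$; and for $l \geq 4$ they are $l$ general lines through the origin in $\bA^{l-1}$, so $\dim W \leq l-1$. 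In each case $\dim W \leq l-1$. Each $v_i$ is a scalar multiple of $d\mu_0'|_{p'}(w_i)$ (the scalar coming from the differential of $\phi$ along the $i$-th branch), so $\mathrm{span}(v_1,\dots,v_l) \subseteq d\mu_0'|_{p'}(W)$, which has dimension $\leq l-1 < l$; hence $v_1,\dots,v_l$ are linearly dependent. This completes the proof.

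The step I expect to be the main obstacle is the construction in the second paragraph: choosing $\sL$ and $D$ so that the degree-zero locus is exactly $E$ (not merely the minimal elliptic subcurve $Z$) while $\sL$ retains the form $\omega_{\sC/\Delta}(D+\Sigma)$ demanded by Lemma \ref{lemma: contraction}. Contracting only $Z$ would produce branch directions at the nodes $Z \cap \ol{C\backslash Z}$, some of which may be zero (where $\mu_0$ is already constant beyond $Z$) and hence give no information about the $v_i$; so we genuinely must contract all of $E$. This is delicate precisely because $E$ may acquire rational bridges, so $\omega_E$ need not be trivial and condition (2) of Lemma \ref{lemma: contraction} is not automatic; the remedy, as in \cite{S1}, is to enlarge the contracted locus to swallow those bridges, using that $m$-stability bounds their distinguished points. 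Once this is set up, the local tangent-direction count is elementary.
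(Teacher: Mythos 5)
Your ``if'' direction is exactly the paper's, and your closing tangent-space computation is correct: the branch tangent lines of an elliptic $l$-fold point span a subspace of dimension at most $l-1$ (the $l-1$ coordinate axes plus the diagonal in $\bA^{l-1}$ for $l\ge 4$, and the low cases as you list them), so \emph{if} you could contract all of $E$ inside a smoothing to an elliptic $l$-fold point, dependence of the $v_i$ would follow. The gap is that this contraction is not available. Lemma \ref{lemma: contraction} yields an elliptic $l$-fold point only for $\sL=\omega_{\sC/\Delta}(D+\Sigma)$ with $D$ supported on $E$, $\Sigma$ disjoint from $E$, and $\sL|_E\cong\sO_E$; when $E$ strictly contains the minimal elliptic subcurve $Z$ the required $D$ need not exist. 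Concretely, let $C=Z\cup T\cup R_1\cup R_2\cup R_3$ with $Z$ smooth elliptic, $T\cong\PP^1$ a constant bridge meeting $Z$ at one node and $R_1,R_2$ at one node each, $R_3$ meeting $Z$ at one node, and $\mu_0$ non-constant on each $R_i$: this is $1$-stable with $E=Z\cup T$ and $l=3$. In a regular model with this central fiber one has $Z^2=-2$, $T^2=-3$, $Z\cdot T=1$, hence $\deg\omega_{\sC/\Delta}|_Z=2$ and $\deg\omega_{\sC/\Delta}|_T=1$, and the conditions $\deg(aZ+bT)|_Z=-2$, $\deg(aZ+bT)|_T=-1$ force $a=7/5$, so no integral $D$ exists. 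Your proposed remedy (``enlarge the contracted locus to swallow the bridges'') does not help: the only components you could add are ones on which $\mu$ is non-constant, and the iterative blow-up/contraction procedure of \cite{S1} (Step 2 of the proof of Theorem \ref{theorem: properness}) terminates precisely when a non-constant component is absorbed into the minimal elliptic subcurve --- in the example above, after a single step, with $T$ still uncontracted. So your argument is complete only in the special case $E=Z$.

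The paper's proof of the ``only if'' direction sidesteps this entirely (following \cite[Lemma 5.9]{V}): since $\mu$ is constant on $E$, a smoothing factors through the family obtained by contracting $E$ to a point; if the $v_i$ were linearly independent, the $l$ branches would be glued at an ordinary $l$-fold point with $\delta$-invariant $l-1$, so the central fiber of the contracted family would have arithmetic genus zero while the generic fiber is smooth of genus one, contradicting constancy of arithmetic genus in a flat family. This requires no control over the precise singularity type of the contracted fiber, only its $\delta$-invariant, which is exactly what independence of the $v_i$ would pin down. If you want to rescue your tangent-direction argument, you must replace Lemma \ref{lemma: contraction} by this map-induced contraction and argue that flatness forces $\delta=l$ at the glued point, hence dependent branch directions --- at which point you have reconstructed the paper's argument rather than given an alternative to it.
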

\begin{proof}
Lemmas \ref{main component} and \ref{lemma: smoothing the elliptic m-fold point} prove one direction.  For the other direction, the argument is the same as that in \cite[Lemma 5.9]{V}.  Let $(\sC,\mu)$ be a smoothing of $\mu_0$, and suppose that $\mu_0$ is constant on the minimal elliptic subcurve of $C$ and that the vectors $w_i := T_{q_i} \ol{C \backslash E}$ are linearly independent.  The map $\mu$ factors through the family $\sC'$ formed by contracting $E$; however, by our assumption, the central fiber of $\sC'$ has arithmetic genus zero while its generic fiber is smooth of genus one, a contradiction.  So the $w_i$ must be linearly dependent, and hence the $v_i = \mu_*(w_i)$ must be dependent as well.
\end{proof}

\subsection{The remaining components}

In this section, we explicitly describe the remaining irreducible components of the spaces $\sMbar_{1,n}^m(\PP^r,d)$.  In the classical case $m=0$, this recovers known results on Kontsevich's space $\sMbar_{1,n}(\PP^r,d)$ (see, e.g., \cite[Section 1.2]{VZ2}).

\subsubsection{The case $n=0$}

Let $d = d_1 + \ldots + d_k$ be any partition of $d$ into unordered positive integers.  Define
\[
U_{d_1, \ldots, d_k} \subset \sMbar_{1,0}^m(\PP^r,d)
\]
to be the locus of maps $\mu: C \ra \PP^r$ such that:
\begin{enumerate}
	\item $C$ is a smooth genus one curve $Z$ with $k$ nodally attached $\PP^1$'s $R_1, \ldots, R_k$.
	\item $\mu|_Z$ is constant (i.e.\ has degree 0) and $\mu|_{R_i}$ is a general map of degree $d_i$ from $\PP^1$ to $\PP^r$.
	\item The tangent vectors to the curves $\mu(R_1), \ldots, \mu(R_k)$ at the point $\mu(Z)$ are linearly independent in $T_{\mu(Z)}\PP^r$.
\end{enumerate}
We claim that $U_{d_1, \ldots, d_k}$ is irreducible.  Consider the map
\[
U_{d_1, \ldots, d_k} \ra \sM_{1,k}
\]
given by $(C, \mu) \mapsto (Z, Z \cap R_1, \ldots, Z \cap R_k)$.  Each fiber of this map is an open subset of the fiber product
\[
\prod_{1 \leq i \leq k} \sMbar_{0,1}(\PP^r, d_i)
\]
over the evaluation map to $\PP^r$, and is thus irreducible, so $U_{d_1, \ldots, d_k}$ is irreducible.

Let $S_{d_1, \ldots, d_k}$ be the closure of $U_{d_1, \ldots, d_k}$ in $\sMbar_{1,0}^m(\PP^r,d)$.  Let $S_0$ be the principal component of $\sMbar_{1,0}^m(\PP^r,d)$.  We then have:

\begin{theorem}
For any positive integers $r$ and $d$ and $m \geq 0$, the irreducible components of $\sMbar_{1,0}^m(\PP^r,d)$ are $S_0$ and the $S_{d_1, \ldots, d_k}$ with $m+1 \leq k \leq \min(r,d)$.
\label{theorem: irreducible components, n=0}
\end{theorem}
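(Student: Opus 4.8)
My plan is to prove the statement in two movements: first that the union of $S_0$ and the $S_{d_1,\dots,d_k}$ with $m+1\le k\le\min(r,d)$ is all of $\sMbar_{1,0}^m(\PP^r,d)$, and then that none of these irreducible closed substacks is contained in the union of the others, so each is genuinely an irreducible component. Observe first that $U_{d_1,\dots,d_k}$ is a nonempty substack of $\sMbar_{1,0}^m(\PP^r,d)$ precisely when $m+1\le k\le\min(r,d)$: the bound $k\le d$ comes from $\sum d_i=d$ with $d_i\ge 1$, the bound $k\le r$ from needing $k$ linearly independent tangent vectors in $T_{\mu(Z)}\PP^r$, and $k\ge m+1$ from condition (2) of Definition \ref{def: m-stable map} applied to the genus one subcurve $Z$, on which $\mu$ is constant and which carries no marked points. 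Irreducibility of $U_{d_1,\dots,d_k}$, hence of $S_{d_1,\dots,d_k}$, was checked above via the fibration over $\sM_{1,k}$, and irreducibility of $S_0$ is \cite{VZ1}.

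For the covering statement, let $\mu_0\colon C\to\PP^r$ be an arbitrary $m$-stable map. If $\mu_0$ is smoothable, then the $m$-stable locus of a smoothing over $\Delta$ is open by Lemma \ref{lemma: def-open} and contains the closed point, hence equals $\Delta$, so the smoothing is a family of $m$-stable maps with generic fibre in $\sM_{1,0}(\PP^r,d)$ and $\mu_0\in S_0$. By Lemma \ref{lemma: main component description} we may thus assume $\mu_0$ is constant on the minimal elliptic subcurve; writing $E\subset C$ for the maximal connected genus one subcurve on which $\mu_0$ is constant and $\{q_1,\dots,q_l\}=E\cap\ol{C\backslash E}$, we may assume the tangent vectors $v_i:={\mu_0}_*(T_{q_i}\ol{C\backslash E})$ are linearly independent in $T_{\mu_0(E)}\PP^r$. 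Then the connected components $R_1',\dots,R_l'$ of $\ol{C\backslash E}$ are genus zero trees, each meeting $E$ in a single node, with $d_i:=\deg\mu_0|_{R_i'}\ge 1$ and $\sum d_i=d$; as above, $m+1\le l\le\min(r,d)$. I claim $\mu_0\in S_{d_1,\dots,d_l}$. To see this I would construct a one-parameter family over a DVR with central fibre $\mu_0$ and generic fibre in $U_{d_1,\dots,d_l}$: since $(E,q_1,\dots,q_l)$ is a smoothable Gorenstein genus one curve (Lemma \ref{lemma: Gorenstein is smoothable}) on which $\mu_0$ is constant, deform it to a smooth elliptic curve with $l$ distinct marked points keeping the map constant; independently, each $(R_i',q_i,\mu_0|_{R_i'})$ is a Kontsevich stable genus zero map lying in the irreducible fibre of the evaluation map $\operatorname{ev}\colon\sMbar_{0,1}(\PP^r,d_i)\to\PP^r$ over $\mu_0(E)$, so it deforms within that fibre to a general map from an irreducible $\PP^1$; gluing these deformations along the sections $q_i$ yields the desired family, whose generic member visibly lies in $U_{d_1,\dots,d_l}$. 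Applying Lemma \ref{lemma: def-open} over the DVR shows it is a family of $m$-stable maps, whence $\mu_0\in\ol{U_{d_1,\dots,d_l}}=S_{d_1,\dots,d_l}$. This gives $\sMbar_{1,0}^m(\PP^r,d)=S_0\cup\bigcup S_{d_1,\dots,d_k}$.

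Finally I would check pairwise incomparability. The nonempty open substack $\sM_{1,0}(\PP^r,d)\subset S_0$ consists of maps from smooth irreducible curves; since the tails of a map in $U_{d_1,\dots,d_k}$ carry positive degree, their flat limits under any specialization are nontrivial cycles, so every map in $S_{d_1,\dots,d_k}$ has reducible domain. Hence $S_0\not\subset S_{d_1,\dots,d_k}$, while conversely the generic member of $U_{d_1,\dots,d_k}$ is non-smoothable by Lemma \ref{lemma: main component description}, so $S_{d_1,\dots,d_k}\not\subset S_0$. For two distinct partitions, I would show that the generic member $\mu_0$ of $U_{d_1,\dots,d_k}$ -- whose domain is a smooth elliptic curve $Z$ with exactly $k$ irreducible $\PP^1$-tails of degrees $d_1,\dots,d_k$ attached at $k$ distinct points -- cannot be a flat limit of maps in $U_{d_1',\dots,d_{k'}'}$: in such a family the elliptic component persists (it cannot absorb a positive-degree tail without the limiting map becoming non-constant there), each of the $k'$ tails specializes to a connected genus zero curve meeting it in a single point, and since these specialized tails cover the $k$ pairwise disjoint $\PP^1$'s of the domain of $\mu_0$, each is exactly one $\PP^1_j$, forcing $k'=k$ and $\{d_i'\}=\{d_j\}$. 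Therefore the generic point of each listed substack lies in none of the others, so these are precisely the irreducible components. The main obstacle is the explicit degeneration in the non-smoothable case; controlling the flat limits of the tails in the incomparability argument is a secondary technical point, and the remaining ingredients (irreducibility of the $U_{d_1,\dots,d_k}$, the criterion of Lemma \ref{lemma: main component description}) are already in hand.
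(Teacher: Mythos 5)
Your proposal is correct and follows essentially the same route as the paper: the same non-emptiness bounds $m+1\le k\le\min(r,d)$, the same covering argument via Lemma~\ref{lemma: main component description} together with an explicit degeneration of a non-smoothable map into some $U_{d_1,\ldots,d_l}$ (which the paper packages as Lemma~\ref{extra components} plus density of $U_{d_1,\ldots,d_k}$ in $V_{d_1,\ldots,d_k}$), and the same specialization/degree-conservation argument to separate components with different partitions. The only organizational difference is that the paper first proves each $U_{d_1,\ldots,d_k}$ is \emph{open} in $\sMbar_{1,0}^m(\PP^r,d)$, so that its closure is automatically an irreducible component and only distinctness remains to be checked, whereas you skip openness and verify pairwise non-containment of the closures directly; the underlying argument is identical.
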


\noindent In order to prove Theorem \ref{theorem: irreducible components, n=0}, we need the following preliminary lemma:

\begin{lemma}
Let $C$ be a Gorenstein curve of arithmetic genus one, and let $Z$ be the minimal elliptic subcurve of $C$.  Let $\mu_0: C \ra X$ be a map to a projective variety $X$, and assume that $\mu_0$ is constant on $Z$.  Then there exists a family of maps
\begin{diagram}
\sC = \sZ \cup \sR_1 \cup \ldots \cup \sR_k &\rTo^{\mu}& X  \\
\dTo																				&		 			 &		\\
\Delta																			&		 			 &
\end{diagram}
\noindent with central fiber $(C, \mu_0)$ and generic fiber of the form
\[
Z_\eta \cup S_1 \cup \ldots \cup S_k
\]
\noindent with $Z_\eta$ a smooth genus one curve, $S_i \cong \PP^1$ for all $i$, and $\mu|_\sZ$ constant.
\label{extra components}
\end{lemma}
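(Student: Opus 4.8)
The plan is to smooth the genus one piece and the rational tails of $C$ separately, then glue. Write the fundamental decomposition $C = Z \cup R_1 \cup \cdots \cup R_k$ (Lemma~\ref{lemma: fundamental}), put $p_i := Z \cap R_i$, and set $x_0 := \mu_0(Z) \in X$. Since $\mu_0$ is constant on $Z$ we have $\mu_0(p_i) = x_0$ for every $i$, and this is exactly what makes the gluing possible. I will produce: (a) a smoothing $\sZ \ra \Delta$ of $Z$, with disjoint sections $\tilde p_i$ through the $p_i$, carrying the constant morphism to $x_0$; and (b) for each $i$, a family $\sR_i \ra \Delta$ with central fiber $R_i$, smooth generic fiber $\cong \PP^1$, a section $\tilde q_i$ through $p_i$, and a morphism $\mu_{\sR_i}\colon \sR_i \ra X$ extending $\mu_0|_{R_i}$ with $\mu_{\sR_i}(\tilde q_i) \equiv x_0$. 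Gluing $\sZ$ to the $\sR_i$ along $\tilde p_i = \tilde q_i$ then gives a family $\sC \ra \Delta$ with central fiber $(C,\mu_0)$, generic fiber $Z_\eta \cup S_1 \cup \cdots \cup S_k$ of the required shape, and map constant on $\sZ$.

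Part (a) is immediate from Lemma~\ref{lemma: Gorenstein is smoothable}: fix any smoothing of $Z$, extend the smooth points $p_i$ to disjoint sections (after restricting or base-changing $\Delta$), and equip the total space with the constant map $x_0$. Part (b) is the crux. I will take any smoothing $\sR_i \ra \Delta$ of the pointed nodal curve $(R_i, p_i)$ with smooth total space and generic fiber $\PP^1$ (so the section $\tilde q_i$ exists), and extend $\mu_0|_{R_i}$ following the method of Lemma~\ref{lemma: smoothable}, but \emph{pinning} $p_i$ over $x_0$. Set $L_i := (\mu_0|_{R_i})^* \sO(1)$, extend it to a line bundle $\sL_i$ on $\sR_i$, and let $\alpha_0, \dots, \alpha_r \in H^0(R_i, L_i)$ be the coordinate sections of $\mu_0|_{R_i}$ (no common zero on $R_i$); after reordering and scaling, $\alpha_0(p_i) \neq 0$ and $x_0 = [1 : c_1 : \cdots : c_r]$, so each $\alpha_a - c_a \alpha_0$ vanishes at $p_i$. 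Because $R_i$ is a tree of $\PP^1$'s and $L_i$ is nonnegative on every component, $H^1(R_i, L_i) = 0$; and by the standard vanishing criterion for line bundles on trees of $\PP^1$'s (the leaf-peeling argument used for Lemma~\ref{lemma: smoothable rational trees}(a)) one also has $H^1(R_i, L_i(-p_i)) = 0$. Cohomology and base change, after shrinking $\Delta$, then let me lift $\alpha_0$ to $\tilde\alpha_0 \in H^0(\sR_i, \sL_i)$ and each $\alpha_a - c_a \alpha_0$ to a section of $\sL_i(-\tilde q_i)$; setting $\tilde\alpha_a$ equal to this last lift plus $c_a \tilde\alpha_0$ gives extensions of the $\alpha_a$ with no common zero (shrink $\Delta$) and with $\tilde\alpha_a|_{\tilde q_i} = c_a\,\tilde\alpha_0|_{\tilde q_i}$ for all $a$. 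These define $\mu_{\sR_i}\colon \sR_i \ra \PP^r$ extending $\mu_0|_{R_i}$ with $\mu_{\sR_i}(\tilde q_i) \equiv x_0$, and, choosing the $\alpha_a$ compatibly with the equations of $X$ exactly as in Lemma~\ref{lemma: smoothable rational trees}(b), $\mu_{\sR_i}$ factors through $X$.

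The only real difficulty is the pinning in part (b): forcing $\mu_{\sR_i}(\tilde q_i)$ to be \emph{literally} the constant $x_0$, not merely to specialize to $x_0$. The mechanism is that the extra vanishing imposed along $\tilde q_i$ costs only $H^1(R_i, L_i(-p_i))$, which vanishes precisely because $R_i$ is rational; without it the attaching section would wander and the glued family would carry no morphism to $X$. (For the case $X = \PP^r$ that actually occurs later in the paper, one could instead argue more geometrically: stabilize $(R_i, p_i, \mu_0|_{R_i})$, deform it inside the fiber over $x_0$ of the evaluation map $\sMbar_{0,1}(\PP^r, d_i) \ra \PP^r$ — which is irreducible with smooth-domain maps dense — and blow up the total space to reinstate $R_i$ as the central fiber.)
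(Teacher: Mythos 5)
Your proof is correct and follows the same strategy as the paper's: take the fundamental decomposition $C = Z \cup R_1 \cup \cdots \cup R_k$, smooth $Z$ (via Lemma \ref{lemma: Gorenstein is smoothable}) and each rational tail together with its map (via Lemma \ref{lemma: smoothable rational trees}(b)) separately, and glue along sections through the attaching nodes $p_i$. The one point where you genuinely diverge is the gluing compatibility: you pin each attaching section over $x_0 = \mu_0(Z)$ by lifting the sections $\alpha_a - c_a\alpha_0$ through $\sL_i(-\tilde q_i)$, using $H^1(R_i, L_i(-p_i)) = 0$, and then equip $\sZ$ with the literal constant map to $x_0$, whereas the paper leaves the tails unpinned and defines the map on the fiber $Z_t$ to be constant with value $\mu_i(\tau_i(t))$, the image of the moving attachment point. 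Your version is the more robust one: for $k \geq 2$ the paper's recipe implicitly requires the values $\mu_i(\tau_i(t))$ to agree for all $i$, which is not automatic from Lemma \ref{lemma: smoothable rational trees}(b), while your pinning makes that agreement hold by construction.
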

\begin{proof}
Let
\[
C = Z \cup R_1 \cup \ldots \cup R_k
\]
\noindent be the fundamental decomposition of $C$, and let $p_i = Z \cap R_i$.  For each $1 \leq i \leq k$, apply Lemma \ref{lemma: smoothable rational trees}(b) to $\mu_0|_{R_i}$ to obtain a family of maps
\begin{diagram}
\sR_i 		&\rTo^{\mu_i}& X  \\
\dTo			&		 				 &		\\
\Delta_i	&		 				 &
\end{diagram}
\noindent with generic fiber $\PP^1$ and central fiber $R_i$ such that $\mu_i|_{R_i} = \mu_0|_{R_i}$, and let $\tau_i$ be a section of $\sR_i$ through $p_i$.  Let $\pi: \sZ \ra \Delta_0$ be a smoothing of the curve $Z$, which exists by Lemma \ref{lemma: Gorenstein is smoothable}, and let $\sigma_i$ be a section of $\sZ$ through $p_i$ for each $i$.  We may assume that $\Delta_0$ and all of the $\Delta_i$ are equal after restriction to a suitable neighborhood $\Delta$.  Let $\mu_\sZ: \sZ \ra X$ be the map which is constant on each fiber $Z_t$ of $\sZ$ with value $\tau_i(t)$; precisely, $\mu_\sZ = \mu_i \circ \tau_i \circ \pi$.  Then the nodal identification of the sections $\sigma_i$ and $\tau_i$ yields the desired domain $\sC := \sZ \cup \sR_1 \cup \ldots \cup \sR_k$, and by definition the maps $\mu_\sZ$ and $\mu_i$ patch together to a map $\mu: \sC \ra X$ constant on $\sZ$.
\end{proof}

\begin{proof}[Proof of Theorem \ref{theorem: irreducible components, n=0}]
We first claim that $U_{d_1, \ldots, d_k}$ is open (possibly empty) in $\sMbar_{1,0}^m(\PP^r,d)$ for any partition of $d$.  Suppose $U_{d_1, \ldots, d_k}$ is non-empty, and consider a family of maps
\[
\mu: \sC \ra \PP^r
\]
\noindent such that the central fiber $\mu_0: C \ra \PP^r$ lies in $U_{d_1, \ldots, d_k}$.  Let $\mu_\eta: C_\eta \ra \PP^r$ denote the generic fiber.  By conditions (2) and (3) and Lemma \ref{lemma: main component description}, $\mu_0$ is not smoothable.  So by condition (1), $C_\eta$ must be an elliptic curve $Z$ with $l$ nodally attached $\PP^1$'s $Q_1, \ldots, Q_l$ for some $l$.  Each $Q_i$ must specialize to an $R_j$ on which the map has the same degree, so $l=k$ and $\mu_\eta$ has degree $d_i$ on $Q_i$.  Thus, $\mu_\eta$ satisfies conditions (1) and (2).  Assuming conditions (1) and (2), condition (3) is open, so $\mu_\eta$ satisfies condition (3) as well.  Hence, $U_{d_1, \ldots, d_k}$ is open.

By definition, $U_{d_1, \ldots, d_k}$ is empty for $k > d$; by condition (3), $U_{d_1, \ldots, d_k}$ is empty for $k > r$; and by the stability condition, $U_{d_1, \ldots, d_k}$ is empty for $k \leq m$.  It is clear that the remaining $U_{d_1, \ldots, d_k}$ are non-empty.  So for $m+1 \leq k \leq \min(r,d)$, $S_{d_1, \ldots, d_k}$ is an irreducible component of $\sMbar_{1,0}^m(\PP^r,d)$.

We next verify that the components $S_0, S_{d_1, \ldots, d_k}$ are distinct.  If two components $S_{d_1, \ldots, d_k}$ and $S_{e_1, \ldots, e_l}$ coincided, we could find a family of general elements of $S_{d_1, \ldots, d_k}$ specializing to a general element of $S_{e_1, \ldots, e_l}$.  As above, this would imply that $k=l$ and $d_i = e_i$ for all $i$.  By conditions (2) and (3) and Lemma \ref{lemma: main component description}, none of the components $S_{d_1, \ldots, d_k}$ can coincide with $S_0$.

We now show that every element $\mu: C \ra \PP^r$ of $\sMbar_{1,0}^m(\PP^r,d)$ lies in at least one of the above components.  Let $C = Z \cup R_1 \cup \ldots \cup R_k$ be the fundamental decomposition of the domain curve.  If either $\mu$ is non-constant on $Z$, or $\mu$ is constant on $Z$ and $k>r$, then $\mu$ lies in $S_0$ by Lemma \ref{lemma: main component description}.  Suppose that $\mu$ contracts $Z$ and $k \leq r$.  Define
\[
V_{d_1, \ldots, d_k}
\]
\noindent to be the locus of maps $\mu: C \ra \PP^r$ satisfying only conditions (1) and (2) above.  Note that $U_{d_1, \ldots, d_k}$ is dense in $V_{d_1, \ldots, d_k}$.  Then by Lemma \ref{extra components}, $\mu$ is the limit of a family of maps in some $V_{d_1, \ldots, d_k}$, hence of a family of maps in $U_{d_1, \ldots, d_k}$, and thus lies in $S_{d_1, \ldots, d_k}$.
\end{proof}

\subsubsection{The case $n>0$}

Let $d = d_1 + \ldots + d_k$ be any partition of $d$ into unordered positive integers, and let $0 \leq j \leq n$ be an integer.  Define
\[
U_{d_1, \ldots, d_k, j} \subset \sMbar_{1,n}^m(\PP^r,d)
\]
to be the locus of $n$-pointed $m$-stable maps $\mu: C \ra \PP^r$ satisfying conditions (1)-(3) in the prevous subsection, together with the additional condition that
\begin{enumerate}
\setcounter{enumi}{3}
	\item $j$ of the marked points lie on $Z$.
\end{enumerate}
There are ${n \choose j}$ ways to choose which marked points lie on $Z$, and $k^{n-j}$ ways to distribute the remaining marked points among the $\PP^1$'s $R_1, \ldots, R_k$.  Denote the resulting ${n \choose j} k^{n-j}$ open subsets of $U_{d_1, \ldots, d_k, j}$ by
\[
U_{d_1, \ldots, d_k, j, \alpha}, \ \ 1 \leq \alpha \leq {n \choose j} k^{n-j}.
\]
The same argument as in the case $n=0$ gives that each $U_{d_1, \ldots, d_k, j, \alpha}$ is irreducible.  However, unlike the case $n=0$, the $U_{d_1, \ldots, d_k, j, \alpha}$ need not be distinct:  indeed, if two of the $d_i$ are equal, permuting the marked points on corresponding $R_i$ does not change the map.

Let $S_{d_1, \ldots, d_k, j, \alpha}$ be the closure of $U_{d_1, \ldots, d_k, j, \alpha}$ in $\sMbar_{1,n}^m(\PP^r,d)$.  Let $S_0$ be the principal component of $\sMbar_{1,n}^m(\PP^r,d)$.  We then have:
\begin{theorem}
For any positive integers $r$ and $d$, $n \geq 0$, and $m \geq 0$, the irreducible components of $\sMbar_{1,n}^m(\PP^r,d)$, possibly with repetitions, are $S_0$ and the $S_{d_1, \ldots, d_k, j, \alpha}$ with $m+1 \leq k+j$ and $k \leq \min(r,d)$.
\label{theorem: irreducible components, n>0}
\end{theorem}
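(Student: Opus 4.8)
The plan is to follow the proof of Theorem~\ref{theorem: irreducible components, n=0} almost verbatim, carrying the $n$ marked points along and keeping track of the combinatorial datum $(j,\alpha)$ throughout; the one place that needs genuine care is the behavior of marked sections under degeneration.

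First I would check that each $U_{d_1,\ldots,d_k,j,\alpha}$ is an open (possibly empty) substack of $\sMbar_{1,n}^m(\PP^r,d)$. Given a one-parameter family $\mu\colon\sC\ra\PP^r$ whose central fiber $\mu_0$ lies in $U_{d_1,\ldots,d_k,j,\alpha}$, conditions (2) and (3) together with Lemma~\ref{lemma: main component description} show that $\mu_0$ is not smoothable, so the generic fiber has domain $Z_\eta\cup Q_1\cup\cdots\cup Q_l$ with $Z_\eta$ smooth of genus one and each $Q_i\cong\PP^1$; comparing degrees forces $l=k$ and, after reordering, $\deg\mu_\eta|_{Q_i}=d_i$. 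The point that is new relative to $n=0$ is that a marked point is a smooth point of every fiber, so its Zariski closure cannot limit onto a node, and hence a marked point cannot migrate between $Z$ and a rational tail; therefore $j$ and the distribution $\alpha$ are locally constant, and $\mu_\eta$ satisfies conditions (1), (2), (4). Since condition (3) is open once (1) and (2) hold, $\mu_\eta\in U_{d_1,\ldots,d_k,j,\alpha}$, so this locus is open; its irreducibility is the argument already recorded above (fibering over $\sM_{1,k+j}$ with fibers open in a fiber product of the spaces $\sMbar_{0,\bullet}(\PP^r,d_i)$ over the evaluation maps).

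Next I would determine which $U_{d_1,\ldots,d_k,j,\alpha}$ are nonempty. The locus is empty for $k>d$ (no such partition), for $k>r$ (condition (3) requires $k$ linearly independent vectors in $T_{\mu(Z)}\PP^r$), and for $k+j\le m$ (the minimal elliptic subcurve has exactly $k+j$ distinguished points and $m$-stability demands more than $m$); in the remaining range $m+1\le k+j$, $k\le\min(r,d)$ (with $0\le j\le n$ and $k\le d$ built into the indexing) one exhibits an element explicitly from a general elliptic curve, general degree-$d_i$ maps $\PP^1\ra\PP^r$ through a common point with linearly independent tangent directions, and any admissible marking. Hence for $m+1\le k+j$ and $k\le\min(r,d)$ the closure $S_{d_1,\ldots,d_k,j,\alpha}$ is an irreducible component. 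Pairwise distinctness of $S_0$ and the $S_{d_1,\ldots,d_k,j,\alpha}$, up to the identifications produced when equal $d_i$'s permute the marked points on the corresponding $R_i$ (which is exactly why the statement allows repetitions), follows as in the case $n=0$: a family of general members of one specializing to a general member of another forces the invariants $((d_i),j,\alpha)$ to agree up to those identifications, and by Lemma~\ref{lemma: main component description} no $S_{d_1,\ldots,d_k,j,\alpha}$ coincides with $S_0$.

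Finally I would show the list is complete. Given an $m$-stable $\mu\colon C\ra\PP^r$ with fundamental decomposition $C=Z\cup R_1\cup\cdots\cup R_k$: if $\mu$ is non-constant on $Z$, or $\mu$ is constant on $Z$ and smoothable, then $\mu\in S_0$ by Lemma~\ref{lemma: main component description}; and, exactly as in the case $n=0$, if $\mu$ is constant on $Z$ and not smoothable then $k\le r$. In that last case let $j$ be the number of marked points on $Z$ and let $\alpha$ record their distribution among $R_1,\ldots,R_k$; $m$-stability forces $k+j>m$, so $(d_1,\ldots,d_k,j,\alpha)$ is an allowed index. Letting $V_{d_1,\ldots,d_k,j,\alpha}$ be the locus cut out by conditions (1), (2), (4) only, in which $U_{d_1,\ldots,d_k,j,\alpha}$ is dense, an application of Lemma~\ref{extra components}, whose construction simply transports the marked sections, realizes $\mu$ as a limit of a family of maps in $V_{d_1,\ldots,d_k,j,\alpha}$, hence in $U_{d_1,\ldots,d_k,j,\alpha}$, so $\mu\in S_{d_1,\ldots,d_k,j,\alpha}$. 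I expect the only real obstacle beyond transcribing the $n=0$ argument to be this marked-point bookkeeping—the claim that a marked point can neither sit at a node nor move between the minimal elliptic subcurve and a rational tail—since it is precisely what yields both openness of the $U_{d_1,\ldots,d_k,j,\alpha}$ and completeness of the list.
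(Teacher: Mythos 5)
Your proposal is correct and follows essentially the same route as the paper, which simply records the emptiness conditions ($k>d$, $k>r$, $k+j\le m$) and then declares the rest identical to the $n=0$ argument; your extra care with the marked-point bookkeeping (sections land in the smooth locus of every fiber, so $j$ and $\alpha$ are locally constant) is exactly the routine verification the paper leaves implicit. Nothing further is needed.
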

\begin{proof}
By definition, $U_{d_1, \ldots, d_k}$ is empty for $k > d$; by condition (3), $U_{d_1, \ldots, d_k}$ is empty for $k > r$; and by the stability condition, $U_{d_1, \ldots, d_k, j}$ is empty for $k + j \leq m$.  It is clear that the remaining $U_{d_1, \ldots, d_k, j, \alpha}$ are non-empty.  The remainder of the proof is the same as that of Theorem \ref{theorem: irreducible components, n=0}.
\end{proof}

\begin{corollary}
For $m \geq \min(r,d) + n$, $\sMbar_{1,n}^m(\PP^r,d)$ is irreducible.
\label{eventual irreducibility, n>0}
\end{corollary}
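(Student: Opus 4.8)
The plan is to deduce this immediately from Theorem \ref{theorem: irreducible components, n>0}, which already enumerates all irreducible components of $\sMbar_{1,n}^m(\PP^r,d)$: they are the principal component $S_0$ together with the components $S_{d_1,\ldots,d_k,j,\alpha}$ indexed by partitions $d=d_1+\ldots+d_k$, an integer $0\le j\le n$, and a choice $\alpha$ of distribution of marked points, subject to the two constraints $m+1\le k+j$ and $k\le\min(r,d)$. So the only thing to verify is that when $m\ge\min(r,d)+n$ there are \emph{no} admissible tuples $(d_1,\ldots,d_k,j,\alpha)$, leaving $S_0$ as the unique component.

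First I would observe that for any admissible tuple we automatically have $j\le n$ (only $n$ marked points are available) and, by the listed constraint, $k\le\min(r,d)$; hence
\[
k+j\ \le\ \min(r,d)+n\ \le\ m\ <\ m+1.
\]
This directly contradicts the other constraint $m+1\le k+j$, so no $S_{d_1,\ldots,d_k,j,\alpha}$ occurs. Next I would note that $S_0$ is genuinely nonempty: since $r,d\ge 1$, the open substack $\sM_{1,n}(\PP^r,d)$ of maps with smooth genus one domain is nonempty (e.g. an embedded or multiple-cover image of a fixed smooth elliptic curve), and $S_0$ is by definition its closure. Therefore $\sMbar_{1,n}^m(\PP^r,d)$ is a nonempty scheme with exactly one irreducible component, i.e. it is irreducible.

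There is essentially no hard step here; the entire content was pushed into Theorem \ref{theorem: irreducible components, n>0}, and the corollary is the purely numerical observation that the defining inequalities $m+1\le k+j$ and $k+j\le\min(r,d)+n$ become incompatible once $m\ge\min(r,d)+n$. If one wanted to make the argument self-contained one could instead re-derive it from Lemma \ref{lemma: stabilization} and Lemma \ref{lemma: main component description} — every $m$-stable map with $m\ge\min(r,d)+n\ge d+n$ is non-constant on its minimal elliptic subcurve, hence smoothable, hence lies in $S_0$ — but invoking the already-proved component classification is cleaner, so that is the route I would take.
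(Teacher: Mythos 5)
Your proposal is correct and is exactly the argument the paper intends: the corollary is stated as an immediate consequence of Theorem \ref{theorem: irreducible components, n>0}, and the only content is the numerical incompatibility of $m+1 \le k+j$ with $k+j \le \min(r,d)+n \le m$, which you verify. The remark that $S_0$ is nonempty is a harmless extra check.
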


\section{Smoothness}
\label{section: smoothness}

In \cite{S2}, Smyth (indirectly) analyzes the deformation theory of elliptic $m$-fold points, and proves the following result:

\begin{lemma}
Let $C$ be a curve, and let $p \in C$ be an elliptic $m$-fold point.  Then the miniversal deformation space $\Def(p) = \Def(\sO_{C,p})$ is smooth if and only if $m \leq 5$.
\label{lemma: deformation space}
\end{lemma}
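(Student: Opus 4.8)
The plan is to read off smoothness of $\Def(p)$ from the embedding codimension of the elliptic $m$-fold point together with the structure theory of Gorenstein local rings of small codimension. First I would record the two relevant invariants of $R := \hat{\sO}_{C,p}$. It has Krull dimension $1$; and for $m\geq 4$ its $m$ branches are lines spanning $\bA^{m-1}$ in general position, so its embedding dimension is $m-1$ (for $m\leq 3$ it is a plane curve, of embedding dimension $2$). Hence the embedding codimension of $R$ is $c=m-2$ for $m\geq 4$ and $c=1$ for $m\leq 3$. Moreover $R$ is Gorenstein by Lemma \ref{lemma: Gorenstein} and smoothable by Lemma \ref{lemma: elliptic m-fold point is smoothable}.

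Next I would prove that $\Def(p)$ is smooth when $m\leq 5$, splitting by codimension. For $m\leq 3$, $R$ is a hypersurface, so $T^2_R=0$ and $\Def(p)$ is unobstructed. For $m=4$ one checks that $I_4=(x_1(x_2-x_3),\,x_2(x_1-x_3))$ --- the remaining ``obvious'' generator $x_3(x_1-x_2)$ being the difference of these two up to sign --- and that these two quadrics form a regular sequence, their common zero locus being exactly the four lines, of the expected codimension $2$; thus the elliptic $4$-fold point is a complete intersection and again $T^2_R=0$. For $m=5$, $R$ is Gorenstein of embedding codimension $3$, so by the Buchsbaum--Eisenbud structure theorem $I_5$ is generated by the $4\times 4$ Pfaffians of a $5\times 5$ alternating matrix with entries in the maximal ideal; perturbing the entries of that matrix gives a deformation of $R$ over a smooth affine base, and I would check that its Kodaira--Spencer map surjects onto $T^1_R$, so this family is versal. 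Since a smooth versal base is formally a product of $\Def(p)$ with an affine space, $\Def(p)$ is then smooth. (Conceptually: complete intersections, and codimension-$3$ Gorenstein singularities, deform within their respective classes over smooth parameter spaces.)

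For the converse I must show that when $m\geq 6$ --- so that $R$ is Gorenstein of embedding codimension $\geq 4$, where no such structure theorem is available --- the deformations of $R$ are obstructed, hence $\Def(p)$ is singular. The concrete route I would take is to write down the self-dual minimal free resolution of $R$, compute $T^1_R$ and $T^2_R$ from it, and exhibit a tangent direction $\xi\in T^1_R$ on which the quadratic part of the Kuranishi obstruction map is nonzero. Equivalently, one can try to separate two irreducible components of the versal base: the smoothing component, of dimension the Milnor number $2\delta-r+1=2m-m+1=m+1$ (with $\delta=m$ the $\delta$-invariant and $r=m$ the number of branches, using smoothability), and a component parametrizing deformations that break the elliptic $m$-fold point into several points of smaller ``multiplicity''; the existence of two distinct components already shows $\Def(p)$ is not a domain, hence not regular.

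The main obstacle is this last step, carried out uniformly in $m$: the codimension dichotomy ($\leq 3$ versus $\geq 4$) explains \emph{why} the behavior should change at $m=5/6$, but converting it into a proof requires either an explicit computation of the obstruction map for the ideal $I_m$ or a careful description of the component structure of the versal base. Verifying versality of the Pfaffian family in the case $m=5$ --- that is, that perturbing the alternating matrix really accounts for \emph{all} first-order deformations --- is the other point that needs care.
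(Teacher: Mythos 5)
First, a point of comparison: the paper does not prove this lemma at all --- it is imported verbatim from Smyth \cite{S2} --- so there is no internal proof to measure you against. That said, your ``if'' direction is essentially the standard argument (and, as far as I know, the one in \cite{S2}): the elliptic $m$-fold point is a hypersurface for $m\leq 3$, a complete intersection of two quadrics for $m=4$ (your check that $I_4=(x_1(x_2-x_3),\,x_2(x_1-x_3))$ cuts out the four lines in the expected codimension is correct), and a Gorenstein singularity of embedding codimension $3$, hence Pfaffian, for $m=5$. For $m\leq 4$ this gives $T^2_R=0$ and you are done. For $m=5$, the assertion that perturbing the entries of the alternating matrix produces a \emph{versal} family is precisely Waldi's theorem that Gorenstein codimension-$3$ singularities have smooth versal base; it is true, but it is a genuine theorem rather than a routine verification, so you should either cite it or actually carry out the comparison of $T^1_R$ with the space of matrix perturbations.

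The genuine gap is the converse, $m\geq 6$, which you flag but do not close. Two concrete problems. (a) The number $2\delta-r+1=m+1$ is the Buchweitz--Greuel Milnor number, i.e.\ the first Betti number of the Milnor fiber of a smoothing; it coincides with the dimension of the smoothing component only for \emph{planar} singularities. For a non-planar curve singularity the dimension of a smoothing component is given by Deligne's formula $3\delta-e$ with $e=\dim_k\bigl(\Theta_{\tilde C}/\Theta_C\bigr)$, and for the elliptic $m$-fold point this invariant must be computed; the planar formula cannot simply be imported. (b) Even with the correct dimensions in hand, producing a \emph{second} irreducible component requires an argument: deformations that break the elliptic $m$-fold point into milder singularities can perfectly well lie in the closure of the smoothing component --- they do for $m\leq 5$, where the base is smooth and irreducible --- so their mere existence separates nothing. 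You would need either to show that the locus of non-smoothable deformations has dimension too large to fit inside a smoothing component, or to compute $T^1_R$, $T^2_R$ and a nonvanishing quadratic obstruction directly from the self-dual resolution of $k[[x_1,\ldots,x_{m-1}]]/I_m$. The codimension dichotomy (structure theorems for Gorenstein codimension $\leq 3$, none for $\geq 4$) explains why one should \emph{expect} obstructions starting at $m=6$, but it proves nothing. As written, the proposal establishes only the ``if'' half of the lemma.
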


\begin{corollary}
\label{cor: smoothness for curves}
$\sMbar_{1,n}(m)$ is smooth if and only if $m \leq 5$.  At any point $[C]$ such that $C$ has an elliptic $l$-fold point $p$ with $l > 5$, the stack $\sMbar_{1,n}(m)$ is \etale \ locally a smooth fibration over the miniversal deformation space of $p$.
\end{corollary}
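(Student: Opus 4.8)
The plan is to reduce smoothness of $\sMbar_{1,n}(m)$ at a point $[C]$ to the local deformation theory of the singularities of $C$, combining the local-to-global principle for deformations of curves with Lemma~\ref{lemma: deformation space}. First I would observe that an $m$-stable curve $(C,p_1,\ldots,p_n)$ has no infinitesimal automorphisms, so its automorphism group scheme is finite and \'etale and an \'etale neighborhood of $[C]$ in $\sMbar_{1,n}(m)$ is the quotient of the miniversal deformation space $\Def(C,p_1,\ldots,p_n)$ by a finite group; consequently $\sMbar_{1,n}(m)$ is smooth at $[C]$ if and only if the hull of $\Def(C,p_1,\ldots,p_n)$ is regular. (The moduli problem only sees deformations preserving $m$-stability, but this condition is open, so the relevant germ is unchanged.)

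Next I would study $\Def(C,p_1,\ldots,p_n)$ through the natural transformation to $\prod_{q\in\mathrm{Sing}(C)}\Def(\hat{\sO}_{C,q})$ that records the induced deformations of the complete local rings at the singular points. Because $\dim C=1$, in the local-to-global spectral sequence computing the tangent and obstruction spaces of $\Def(C)$ all cohomology $H^p(C,-)$ with $p\ge 2$ vanishes, and $H^1(C,\mathcal{E}xt^1(L_C,\sO_C))=0$ since $\mathcal{E}xt^1(L_C,\sO_C)$ is supported at the singular points; hence every obstruction for $\Def(C)$ lies in $\bigoplus_q T^2(\hat{\sO}_{C,q})$ compatibly with the obstructions of the target, while the induced map on tangent spaces surjects onto $\bigoplus_q T^1(\hat{\sO}_{C,q})$ (the edge-map obstruction in $H^2(C,\sT_C)$ vanishing for dimension reasons). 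Adjoining the marked points changes nothing, as they lie at smooth points. It follows that this comparison map is formally smooth, and since the hulls involved are complete Noetherian local rings, the hull of $\Def(C,p_1,\ldots,p_n)$ is a formal power series ring over the hull of $\prod_q\Def(\hat{\sO}_{C,q})$ (the completed tensor product of the hulls of the factors).

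Granting this, $\sMbar_{1,n}(m)$ is smooth at $[C]$ if and only if $\Def(\hat{\sO}_{C,q})$ is smooth for every singular point $q$. A node has a smooth ($1$-dimensional) deformation space, and by Lemma~\ref{lemma: deformation space} an elliptic $l$-fold point has a smooth deformation space precisely when $l\le 5$; since condition~(1) of $m$-stability forces every elliptic point of $C$ to have $l\le m$, the stack is smooth at every point when $m\le 5$. If $m\ge 6$ (hence $n>m\ge 6$) I would exhibit an $m$-stable curve with an elliptic $6$-fold point --- for instance six smooth rational branches through an elliptic $6$-fold point, with the marked points distributed so that every branch has at least two distinguished points and one branch has three --- at which $\sMbar_{1,n}(m)$ fails to be smooth; this establishes the equivalence in the first assertion. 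For the second assertion, Lemmas~\ref{lemma: fundamental} and~\ref{lemma: minimal} show that a Gorenstein curve of arithmetic genus one has at most one elliptic $l$-fold point, so if $C$ has such a point $p$ with $l>5$ it is unique and every other singularity of $C$ is a node; splitting off the smooth factors in the comparison map above exhibits the hull of $\Def(C,p_1,\ldots,p_n)$ as a formal power series ring over $\Def(p)$, i.e.\ $\sMbar_{1,n}(m)$ is \'etale locally a smooth fibration over the miniversal deformation space of $p$.

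The step I expect to be the main obstacle is the local-to-global comparison itself: establishing formal smoothness of the transformation to $\prod_q\Def(\hat{\sO}_{C,q})$ and extracting the power-series description of the hull, the delicate point being the correct use of the cotangent complex at the elliptic $m$-fold points (which are not local complete intersections for $m\ge 4$). Once Lemma~\ref{lemma: deformation space} is granted, the remainder is bookkeeping.
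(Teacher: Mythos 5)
Your proposal is correct and follows essentially the same route as the paper: the paper's proof simply cites as well-known the smoothness of $\Def(C)\ra\Def(p)$ (and of $\Def(C,p_1,\ldots,p_n)\ra\Def(C)$, the marked points being smooth) and then invokes Lemma \ref{lemma: deformation space}, whereas you spell out the standard local-to-global cotangent-complex argument behind that well-known fact and add an explicit $m$-stable curve with an elliptic $6$-fold point to witness the ``only if'' direction. These additions are fine but do not change the substance of the argument.
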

\begin{proof}
For any complete, reduced, pointed curve $(C, p_1, \ldots, p_n)$ of finite type over an algebraically closed field $k$ and point $p \in C$, it is well-known that the map of miniversal deformation spaces
\[
\Def(C) \ra \Def(p)
\]
\noindent is smooth, and it is clear that
\[
\Def(C,p_1, \ldots, p_n) \ra \Def(C)
\]
\noindent is smooth since the marked points $p_1, \ldots, p_n$ are smooth.  The statement immediately follows.
\end{proof}

By Corollary \ref{eventual irreducibility, n>0}, the stack $\sMbar_{1,n}^m(\PP^r,d)$ is irreducible for $m \geq \min(r,d) + n$.  While this stack may be singular in general, we can say more in the ``limit'' $m \geq d+n$ (see Lemma \ref{lemma: stabilization}):

\begin{theorem}
\label{theorem: smoothness}
The stack $\sMbar_{1,n}^{d+n}(\PP^r,d)$ is smooth if $d+n \leq 5$.
\end{theorem}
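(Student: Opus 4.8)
The plan is to realize $\sMbar_{1,n}^{d+n}(\PP^r,d)$ as smooth by factoring its structure morphism through the Artin stack of genus one curves with mild singularities and showing that both factors are smooth. I will work over an algebraically closed field $k$ with $\ch k \neq 2,3$ and verify smoothness of the geometric fibers; the same deformation‑theoretic computation, carried out relative to $\Spec\ZZ[1/6]$, gives the integral statement.

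First I would invoke Lemma \ref{lemma: stabilization}: for $m=d+n$, every $m$-stable map $\mu\colon C\to\PP^r$ in $\sMbar_{1,n}^{d+n}(\PP^r,d)$ has a domain $C$ whose only singularities are nodes and elliptic $l$-fold points with $l\le d+n\le 5$, and satisfies the property that $\mu$ is non-constant on the minimal elliptic subcurve $Z\subset C$. Let $\mathfrak{M}$ be the Artin stack of $n$-pointed genus one curves whose singularities are nodes and elliptic $l$-fold points with $l\le 5$; forgetting $\mu$ defines a morphism $\Phi\colon \sMbar_{1,n}^{d+n}(\PP^r,d)\to\mathfrak{M}$, which is well defined by the above.

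Next I would check that $\mathfrak{M}$ is smooth, which is exactly (the proof of) Corollary \ref{cor: smoothness for curves}: the miniversal deformation space of $(C,p_1,\dots,p_n)$ maps smoothly onto the product of the $\Def(\sO_{C,p})$ over the singular points $p$, each factor being smooth — trivially for a node, and by Lemma \ref{lemma: deformation space} for an elliptic $l$-fold point with $l\le 5$ — while the smooth marked points contribute further smooth parameters. Then I would check that $\Phi$ is smooth. Its relative deformation theory at $(C,p_1,\dots,p_n,\mu)$ — equivalently, the deformation theory of the morphism $\mu$ with source curve and marked points held fixed — is the classical Grothendieck deformation theory of the scheme of morphisms into the smooth target $\PP^r$, so the relative tangent and obstruction spaces are $H^0(C,\mu^*T_{\PP^r})$ and $H^1(C,\mu^*T_{\PP^r})$. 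It therefore suffices to show $H^1(C,\mu^*T_{\PP^r})=0$. Writing $L=\mu^*\sO_{\PP^r}(1)$: since $C$ is Gorenstein (its singularities being nodes and elliptic $l$-fold points) and $\mu$ is non-constant on $Z$, the computation in the proof of Lemma \ref{main component} gives $H^0(C,\omega_C\otimes L^\vee)=0$, hence $H^1(C,L)=0$ by Serre duality. Pulling back the Euler sequence
\[
0 \to \sO_C \to L^{\oplus(r+1)} \to \mu^*T_{\PP^r} \to 0
\]
and using $H^1(C,L)=0$ and $H^2(C,\sO_C)=0$ (as $C$ is a curve) yields $H^1(C,\mu^*T_{\PP^r})=0$. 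Thus $\Phi$ is smooth; being a composition of smooth morphisms, $\sMbar_{1,n}^{d+n}(\PP^r,d)\to\Spec k$ is smooth.

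The step I expect to be the crux is the vanishing $H^1(C,\mu^*T_{\PP^r})=0$, and the point worth stressing is that it genuinely uses the stabilization hypothesis $m\ge d+n$: only then does Lemma \ref{lemma: stabilization} force $\mu$ to be non-constant on the minimal elliptic subcurve, which is precisely what makes $H^0(C,\omega_C\otimes L^\vee)$ vanish. On the extra components $S_{d_1,\dots,d_k}$ of the smaller spaces $\sMbar_{1,n}^m(\PP^r,d)$ (Theorem \ref{theorem: irreducible components, n=0}) the map contracts $Z$, this cohomology group is in general nonzero, and smoothness indeed fails there. The remaining work is bookkeeping: verifying that $\Phi$ lands in a genuinely smooth Artin stack of curves — which is where the bound $l\le 5$ enters through Lemma \ref{lemma: deformation space} — and checking that the fibers of $\Phi$ really are open substacks of schemes of morphisms $C\to\PP^r$, using that $m$-stability of the map is an open condition and that it imposes no constraint on the images of the marked points.
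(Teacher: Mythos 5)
Your proof is correct and is essentially the paper's argument: the paper phrases it via the tangent--obstruction exact sequence $\Obs_R(\mu) \to \Obs(\mu) \to \Obs(C) \to 0$ (after reducing to $n=0$) rather than via a factorization through the Artin stack of curves, but both versions reduce to the same two vanishings, namely $\Obs(C)=0$ from Lemma \ref{lemma: deformation space} and $H^1(C,\mu^*T_{\PP^r})=0$ obtained from Lemma \ref{lemma: stabilization}, the pulled-back Euler sequence, and the cohomology computation in the proof of Lemma \ref{main component}.
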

\begin{proof}
Suppose $d+n \leq 5$, and let $\mu: C \ra \PP^r$ be an $(d+n)$-stable map.  We can obviously reduce to the case $n=0$ since the marked points are smooth.  To show that $\sMbar_{1,0}^d(\PP^r,d)$ is smooth at $[\mu]$, it suffices to show that the obstruction space $\Obs(\mu)$ vanishes.  Let $\Obs_R(\mu)$ denote the obstruction space for \emph{rigid} deformations of $\mu$; that is, deformations of $\mu$ which leave both $C$ and $X$ fixed.  Consider the end of the tangent-obstruction exact sequence
\[
\Obs_R(\mu) \ra \Obs(\mu) \ra \Obs(C) \ra 0.
\]
\noindent Since $d+n \leq 5$, Lemma \ref{lemma: deformation space} gives $\Obs(C)=0$, so it suffices to show that $\Obs_R(\mu)=0$.  It is well-known that $\Obs_R(\mu) = H^1(C, \mu^* T_{\PP^r})$.  Pulling back the Euler sequence on $\PP^r$ via $\mu$ and taking cohomology yields a surjection
\[
H^1(C, \mu^* \sO_{\PP^r}(1))^{r+1} \ra H^1(C, \mu^* T_{\PP^r}) \ra 0.
\]
By Lemma \ref{lemma: stabilization}, $\mu$ is non-constant on the minimal elliptic subcurve of $C$, and thus $H^1(C, \mu^* \sO_{\PP^r}(1)) = 0$ by the proof of Lemma \ref{main component}.  So $\Obs_R(\mu) = 0$, and thus $\Obs(\mu)=0$.
\end{proof}

\section{An example: plane cubics}
\label{section: plane cubics}
The spaces $\sMbar_{1,0}^m(\PP^2,3)$ give an illuminating illustration of how the spaces $\sMbar_{1,n}^m(X,\beta)$ ``evolve'' as $m$ increases.  We describe these spaces in this section, starting with the classical case $m=0$, which is one of the well-understood Kontsevich spaces in genus greater than zero.  \\

\noindent $\mathbf{\underline{m=0.}}$ (Figure \ref{figure: m=0})
We recount some well-known facts about the Kontsevich space $\sMbar_{1,0}(\PP^2,3)$.  The space $\sMbar_{1,0}(\PP^2, 3)$ has three irreducible components $S_0, S_1, S_2$ of dimensions 9, 10, 9 which are the closures of the following loci $U_0, U_1, U_2$:
\begin{enumerate}
	\item[($U_0$)] Maps $\mu: Z \ra \PP^2$ from a smooth genus one curve with image a smooth plane cubic,
	\item[($U_1$)] Maps $\mu: Z \cup R \ra \PP^2$ from a smooth genus one curve with a rational tail such that $\mu(Z)$ is a point and $\mu(R)$ is a nodal plane cubic through that point,
	\item[($U_2$)] Maps $\mu: Z \cup R_1 \cup R_2 \ra \PP^2$ from a smooth genus one curve with two rational tails such that $\mu(Z)$ is a point and $\mu(R_1)$ and $\mu(R_2)$ are a plane conic and a line meeting transversely at that point.
\end{enumerate}
We verify the dimensions of these components by determining the dimensions of the $U_i$.  The first ``principal'' component has dimension 9 since $U_0$ is isomorphic to the open subset in $\PP^9$ of smooth plane cubics.  For $U_1$, there is a 1-dimensional space of smooth genus one curves with one marked point ($\dim \sM_{1,1} = 1$), an 8-dimensional family of nodal plane cubics, and a 1-dimensional space of possible images of $Z$, for a total of 10 dimensions.  For $U_2$, there is a 2-dimensional space of smooth genus one curves with two marked points ($\dim \sM_{1,2} = 2$), a 5-dimensional space of plane conics, and a 2-dimensional space of plane lines (and a 0-dimensional space of possible images of $Z$, namely 2 points), for a total of 9 dimensions.

We will now describe the generic element of the intersections $S_0 \cap S_1$ and $S_0 \cap S_2$, the loci which will be modified at the stages $m=1$ and $m=2$, respectively.  Consider a family of maps in $\sMbar_{1,0}^m(\PP^2,3)$
\begin{diagram}
\sC 		&\rTo^{\mu}& \PP^2  \\
\dTo		&		 			 &				\\
\Delta	&		 			 &
\end{diagram}
\noindent with generic fiber in $U_0$ and central fiber of the form $C_0 = Z \cup R$ such that $\mu_0$ is constant on $Z$ and has degree 3 on $R$.  Let $\mu$ be given by sections of a line bundle $\sL$ on $\sC$.  Observe that $h^0(\sL|_{C_t})$ jumps at $t=0$:  indeed, by Riemann-Roch, $h^0(\sL|_{C_\eta}) = 3$ but $h^0(\sL|_{C_0}) = 4$.  Extend the space of sections of $\sL|_{\sC \backslash C_0}$ to a 3-dimensional subspace $V \subset h^0(\sL|_{C_0})$.  Then we claim that the map $\mu_0: C_0 \ra \PP^2$ associated to the pair $(\sL|_{C_0}, V)$ has \emph{cuspidal} image.  Indeed, let $L'$ be a line bundle on $C_0$ with degree 3 on $Z$ and degree 0 on $R$, and let $V' = H^0(C_0, \sL')$.  Then one trivially checks that the vanishing sequence of $V'$ at $p := Z \cap R$ is given by $(0,1,3)$.  By the basic theory of limit linear series \cite{HM}, the vanishing sequence of $V$ must then be at least $(0,2,3)$, so any regular function on $\mu_0(C_0)$ vanishing at $\mu_0(p)$ must also have vanishing first derivative.  This precisely means that $\mu_0(p)$ is a cusp of $\mu_0(C_0)$, as claimed.

Conversely, any such $\mu_0: Z \cup R \ra \PP^2$ is smoothable.  To see this, let $\sC' \ra \Delta$ be a general pencil of plane cubic curves with smooth generic fiber and cuspidal central fiber.  Let $q = \mu_0(p)$.  Partial stable reduction of $\sC'$ gives a rational map
\[
\mu: \sC --> \sC'
\]
\noindent where $\sC$ is a family of genus one curves with smooth generic fiber and central fiber of the form $\tilde{Z} \cup R$ with $\tilde{Z}$ a smooth genus one curve, $R$ a rational tail, and $\tilde{Z} \cap R = \mu_0(p)$.  By choosing the pencil $\sC'$ suitably, we can ensure that $\tilde{Z}$ has the same $j$-invariant as $Z$.  Furthermore, the map $\mu$ is in fact regular:  its only possible points of indeterminancy are the centers of blow-ups which lie on $\tilde{Z}$ away from $\mu_0(p)$, and since $\mu(\tilde{Z})$ is a point, $\mu$ must be regular.  By construction $\mu|_C = \mu_0$, so $\mu$ is the desired smoothing.

Thus, the generic element of $S_0 \cap S_1$ is of the form $\mu_0$ above:  that is, a map $\mu: Z \cup R \ra \PP^2$ from a smooth genus one curve with a rational tail such that $\mu(Z)$ is a point and $\mu(R)$ is a plane cubic with a \emph{cusp} at that point.  Similarly, one can check that the generic element of $S_0 \cap S_2$ is a map $\mu: Z \cup R_1 \cup R_2 \ra \PP^2$ from a smooth genus one curve with two rational tails such that $\mu(Z)$ is a point and $\mu(R_1)$ and $\mu(R_2)$ are a plane conic and a line intersecting at a \emph{tacnode} at that point.

Finally, there is one more subset of $\sMbar_{1,0}(\PP^2,3)$ which will be modified at the stage $m=3$.  Let $T_0$ denote the locus of maps $\mu: Z \cup R_1 \cup R_2 \cup R_3 \ra \PP^2$ from a smooth genus one curve with three rational tails such that $\mu(Z)$ is a point and $\mu(R_1)$, $\mu(R_2)$, and $\mu(R_3)$ are lines meeting at a planar triple point.  This does not define another irreducible component of $\sMbar_{1,0}^m(\PP^2,3)$:  indeed, by Lemma \ref{lemma: smoothing the elliptic m-fold point}, $T_0 \subset S_0$.  \\

\noindent $\mathbf{\underline{m=1.}}$ (Figure \ref{figure: m=1})
The space $\sMbar_{1,0}^1(\PP^2,3)$ has two irreducible components, $S_0$ and $S_2$, the closures of $U_0$ and $U_2$ defined exactly as in the case $m=0$.  The subset $U_1$ has been entirely removed since 1-stability requires that $Z$ have at least 2 distinguished points.  The generic element of the intersection $S_0 \cap S_1$ is replaced by its 1-stable reduction.  This contracts $Z$ to a cuspidal point by Lemma \ref{lemma: contraction}, so the resulting maps are isomorphisms from a cuspidal genus one curve to a cuspidal plane cubic.  \\

\noindent $\mathbf{\underline{m=2.}}$ (Figure \ref{figure: m=2})
The space $\sMbar_{1,0}^2(\PP^2,3)$ is irreducible.  Similarly to the case $m=1$, the subset $U_2$ has been removed, and the generic element of the intersection $S_0 \cap S_2$ is replaced by its 2-stable reduction, an isomorphism from two $\PP^1$'s intersecting at a tacnode to a plane conic and a line intersecting at a tacnode.  \\

\noindent $\mathbf{\underline{m=3.}}$ (Figure \ref{figure: m=3})
The space $\sMbar_{1,0}^3(\PP^2,3)$ is irreducible and smooth, and is \emph{not} isomorphic to $\sMbar_{1,0}^2(\PP^2,3)$.  Indeed, each element of $T_1$ is replaced by its 3-stable reduction.  The resulting maps are isomorphisms from three $\PP^1$'s intersecting at a planar triple point to three lines in $\PP^2$ intersecting at a planar triple point.

\newpage

\setcounter{figure}{-1}

\begin{figure}[h]
  \centering
\includegraphics[width=3.5in]{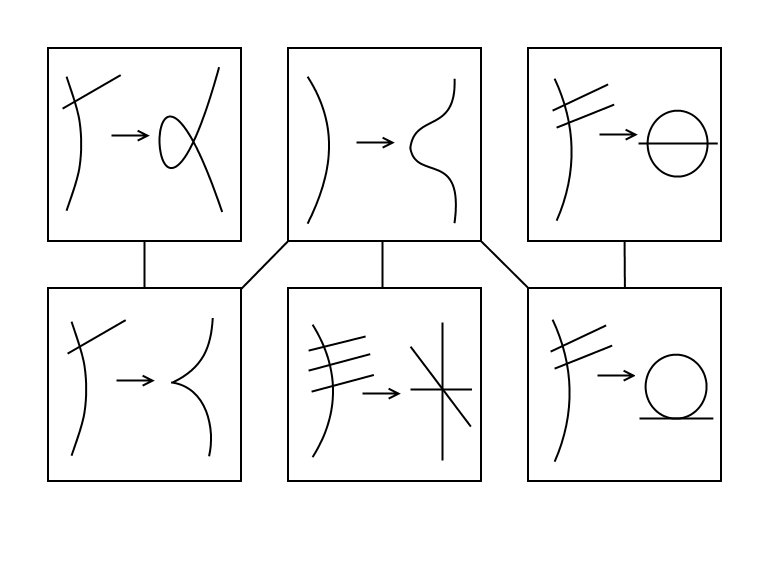}
\caption{$\sMbar_{1,0}(\PP^2,3)$.  The top row depicts the generic element of the irreducible components $S_1,S_0,S_2$.  The bottom left and bottom right depict the generic elements of $S_0 \cap S_1$ and $S_0 \cap S_2$.  The bottom middle depicts elements of the locus $T_0 \subset S_0$.  \label{figure: m=0}}
\end{figure}
\mbox{}
\begin{figure}[h!]
  \centering
\includegraphics[width=3.5in]{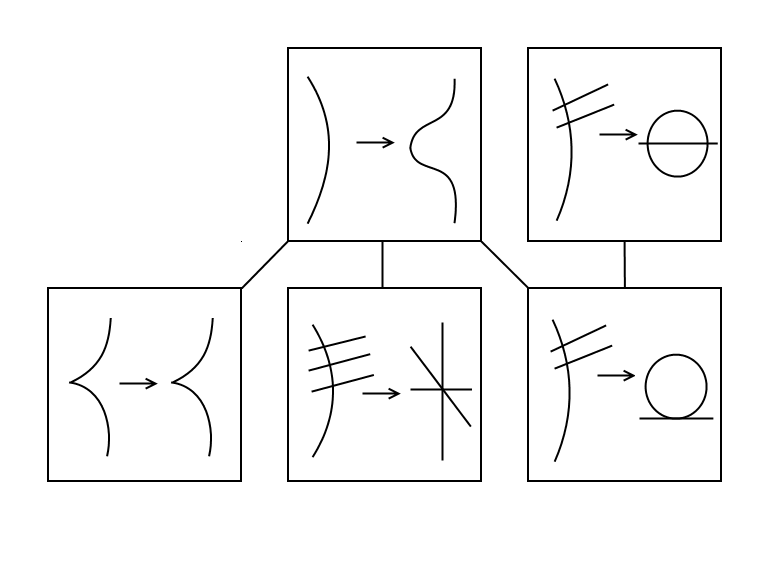}
  \caption{$\sMbar_{1,0}^1(\PP^2,3)$.  The top row depicts the generic element of the irreducible components $S_0,S_2$.  The bottom left and bottom right depict the 1-stable replacement of the generic element of $S_0 \cap S_1$ and the generic element of $S_0 \cap S_2$.  The bottom middle depicts elements of the locus $T_0 \subset S_0$.  \label{figure: m=1}}
\end{figure}
\newpage
\begin{figure}
  \centering
\includegraphics[width=3.5in]{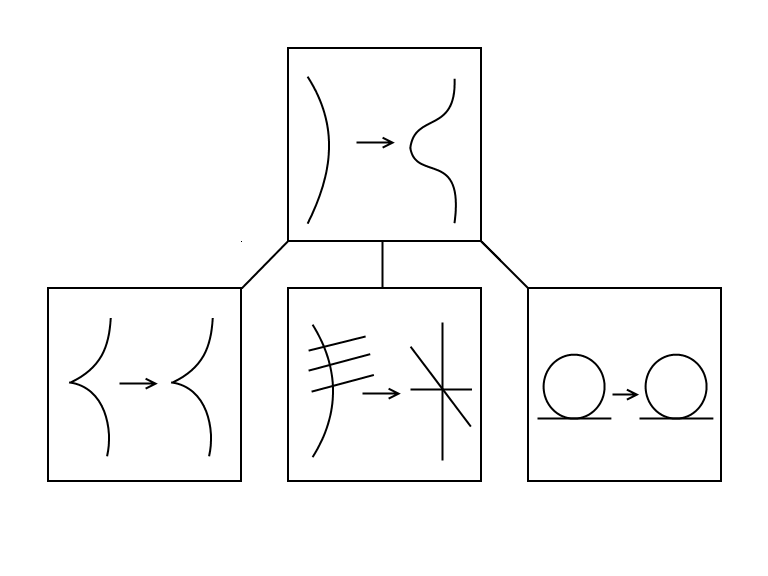}
  \caption{$\sMbar_{1,0}^2(\PP^2,3)$.  The top row depicts the generic element of the irreducible component $S_0$.  The bottom left and bottom right depict the 2-stable replacements of the generic elements of $S_0 \cap S_1$ and $S_0 \cap S_2$.  The bottom middle depicts elements of the locus $T_0 \subset S_0$.  \label{figure: m=2}}
\end{figure}
\mbox{}
\begin{figure}
  \centering
\includegraphics[width=3.5in]{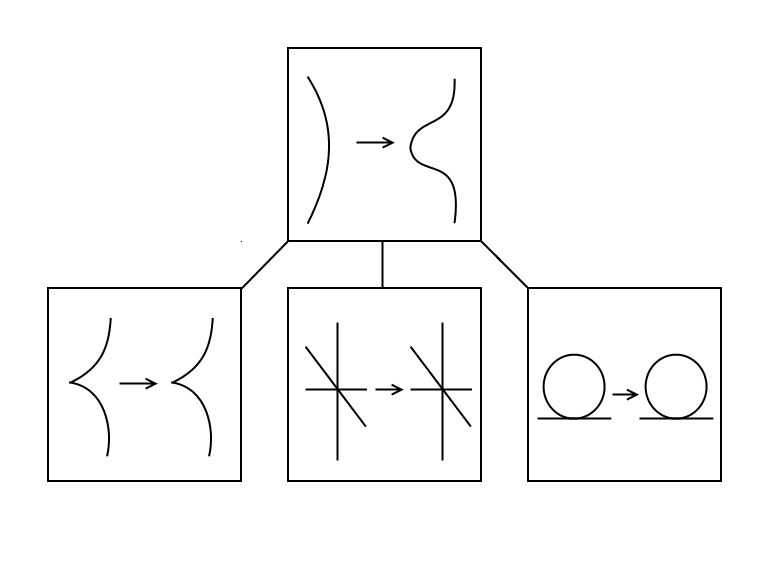}
  \caption{$\sMbar_{1,0}^3(\PP^2,3)$.  The top row depicts the generic element of the irreducible component $S_0$.  The bottom left and bottom right depict the 3-stable replacements of the generic elements of $S_0 \cap S_1$ and $S_0 \cap S_2$.  The bottom middle depicts the 3-stable replacements of elements of the locus $T_0 \subset S_0$.  \label{figure: m=3}}
\end{figure}

\newpage


\begin{thebibliography}{ABCDEF}

\scriptsize

\bibitem[AG]{AG}
V.\ Alexeev and G.\ M.\ Guy. \textit{Moduli of weighted stable maps and their gravitational descendants}, J.\ Inst.\ Math.\ Jussieu 7 (2008), no.\ 3, 425-456.

\bibitem[A]{A}
M.\ Artin. \textit{Versal deformations and algebraic stacks}, Invent.\ Math.\ 27 (1974), 165-189.

\bibitem[BaM]{BaM}
A.\ Bayer and Yu.\ Manin. \textit{Stability conditions, wall-crossing and weighted Gromov-Witten invariants}, Mosc.\ Math.\ J.\ 9 (2009), no.\ 1, 3-32.

\bibitem[BM]{BM}
K.\ Behrend and Yu.\ Manin. \textit{Stacks of stable maps and Gromov-Witten invariants}, Duke Math. J. 85 (1996), no.\ 1, 1-60.

\bibitem[DM]{DM}
P.\ Deligne and D.\ Mumford. \textit{The irreducibility of the space of curves of given genus}, Inst.\ Hautes \'{E}tudes Sci.\ Publ.\ Math.\ (1969) no.\ 36, 75-109.

\bibitem[FP]{FP}
W.\ Fulton and R.\ Pandharipande. \textit{Notes on stable maps and quantum cohomology}, Algebraic geometry - Santa Cruz 1995, 45-96, Proc.\ Sympos.\ Pure Math., 62, Part 2, Amer.\ Math.\ Soc., Providence, RI, 1997.

\bibitem[EGAIV2]{EGAIV2}
A.\ Grothendieck. \textit{\'{E}l\'{e}ments de g\'{e}ometrie alg\'{e}brique. IV. \'{E}tude locale des sch\'{e}mas et des morphismes de sch\'{e}mas. II}, Inst.\ Hautes \'{E}tudes Sci.\ Publ.\ Math.\ (1965), no.\ 24, 231.

\bibitem[HM]{HM}
J.\ Harris and I.\ Morrison. \textit{Moduli of Curves}, Springer GTM 187, 1998.

\bibitem[H]{H}
B.\ Hassett. \textit{Moduli spaces of weighted pointed stable curves}, Adv.\ Math.\ 173 (2003) no.\ 2, 316-352.

\bibitem[HH]{HH}
B.\ Hassett and D.\ Hyeon. \textit{Log canonical models for the moduli space of curves:\ the first divisorial contraction}, Trans.\ Amer.\ Math.\ Soc.\ 361 (2009) no.\ 8, 4471-4489.

\bibitem[Ki]{Ki}
B.\ Kim. \textit{Logarithmic stable maps}, arXiv:0807.3611v2.

\bibitem[KKO]{KKO}
B.\ Kim, A.\ Kresch, and Y.-G.\ Oh. \textit{A compactification of the space of maps from curves}, preprint.

\bibitem[K1]{K1}
J.\ Koll\'{a}r. \textit{Projectivity of complete moduli}, J.\ Diff.\ Geom.\ 32 (1990), 235-268.

\bibitem[K2]{K2}
J.\ Koll\'{a}r. \textit{Rational Curves on Algebraic Varieties}, Ergebnisse der Mathematik und ihrer Grenzgebiete.\ 3.\ Folge, 32.\ Springer-Verlag, Berlin, 1996.

\bibitem[Ko]{Ko}
M.\ Kontsevich. \textit{Enumeration of rational curves via torus actions}, The Moduli Space of Curves (Texel Island, 1994), 335-368, Progr.\ Math.\, 129, Birkh\"{a}user Boston, Boston, MA, 1995.

\bibitem[LMB]{LMB}
G.\ Laumon and L.\ Moret-Bailly. \textit{Champs alg\'{e}briques}, Ergebnisse der Mathematik und ihrer Grenzgebiete, 39. Springer-Verlag, Berlin, 1996.

\bibitem[MOP]{MOP}
A.\ Marian, D.\ Oprea, and R.\ Pandharipande. \textit{The moduli space of stable quotients}, arXiv:0904.2992v2.

\bibitem[MFK]{MFK}
D.\ Mumford, J.\ Fogarty, F.\ Kirwan. \textit{Geometric Invariant Theory}, Ergebnisse der Mathematik und ihrer Grenzgebiete, 34. Springer-Verlag, Berlin, 1991.

\bibitem[MM]{MM}
A.\ \Mustata \ and A.\ \Mustata. \textit{The Chow ring of $\Mbar_{0,m}(\PP^n,d)$}, J.\ Reine Angew.\ Math.\ 615 (2008), 93-119.

\bibitem[Sch]{Sch}
D.\ Schubert. \textit{A new compactification of the moduli space of curves}, Compositio Math.\ 78.\ (1991), no.\ 3, 297-313.

\bibitem[S1]{S1}
D.\ Smyth. \textit{Modular compactifications of $\sM_{1,n}$ I}, arXiv:0808.0177v2.

\bibitem[S2]{S2}
D.\ Smyth. \textit{Modular compactifications of $\sM_{1,n}$ II}, preprint.

\bibitem[S3]{S3}
D.\ Smyth. \textit{Towards a classification of modular compactifications of $\sM_{g,n}$}, arXiv:0902.3690v1.

\bibitem[V]{V}
R.\ Vakil. \textit{The enumerative geometry of rational and elliptic curves in projective space}, J.\ Reine Angew.\ Math.\ 529 (2000), 101-153.

\bibitem[VZ1]{VZ1}
R.\ Vakil and A.\ Zinger. \textit{A natural smooth compactification of the space of elliptic curves in projective space}, Electron.\ Res.\ Announc.\ Amer.\ Math.\ Soc.\ 13 (2007), 53-59.

\bibitem[VZ2]{VZ2}
R.\ Vakil and A.\ Zinger. \textit{A desingularization of the main component of the moduli space of genus-one stable maps into $\PP^n$}, Geom.\ Topol.\ 12 (2008), no.\ 1, 1-95.

\end{thebibliography}
\end{document}